\let\oldmarginpar\marginpar
\renewcommand\marginpar[1]
  \newcommand{\rmap}{\longrightarrow}
\renewcommand{\d}{\mathrm d}               % differential of functions, forms, etc.
\newcommand{\Lie}{\boldsymbol{\pounds}}    % Lie derivative
\newcommand{\X}{\ensuremath{\mathfrak{X}}} % Vector fields
\newcommand{\red}{{\mathrm{red}}} % subscript to denote reduced geometric structure
\renewcommand{\top}{{\mathrm{top}}} % superscript to denote top degree classes
\newcommand{\can}{{\mathrm{can}}} % subscript to denote canonical simplectic form
\newcommand{\lin}{{\mathrm{lin}}} % subscript to denote canonical simplectic form
   \renewcommand{\a}{\alpha}
   \renewcommand{\b}{\beta}
  \newcommand{\w}{\omega}
   \newcommand{\R}{\mathbb{R}}
      \renewcommand{\S}{\mathbb{S}}
   \newcommand{\T}{\mathbb{T}}
  \newcommand{\Z}{\mathbb{Z}}
     \newcommand{\cC}{\mathcal{C}}
   \newcommand{\cD}{\mathcal{D}}
   \newcommand{\cF}{\mathcal{F}}
   \newcommand{\cG}{\mathcal{G}}
    \newcommand{\cH}{\mathcal{H}}
  \newcommand{\cN}{\mathcal{N}}
\newcommand{\G}{\mathcal{G}}            % Lie groupoid
\renewcommand{\O}{\mathcal{O}}             % Orbit of a Lie groupoid
\DeclareMathOperator{\Mon}{Mon}         % Monodromy groupoid
\renewcommand{\gg}{\mathfrak{g}}        % Lie algebra
\newcommand{\tto}{\rightrightarrows}    % Arrows of a groupoid
\DeclareMathOperator{\Ker}{Ker}           % kernel
\DeclareMathOperator{\im}{Im}           % image
\renewcommand{\Im}{\im}
\DeclareMathOperator{\Hol}{Hol}         % Holonomy
\DeclareMathOperator{\pr}{pr}      % projection
\newcommand{\action}{\curvearrowright} % action
\numberwithin{equation}{section}
\newtheorem{theorem}{Theorem}[section]
\newtheorem{lemma}[theorem]{Lemma}
\newtheorem{proposition}[theorem]{Proposition}
\newtheorem{corollary}[theorem]{Corollary}
\theoremstyle{definition}
\newtheorem{definition}[theorem]{Definition}
\newtheorem{example}[theorem]{Example}
\newtheorem{remark}[theorem]{Remark}
\begin{document}
%%%%%%%%%%%%%%%%%%%%%%%%%%%%%%%%%%%%%%%%%%
\title{Poisson Manifolds of Compact Types (PMCT 1)}

% author one information
\author{Marius Crainic}
\address{Depart. of Math., Utrecht University, 3508 TA Utrecht,
The Netherlands}
\email{m.crainic@uu.nl}

% author two information
\author{Rui Loja Fernandes}
\address{Department of Mathematics, University of Illinois at Urbana-Champaign, 1409 W. Green Street, Urbana, IL 61801 USA}
\email{ruiloja@illinois.edu}

% author three information
\author{David Mart\'inez Torres}
\address{Departamento de Matem\'atica, PUC-Rio, R. Mq. S. Vicente 225, Rio de Janeiro 22451-900, Brazil}
\email{dfmtorres@gmail.com}

\thanks{MC and DMT were partially supported by the ERC Starting Grant no. 279729.
RLF was partially supported by NSF grants DMS 13-08472 and DMS 14-05671, FCT/Portugal and the \emph{Ci\^encias Sem Fronteiras} program.}

\begin{abstract}
This is the first in a series of papers dedicated to the study of Poisson manifolds of compact types (PMCTs). This notion encompasses 
several classes of Poisson manifolds defined via properties of their symplectic integrations. In this first paper we establish some fundamental
properties and constructions of PMCTs. For instance, we show that their Poisson cohomology behaves very much like the
de Rham cohomology of a compact manifold (Hodge decomposition, non-degenerate Poincar\'e duality pairing, etc.)
and the Moser trick can be adapted to PMCTs. More important, we find unexpected connections between PMCTs and Symplectic Topology: PMCTs
are related with the theory of Lagrangian fibrations and we exhibit a construction of a nontrivial 
PMCT related to a classical question on the topology of the orbits of a free symplectic circle action. 
In subsequent papers, we will establish deep connections between PMCTs and Integral Affine Geometry,
Hamiltonian $G$-spaces, Foliation Theory, orbifolds, Lie Theory and symplectic gerbes.
\end{abstract}

\maketitle

\tableofcontents

%%%%%%%%%%%%%%%%%%%%%%%%%%%%%%%%%

%%%%%%%%%%%%%%%%%%%%%%%%
%%%%%%%%%%%%%%%%%%%%%%%%
%%%%%%%%%%%%%%%%%%%%%%%%
%%%%%%%%%%%%%%%%%%%%%%%%
\section{Introduction: a user guide to PMCTs}\label{sec:introduction}
%%%%%%%%%%%%%%%%%%%%%%%%
%%%%%%%%%%%%%%%%%%%%%%%%
%%%%%%%%%%%%%%%%%%%%%%%%
%%%%%%%%%%%%%%%%%%%%%%%%

A Poisson structure on a manifold is a far reaching generalization of the notion of a symplectic structure where one can
still define Hamiltonian dynamics. Roughly speaking, a Poisson structure is a (possibly) singular foliation
of the manifold by symplectic leaves. It is well known that the existence of a symplectic structure
on a \emph{closed} manifold is a delicate issue. On the other hand, general Poisson structures are 
extremely flexible objects: one always has the zero Poisson structure and, even locally around a point, 
different Poisson structures can have a very distinct behavior, in marked contrast with the symplectic case 
where Darboux's Theorem shows that there are no local invariants. 

There is little that can be said about a general Poisson structure, but upon imposing some restrictions, one obtains classes 
of Poisson structures with  rich geometry. For example, one can look at Poisson structures that are symplectic 
outside a codimension 1 submanifold, which include the b-symplectic (or log-symplectic) manifolds (\cite{log-sympl, b-sympl}). 
Or one can look at regular Poisson structures, i.e., symplectic foliations, for which the existence problem is wide open, even in the case of spheres of dimension greater than 5 (\cite{Mitsumatsu}).

In this series of papers we initiate the study of new classes of Poisson manifolds, that we call generically ``of compact type''. 
The name is inspired by the analogy with Lie algebras and Lie groups. One can proceed very naively and ask for the Lie algebra of 
smooth functions (endowed with the Poisson bracket) to be of compact type; the answer, of course, is almost never, since the smooth 
functions form an infinite dimensional vector space if the manifold has positive dimension. However, this question can be refined by passing to  the cotangent bundle of the Poisson manifold, which is a \emph{Lie algebroid} rather than a Lie algebra. One may then ask when this Lie algebroid integrates to a compact-like \emph{symplectic Lie groupoid}. Here, we use the expression ``compact-like'' because for a groupoid there are several possible variations of compactness. One may ask for:
\begin{itemize}
\item a compact symplectic Lie groupoid, i.e., a symplectic Lie groupoid whose space of arrows is compact;
\item a source-proper symplectic Lie groupoid, i.e., a symplectic Lie groupoid whose source map (and hence also target map) is proper;
\item a proper symplectic groupoid, i.e., a symplectic Lie groupoid whose anchor map is proper.
\end{itemize}
Moreover, just like the case of Lie algebras and Lie groups, one may ask for the cotangent Lie algebroid of the Poisson
manifold to integrate to \emph{some} Lie groupoid or rather to \emph{a source 1-connected} Lie groupoid, having one of these
properties. For example, a source-proper Poisson manifold is one whose Lie algebroid integrates to a source-proper symplectic Lie groupoid, 
while a strong source-proper
Poisson manifold is is one whose Lie algebroid integrates to a source 1-connected and source-proper symplectic Lie groupoid. 
Hence, there are 6 different, but related, classes of PCMTs and, as we will see, all of them are worth studying. 

Our study of PMCTs will exhibit many properties which show that these are very rigid objects, much like compact symplectic manifolds,
which they actually generalize. Here is a (very incomplete) list of properties of PMCTs:
\begin{enumerate}[(i)]
\item Poisson cohomology admits a Hodge decomposition and Poincar\'e duality holds;
\item there are natural operations such as fusion product, Hamiltonian quotients, gauge equivalence, etc., which preserve the PMCT nature;
\item leaves are embedded submanifolds and have finite holonomy;
\item there exist local linear models around leaves;
\item the leaf spaces are integral affine orbifolds;
\item the leafwise symplectic forms vary linearly in cohomology;
\item the symplectic volumes of the leaves define a piecewise polynomial function, relative to the integral affine structure on the orbit space;
\item the integral affine structure yields a canonical Hamiltonian invariant measure $\mu_{\textrm{Aff}}$, for which a Weyl-type integration formula holds;
\item in the $s$-proper case, there is a second canonical Hamiltonian invariant measure: the Duistermaat-Heckman measure $\mu_{\textrm{DH}}$, which is related to the affine measure $\mu_{\textrm{Aff}}$ via a polynomial formula.   
\end{enumerate}
All these properties distinguish PMCTs from general Poisson manifolds, placing them in a prominent position in Poisson Geometry. They are a manifestation of the deep connections of the theory of PMCTs with other subjects.

For example, the connection between PMCTs and Integral Affine Geometry, which will be explored fully only in the second and third papers in this series \cite{CFMb,CFMc}, turns out to be the key to properties (vi) and (vii), which are a generalization to PMCTs of the Duistermaat and Heckman result on the variation of cohomology classes of Hamiltonian reduced spaces \cite{DuHe}. Another example, also to be explored in \cite{CFMb,CFMc}, is the deep relation with the theory of compact Lie groups and Lie algebras, since fundamental results, such as the Weyl's integration formula or Weyl's covering lemma \cite{DK}, happen to be specific instances of geometric constructions for PMCTs. Likewise, we will see in a later section that the theory of quasi-Hamiltonian $G$-spaces leads to natural examples of PMCTs. However, in \cite{CFMb} we will see that not all examples of PMCTs arise in this way, and that the obstruction can be expressed via the non-triviality of a certain class associated with a \emph{symplectic gerbe}, providing yet another deep connection. There many other connections with Symplectic Geometry: for example, we will see later how PMCTs are related to the study of Lagrangian fibrations with compact connected fibers and, more generally, with  isotropic fibrations (see \cite{CFMb}); or how free quasi-Hamiltonian $\S^1$-actions on a compact symplectic manifold, whose orbits are contractible, yield PMCTs of \emph{strong type}. The existence of such actions was a longstanding problem in Symplectic Topology which was finally solved affirmatively by Kotschick in \cite{Ko}.

This first paper in the series is organized as follows. We start by providing in Section \ref{sec:def} some background
on Poisson structures, mainly with the aim of fixing notation. In Section \ref{sec:compactness} we formally define all 
6 classes of PMCTs that we study and we point out that these notions extend naturally to Dirac structures with a background. This degree of generality is needed even if one is interested only in PMCTs, and it also allows to include important examples of Dirac structures.
 In Section \ref{sec:examples} we give many basic examples of 
PMCTs which already exhibit some of their remarkable properties. Section \ref{sec:constructions} focuses on constructions
which allow to produce new examples of PMCTs out of known examples. In Section \ref{Poisson-homotopy-groups} we 
characterize Poisson manifolds of strong compact type in terms of the symplectic foliation and the variation of 
the symplectic forms on the leaves (more precisely, the monodromy groups). Section \ref{sec:basic-properties} 
discusses some basic Poisson-topological properties of the symplectic foliation of PMCTs as well as their Poisson cohomology,
the Hodge decomposition, and Poincar\'e duality. The last section of the paper shows that there is a local linear model for PMCTs
around leaves, which can be seen as a version of Moser's stability at the groupoid level.

Finally, we would like to point out that the work of Nguyen Tien Zung \cite{Zu} on proper symplectic groupoids should be considered as a precursor of the theory of PMCTs. However, Zung focus his attention on the symplectic groupoid, instead of the underlying Poisson manifold.
\newpage

%%%%%%%%%%%%%%%%%%%%%%%%
%%%%%%%%%%%%%%%%%%%%%%%%
%%%%%%%%%%%%%%%%%%%%%%%%
%%%%%%%%%%%%%%%%%%%%%%%%
\section{Background on Poisson Geometry}\label{sec:def}
%%%%%%%%%%%%%%%%%%%%%%%%
%%%%%%%%%%%%%%%%%%%%%%%%
%%%%%%%%%%%%%%%%%%%%%%%%
%%%%%%%%%%%%%%%%%%%%%%%%

%%%%%%%%%%%%%%%%%%%%%%%%
\subsection{Poisson manifolds}\label{ssec:Poiss}
%%%%%%%%%%%%%%%%%%%%%%%%

Let $(M,\{\cdot,\cdot\})$ be a Poisson manifold. Often we will specify it by giving a bivector $\pi\in \X^2(M)$, closed 
under the Schouten bracket: $[\pi,\pi]=0$. The two equivalent formulations are related to each other
via the formula
\[\pi(\d f,\d g)=\{f,g\}.\]
In our conventions, the Hamiltonian vector field of a function $h\in C^\infty(M)$ is the unique vector field $X_h\in\X(M)$ satisfying
\[ X_h(f)=\{h,f\},\quad \forall f\in C^\infty(M). \]
Therefore, $X_h=\pi^\sharp(\d f)$, where $\pi^\sharp:T^*M\to TM$ is contraction with $\pi$. 

The {\bf symplectic foliation} of $(M, \pi)$ is defined as
\[  \cF_{\pi}:= \pi^{\sharp}(T^*M)\subset TM, \]
and, hence, it is spanned at each point by Hamiltonian vector fields.
% Recall that a (regular) foliation on a manifold $M$ can be interpreted as a vector sub-bundle
% \[ \cF\subset TM\]
% of the tangent bundle, with the property that its space of sections is closed under the usual Lie bracket of vector fields. 
% If one thinks of foliations as "partitions of $M$ into leaves",
% then $\cF$ is the bundle of vectors tangent to the leaves. Conversely, Frobenius'
% theorem tells us that the leaves can be recovered from $\cF$, as the maximal (immersed) submanifolds that are tangent to $\cF$. 
Of course, $\cF_{\pi}$ may be singular (i.e., of non-constant rank).
However, it is still completely integrable: through each point $x$ in $M$ there exists a maximal connected (immersed) 
submanifold $S$
with the property that $TS= \cF_{\pi}$ at all points of $S$. 
% Using the fact that $\cF_{\pi}$ is is spanned at each point by Hamiltonian vector fields we see that,
% set-theoretically, $S_x$ consists of points in $M$ that can be reached by starting from $x$
% and moving along flows of Hamiltonian vector fields.
Each $S$ carries a canonical symplectic structure $\omega_{S}$ characterized by
\[ \omega_{S}(X_f, X_g)= \{f, g\}.\]
Accordingly, one talks about the symplectic leaves $S$ of $(M, \pi)$.

Roughly speaking, a Poisson structure can be thought of as ``a partition of $M$ into symplectic leaves'' fitting together smoothly. 
However, it is misleading to think of a Poisson structure as just the ``tangential data'' present in the symplectic
foliation. Crucial information of the Poisson structure is also encoded in the direction transverse to the leaves, and the understanding of the Poisson structure
requires a precise description of how the tangential and transverse data interact. Moreover, some of the most interesting behavior can happen
precisely at the singular leaves, i.e., the ones where the dimension of $\cF_{\pi}$ is not locally constant.

At the transverse level, we first have
the isotropy Lie algebras: for any $x\in M$ the {\bf isotropy Lie algebra} at $x$ is
\[ \gg_x(M, \pi):= \nu_{x}^{*}(S)=(T_xS)^o=(T_xM/T_xS)^* \]
with the Lie algebra structure described as follows: if $u=\d_xf, v=\d_xg$, 
for some smooth functions $f$ and $g$, then $[u, v]=\d_x\{f, g\}$.  We often write $\gg_x$ instead of 
$\gg_x(M, \pi)$, when it is clear the Poisson manifold in question. When $x$ varies in a leaf $S$, the isotropy Lie algebras
fit into a Lie algebra bundle $\gg_S\to S$ and we have a short exact sequence of vector bundles:
\begin{equation}\label{eq:liealgebrasbundlePoisson} \xymatrix{0\ar[r]& \gg_S\ar[r]& T^*_SM\ar[r]^{\pi^\sharp} & TS\ar[r] &0}.
\end{equation}

We say that $x\in M$ is {\bf regular point} for $(M, \pi)$ if there exists a neighborhood $V$ of $x$ where $\mathcal{F}_{\pi}|_U$ 
has constant dimension. Otherwise,
$x$ is called a {\bf singular point}. Note that at a regular point $x$ the isotropy Lie algebra $\gg_x$ is abelian. The subset of regular points
\[
M^{\textrm{reg}}= \{x\in M: x\ \textrm{is\ a\ regular\ point} \},\quad \mathcal{F}_{\pi}^{\textrm{reg}}= \mathcal{F}_{\pi}|_{M^{\textrm{reg}}},
\]
is an open dense subset of $M$ 
which is saturated with respect to the symplectic foliation, and $\mathcal{F}_{\pi}^{\textrm{reg}}$ is a regular foliation. 
Note, however, that the rank of $\mathcal{F}_{\pi}^{\textrm{reg}}$ may not be constant, since $M^{\textrm{reg}}$ may be disconnected.

Similar to the case of foliations, there is a the notion of {\bf linear holonomy} for Poisson manifolds. If $S$ is any 
leaf of $\cF_{\pi}$ a choice of splitting $\sigma:TS\to T^*_SM$
of (\ref{eq:liealgebrasbundlePoisson}) defines a connection on the bundle $\gg_S$ by setting:
\[ \nabla_X\d f:=-\Lie_{X_f}\sigma(X). \]
This connection restricts to a connection on the bundle of centers $Z(\gg_S)$ and the restriction
is flat and independent of the choice of splitting.  Parallel transport with respect to
this flat connection defines the linear holonomy representation:
\[
\rho: \pi_1(S,x)\to GL(Z(\gg_x)).
\]
Its image is a subgroup of $GL(Z(\gg_x))$ called the linear holonomy group of $(M, \pi)$ at $x$, and it is denoted by $\mathrm{Hol}_{x}(M, \pi)$. When 
$x$ is regular, then $Z(\gg_x)= \gg_x= \nu_{x}^{*}(S)$, and $\rho$ is just the dual of the usual linear holonomy 
at $x$ of the regular foliation $\mathcal{F}_{\pi}^{\textrm{reg}}$ 
% \[ \rho: \pi_1(S, x) \to GL(\nu_{x}) \ \ \ (S= S_x)\]
%(explicitly, this is the parallel transport induced by the Bott connection defined on the
%normal bundle of the foliation). Linear holonomy for arbitrary points
%is best defined once Lie algebroids are brought into the picture (see subsection \ref{ssec:liealg}).
% on the
% normal bundle to  
% \[ \nabla^{S}: \mathcal{X}(S)\times \Gamma(\nu_{S}) \to \Gamma(\nu_{S}) \]
% which, in turn, is the restriction to $S$ of the ""$\cF$-partial connection"
% \[ \nabla: \Gamma(\cF)\times \Gamma(\nu(\cF))\to \Gamma(\nu(\cF)), \ (V, \overline{X})\mapsto \nabla_{V}(\overline{X}):= \overline{[V, X]}.\]
% The notion of linear holonomy of $(M, \pi)$ at arbitrary points $x\in M$ will be described below,
% after Lie algebroids will be brought into the picture. 
% and will be called the linear
% holonomy group of $(M, \pi)$ at $x$. 
% For the purpose of this paper we also introduce the principal part of $(M, \pi)$ as:
% \begin{equation}\label{ref-princ-pts}
% M^{\textrm{princ}}= \{x\in M: Hol_{x}(M, \pi)= \{1\} \};\footnote{some of these definitions should now be moved to PMCTIII, where the stratifications have been moved} 
% \end{equation}
% this is the Poisson analogue of the "principal part" for proper group actions,
% at least for the class of Poisson manifolds that are of interest in this paper,
% i.e. those of "proper type" (however, this definition is not satisfactory in general since, e.g., 
% the principal part as defined above may fail to sit inside the regular part). 

Linear holonomy does not account for the symplectic structure of a leaf, but just for its topology. The {\bf monodromy
groups} are another important local invariants which incorporate symplectic information of the leaves. For PMCTs we will see that
these give rise to integral affine structures \cite{CFMb,CFMc}. The monodromy group at $x$ is an additive subgroup
\[  \cN_x \subset Z(\gg_x)\subset \gg_{x}= \nu_{x}^{*}(S) ,\]
defined as the image of a boundary map
\begin{equation}\label{monodromy-map-eq} 
\partial_x: \pi_2(S,x)\rmap Z(\gg_x)\subset \gg_{x}= \nu_{x}^{*}(S).
\end{equation}
This boundary map will be defined later (see Section \ref{ssec:liealg} and  Section \ref{Poisson-homotopy-groups}).
Intuitively, monodromy groups encode the variation of the symplectic areas 
of leafwise spheres (i.e., spheres that stay inside one leaf) along transverse directions,
and this interpretation can be
made precise at regular points  \cite{CF2}.

% for the  
% At regular points, this interpretation can be made very precise (\cite{CF1}). At an arbitrary point,  the monodromy group is defined 
%  Intuitively, 
% \partial_x: \pi_2(S_x,x)\rmap Z(\gg_x)\subset \gg_{x}^{*}= \nu_{x}^{*}(S). 
% intuitively, encode the variation of the symplectic areas of leafwise spheres. 
% This can be made precise when $x$ is regular; then $Z(\gg_x)= \gg_x= \nu_{x}^{*}(S)$ is the
% conormal bundle to the symplectic leaf and the variation of the symplectic areas can be seen as a map
% 
% Explicitly, for $\sigma: (S^2, p_N) \rmap (S_x, x)$ representing an element in $\pi_2$ (hence sending the north pole $p_N$ to $x$),
% $v\in T_xM$ representing an element in $\nu_x$, 
% \[ \partial_x(\sigma)v= \frac{d}{dt}\left(\int_{\S^2}\sigma^*_t\omega_{S_{x(t)}}\right)_{\mid
% t=0},\]
% where  $x(t)$ is a curve in $M$ with $\dot{x}(0)= v$ and $\sigma_t: (\S^2, p_N)\to (S_{x(t)}, x(t))$ is a
% smooth family with $\sigma_0= \sigma$. With this, the monodromy group $\cN_x$ at the regular point $x$ is
% the image of $\partial_x$. At non-regular points the monodromy groups are defined making use
% of the Weinstein groupoid of $(M, \pi)$ (\cite{CF1}). However, 
% as we shall discuss later on, 

\begin{remark} The monodromy map (\ref{monodromy-map-eq}) is invariant under the action of
$\pi_1(S,x)$, hence the monodromy group $\cN_x$ is a subspace of $Z(\gg_x)$ that is invariant under the linear holonomy action.
\end{remark}

\subsection{The Lie algebroid point of view}\label{ssec:liealg} The cotangent bundle  $T^*M$ of a Poisson manifold carries a Lie algebroid structure. 
The  Lie algebroid point of view makes precise the relation between Poisson Geometry and Lie theory, and many of the constructions more transparent.

Recall that a {\bf Lie algebroid} over a manifold $M$ consists of a vector bundle $A$ over $M$, a Lie bracket $[\cdot, \cdot]$ on the space $\Gamma(A)$ of sections of $A$, and a 
vector bundle map $\rho: A\to TM$, called the anchor of $A$, satisfying the Leibniz-type identity: 
\[ [\alpha, f\beta]= f [\alpha, \beta]+ \Lie_{\rho(\alpha)}(f) \beta, \quad \forall\ \alpha, \beta\in \Gamma(A), f\in C^{\infty}(M).\]
One works with $A$ much in the same way as one works with the tangent bundle of a manifold, which is in fact an example of a Lie algebroid with the identity as anchor map. At the other extreme, when the base $M$ is a point, the definition reduces to the definition of a Lie algebra. Also, any foliation $\cF$ can be seen as a Lie algebroid with injective anchor map. 

We are mostly interested in the {\bf cotangent Lie algebroid} $T^*M$ of a Poisson manifold $(M, \pi)$ \cite{We1}: the anchor is $\pi^\sharp:T^*M\to TM$ and
the Lie bracket on sections of $T^*M$, i.e., on 1-forms, is given by:
\[ [\alpha,\beta]_\pi=\Lie_{\pi^\sharp(\alpha)}\beta-\Lie_{\pi^\sharp(\beta)}\alpha-\d\pi(\alpha,\beta). \]
In particular, on exact forms it is given by:
\[ [\d f, \d g ]_\pi=\d \{f,g\}. \]
Our preliminary discussion on Poisson Geometry in Section \ref{ssec:Poiss} can be recast using cotangent Lie algebroids. Moreover,
many of the properties introduced 
there  are common to all Lie algebroids, the main exception being the symplectic structures on the leaves. To start with, for any Lie algebroid $A\to M$
one can talk about
\begin{itemize}
\item the \emph{leaves} of $A$: these are the maximal connected immersed submanifolds $S$ of $M$ with the property that $T_yS= \rho(A_y)$ for all $y\in S$,
\item the \emph{isotropy Lie algebra} $\gg_x$ of $A$ at $x\in M$: this is just the kernel of the anchor map at $x$, 
with the Lie algebra structure induced from the Lie algebroid bracket.
\end{itemize}
Here are some further illustrations of the usefulness of the Lie algebroid point of view: cohomology, paths, and linear holonomy. 

\subsubsection{Cohomology} Thinking of Lie algebroids as replacements of tangent bundles of manifolds,
all the constructions on a manifold that depend only on the tangent bundle and the Lie derivative along vector fields, 
extend to the setting of Lie algebroids and, hence, can be applied to various particular classes
(e.g., to the cotangent Lie algebroid of a Poisson manifold). A simple instance of this principle is given by de Rham cohomology. 
For any Lie algebroid $A$ over a manifold $M$ one obtains the $A$-de Rham complex
\[ \Omega^{\bullet}(A):= \Gamma(\wedge^{\bullet}A^*) ,\]
endowed with the $A$-de Rham  differential $\d_A:\Omega^{\bullet}(A)\to \Omega_{A}^{\bullet+1}(M)$ given by the standard Koszul-type formula.
The resulting cohomology, referred to as the {\bf Lie algebroid cohomology} of $A$, and denoted by $H^{\bullet}(A)$,
is a generalization of both de Rham cohomology 
(obtained when $A= TM$) and  Lie algebra cohomology (obtained when $M=\{*\}$). In the case 
of the cotangent Lie algebroid $A= T^*M$ of a Poisson manifold $(M, \pi)$, this becomes the so-called {\bf Poisson cohomology} of
$(M, \pi)$, denoted by $H^{\bullet}_{\pi}(M)$. The defining complex consists of multivector fields
\[  \Omega^{\bullet}(T^*M)=\Gamma(\wedge^{\bullet}TM)= \X^{\bullet}(M),\]
and the differential $\d_{\pi}:\X^{\bullet}(M)\to\X^{\bullet+1}(M)$, coincides with taking the Schouten bracket with $\pi$:
\[ \d_{\pi}= [\pi, \cdot ].\]

\subsubsection{Paths} Inspired by the case of foliations and leafwise paths,
one can talk about leafwise paths for any Lie algebroid $A\to M$: a pair of paths, $\gamma:I\to M$ and $a:I\to A$,
such that $a(t)\in A_{\gamma(t)}$ and
\[ \rho(a(t))= \frac{\d}{\d t} \gamma(t) .\]
Since $\gamma$ is determined by $a$, one simply says that $a$ is an {\bf A-path}. This notion allows to describe the leaves of $A$ set-theoretically: 
two points are in the same leaf iff there exists an $A$-path with base path joining them.

The notion of $A$-path comes with an appropriate notion of ``$A$-homotopy", 
and the resulting homotopy classes of $A$-paths form a groupoid analogous to the homotopy groupoid of a manifold \cite{CF1}. 
The simplest non-trivial illustration of this construction is the case of Lie algebras: the resulting groupoid is actually a group, since the base is a point, 
and it is precisely the unique 1-connected Lie group $G(\gg)$ integrating the Lie algebra $\gg$ \cite{DK}.

\subsubsection{Linear holonomy} As another illustration of the use of the Lie algebroid point of view, 
let us describe, for any Lie algebroid $A\to M$ and $x\in M$, the linear holonomy 
\begin{equation}\label{lin-hol-gpd-case} 
\rho: \pi_1(S, x)\to GL(Z(\gg_x)),
\end{equation} 
where $S$ is the leaf of $A$ through $x$ and $\gg_x$ is the isotropy Lie algebra at $x$. In short, 
the kernel of $\rho$ at points in $S$ defines a Lie algebra bundle $\gg_{S}\rightarrow S$ with fiber $\gg_x$ at $x\in S$;
the Lie bracket of $A$ induces a flat connection on $Z(\gg_S)$ and $\rho$ is just the associated parallel transport. 
In more detail, the restriction of $A$ to $S$ defines a Lie algebroid $A_{S}$ over $S$ with surjective anchor map,
fitting into a short exact sequence
\begin{equation}\label{eq:liealgebrasbundlealgebroid}
\xymatrix{0\ar[r] & \gg_S\ar[r] & A_{S}\ar[r]^{\rho_S} \ar[r] & TS \ar[r]& 0.}
\end{equation}
The Lie bracket on $A_{S}$ induces an operation 
\[ \Gamma(A_S)\times \Gamma(\gg_S)\to  \Gamma(\gg_S),\quad (\alpha, \beta)\mapsto \nabla_{\alpha}(\beta):= [\alpha, \beta]\]
which has the formal properties of flat connections. By adapting the notion of parallel transport, but using $A$-paths,
it follows that any $A_S$-path $(a, \gamma)$ induces a parallel transport map by Lie algebra homomorphisms, which, upon restricting to
centers, defines a map
% \[ hol^{A}_{a, \gamma}: \gg_{\gamma(0)}\to \gg_{\gamma(1)} \]
% Passing to the centers, it follows that the induced maps
\[ \mathrm{hol}_{\gamma}^{\lin}: Z(\gg_x)\to Z(\gg_y)\]
only depending on the homotopy class of the base path $\gamma$  connecting $x$ to $y$. This defines the {\bf linear holonomy of $A$}. It coincides with parallel transport for the (ordinary) flat connection on the bundle $Z(\gg_S)\to S$ defined by:
\[ \nabla_X\a:=[\sigma(X),\a], \]
where $\sigma:TS\to A_S$ is any splitting of (\ref{eq:liealgebrasbundlealgebroid}).
% 
% One can also proceed a bit differently and first remark that $\nabla$ together with a splitting $\sigma: TS\to A_S$ of $\rho_S$ induces a connection 
% \[ \nabla^{\sigma}: \mathcal{X}(S)\times \Gamma(\gg_S) \to \Gamma(\gg_S), \ \ \nabla^{\sigma}_{X}(\beta):= \nabla_{\sigma(X)}(\beta).\]
% Passing from $\gg_S$ to $Z(\gg_S)$, it follows imediately that $\nabla^{\sigma}$ does not depend on the choice of $\sigma$ and defines a flat connection on $Z(\gg_{S})$. 

%%%%%%%%%%%%%%%%%%%%%%%%
\subsection{Symplectic groupoids}
%%%%%%%%%%%%%%%%%%%%%%%%
Lie algebroids are the infinitesimal counterparts of Lie groupoids. Let us recall some  general notations and basic properties of Lie groupoids (see, e.g., \cite{CF4, MK, MM} for more details). We will write $\G\tto M$ to indicate that $\G$ is a {\bf Lie groupoid} over $M$, so $\G$ and $M$ are the 
manifolds of arrows and objects, respectively. The source and target maps
are denoted by $s$ and $t$, respectively. The unit at a point $x\in M$ is denoted by $1_x$. For $x\in M$ one has the 
$\G$-orbit $\O$ through $x$ (all points in $M$ connected to $x$ by some arrow in $\G$), the isotropy group 
$\G_x= s^{-1}(x)\cap t^{-1}(x)$
at $x$,
and the source-fiber (s-fiber for short) $s^{-1}(x)$. In general, $\O$ through $x$ is an 
immersed  submanifold of $M$ and $t: s^{-1}(x)\to \O$ is a principal $\G_x$-bundle over $\O$ (with the right action defined by the multiplication in $\G$).

We will soon concentrate on groupoids $\cG$ that are \textbf{source-connected} (s-connected for short), 
in the sense that all its s-fibers are connected.
These are the analogues of connected Lie groups in classical Lie theory. The analogue of passing to the identity component of a Lie group
is taking the s-connected subgroupoid $\cG^0\subset \cG$ 
that is made out of all the unit connected components of the s-fibers. 

Similarly, a groupoid $\cG$ is called \textbf{source $1$-connected} if its s-fibers are $1$-connected;
the analogue of the universal cover of a (connected) Lie group is the source-universal cover $\widetilde{\cG}$ that is made from the 
universal covers of the (connected) s-fibers of $\cG$ (see \cite{MM} for details).

Recall that any Lie groupoid $\G$ has an associated Lie algebroid $A= \textrm{Lie}(\G)$. As a vector bundle, $A$ consists of
the tangent spaces of the fibers $s^{-1}(x)$ at the units $1_x$. The anchor is given by the differential of
the target map. The Lie bracket arises by identifying $\Gamma(A)$ with the space of right-invariant vector
fields on $\cG$ tangent to the s-fibers, and using the standard Lie bracket of vector fields on $\cG$. Note that passing 
to the s-connected subgroupoid $\cG^0$ or to the source-universal cover $\widetilde{\cG}$ does not change the Lie algebroid. 

A Lie algebroid $A$ is called {\bf integrable} if it comes from a Lie groupoid. Any Lie groupoid $\G$ with
the property that $A$ is isomorphic to $\textrm{Lie}(\G)$ is called an integration of $A$. Of course, this terminology
comes from the fact that not every algebroid is integrable. However, this failure is by now quite well understood \cite{CaF,CF1,CF2}. 
Namely, there are computable obstructions that fully characterize integrability, and:
\begin{itemize}
\item if $A$ is integrable, then there is a canonical integration $\G(A)$ uniquely characterized (up to isomorphism) by the condition that
it is source $1$-connected;
\item if $A$ is integrable, then $\cG(A)$ can be built from any integrating Lie groupoid $\cG$ by passing
first to  $\cG^0$ and then to the source-universal cover;
\item if $A$ is integrable, any s-connected integration $\G$ of $A$ arises as a quotient of $\G(A)$: 
it comes with a morphism of Lie groupoids $\G(A)\rmap \G$ which is a surjective local diffeomorphism;
\item $\G(A)$ is always defined as a topological groupoid and the integrability of $A$ is equivalent to the smoothness of $\G(A)$. 
\end{itemize}
As already mentioned, $\G(A)$ arises as the analogue of the homotopy groupoid of a manifold, but using $A$-paths:
\[ \G(A)= \frac{A-\textrm{paths}}{A-\textrm{homotopies}},\]
with source/target the maps obtained by taking the end points of the base path and composition induced by concatenation (see \cite{CF1,CF4} for details).

% 
% \begin{example}
% Here are the main types of groupoids that we will be encountering. 
% \end{example}

Returning to Poisson Geometry, it is clear that the ``global counterpart" of a Poisson manifold $(M, \pi)$ is the groupoid $\G(A)$ associated to  its cotangent Lie algebroid $A= T^*M$. It will be denoted by 
\[ \Sigma(M, \pi)\tto M\]
and called the \textbf{Weinstein groupoid} of $(M, \pi)$. 

In this paper we will only be interested in the integrable case, i.e., when $\Sigma(M, \pi)$ is a Lie groupoid. 
It then follows that $\Sigma(M, \pi)$ carries a canonical symplectic structure $\Omega_{\Sigma(M,\pi)}$ compatible
with the multiplication \cite{CaF,CF1},
i.e., it is a \textbf{symplectic groupoid}, and $\pi$ can be
recovered as the unique Poisson structure on $M$ with the property that $t$ becomes a Poisson map.

The Weinstein groupoid brings together the various pieces of Poisson Geometry already discussed (and reveals a few more),
into a global object inside which they interact. 
For instance, while the symplectic leaves of $(M, \pi)$ are the orbits of $\Sigma(M, \pi)$,
the isotropy Lie algebras $\gg_x(M, \pi)$ are the Lie algebras of the isotropy Lie groups $\Sigma_x(M, \pi)$. 
The two interact inside $\Sigma(M, \pi)$ through the principal $\Sigma_{x}(M, \pi)$-bundle
over $S$ given by the $s$-fiber at $x$. This principal bundle $t: s^{-1}(x)\to S$ plays the role of ``the Poisson homotopy cover''
of the symplectic leaf. Similarly:

\begin{definition} The {\bf Poisson homotopy group} of $(M, \pi)$ at $x\in M$ is the isotropy group $\Sigma_x(M, \pi)$ at $x$ 
of the Weinstein groupoid $\Sigma(M, \pi)$.
\end{definition}

In the integrable case, $\Sigma_x(M, \pi)$  is a Lie group integrating the isotropy
Lie algebra $\gg_x= \gg_x(M, \pi)$. In general, $\Sigma_x(M, \pi)$ is always defined as a topological group and
it is an interesting local invariant of the Poisson structure. 

The Poisson homotopy group brings us  back to the monodromy groups of $(M, \pi)$ introduced in our
initial discussion on Poisson Geometry. In the integrable case, 
a definition equivalent to the one using (\ref{mon-gp-gen}) is the following:

 % ; actually, the understanding of
% the obstruction to the integrability of the Poisson manifolds starts precisely with the
% understanding of the smoothness of $\Sigma_x(M, \pi)$ \cite{CF1,CF2}.

\begin{definition} \label{def-mon-gps-int}
Given an integrable Poisson manifold $(M, \pi)$, the monodromy group of $(M, \pi)$ at $x\in M$ is defined as
\[
\cN_x= \{u\in Z(\gg_x(M, \pi )): \textrm{exp}(u)= 1\},
\]
where $\textrm{exp}:\gg_x(M, \pi )\to \Sigma_x(M, \pi)$ is the exponential map of the Poisson homotopy group at $x$.
% 
% where  $\gg_x= $ is the isotropy Lie algebra at $x$,  $\Sigma_x= \Sigma_x(M, \pi)$ is the Poisson homotopy group, and 
% 
% is the exponential map of $\Sigma_x$. 
\end{definition}

Later, we will recall more details on these groups, including their relevance to integrability (see, in particular, Section \ref{Poisson-homotopy-groups}).

\begin{remark}[On Hausdorffness]
\label{rem:Hausdorff} 
In this paper we will only consider Hausdorff Lie groupoids which are s-connected.
The general notion of Lie groupoid allows the manifold of arrows to be non-Hausdorff, although one still requires
the s-fibers and the base to be Hausdorff manifolds. The reason is that even for some very simple examples of foliations 
(or, even, of bundles of Lie algebras) the resulting integrating groupoids may be non-Hausdorff.
Because of such examples, we will have to pay extra-attention to ensure that the space of arrows of our Lie groupoids is always Hausdorff 
and, likewise, that the s-fibers are connected.
% it may seem at first, because the restrictions of Lie groupoids to their orbits are always Hausdorff
% and many of the basic constructions on Lie groupoids are, any way, performed orbitswise
% (starting with the very construction of the associated Lie algebrois,  flows of sections, etc). 
\end{remark}

%%%%%%%%%%%%%%%%%%%%%%%%
%%%%%%%%%%%%%%%%%%%%%%%%
%%%%%%%%%%%%%%%%%%%%%%%%
%%%%%%%%%%%%%%%%%%%%%%%%
\section{Compactness types in Poisson Geometry}
\label{sec:compactness}
%%%%%%%%%%%%%%%%%%%%%%%%
%%%%%%%%%%%%%%%%%%%%%%%%
%%%%%%%%%%%%%%%%%%%%%%%%
%%%%%%%%%%%%%%%%%%%%%%%%

%%%%%%%%%%%%%%%%%%%%%%%%
\subsection{Compactness types: proper, s-proper, compact}
\label{Compactness types: proper, s-proper, compact}
%%%%%%%%%%%%%%%%%%%%%%%%
The ``compactness" of a Poisson manifold, as in the case of Lie algebras, should be defined via the corresponding global object,
i.e., the associated symplectic groupoid. But when should one declare a Lie groupoid $\cG$ to be ``of compact type"? 
The most obvious condition is to simply require the space of arrows $\cG$ to be 
\textbf{compact}\footnote{As a general convention, when we say that a manifold is compact we assume also that it is Hausdorff. Likewise,
the properness of a map includes the requirement that all the spaces
involved are Hausdorff (see Remark \ref{rem:Hausdorff}).}. On the other hand, recalling the appearance of the s-fibers in passing from 
Lie groups to Lie groupoids, another natural condition is to require the source map
$s: \cG\to M$ to be proper (i.e., compactness  
of the s-fibers). In this case we will say that $\cG$ is \textbf{s-proper} (a short form of source-proper). However,
although this may not be obvious, it turns out that the true generalization of compactness from the Lie group to the Lie groupoid setting is the condition that the groupoid anchor:
\[ (s, t): \cG\to M\times M,\]
is a proper map. In this case one says that $\cG$ is \textbf{proper}. 

\begin{example}\label{ex-act-gpd} For a Lie group, interpreted as a Lie groupoid over a point,
the three notions are equivalent.  However, if $G\ltimes M$ is the Lie groupoid associated to the action of
a Lie group $G$ on a manifold $M$, then $G\ltimes M$ is:
\begin{itemize}
\item  compact iff $G$ and $M$ are compact;
\item  s-proper iff $G$ is compact;
\item  proper iff the action of $G$ on $M$ is proper. 
\end{itemize}
It is well-known that most of the fundamental properties of group actions under the various
``compactness conditions" hold under the weakest one: properness (see, e.g., \cite{DK}).
\end{example}

Another reason why compactness of $\cG$ may not always be desirable is that this property is not invariant under Morita equivalence.
On the other hand, both s-properness and properness are Morita invariant, so these later properties 
are really about the transverse geometry of the characteristic foliation. The implications of this fact in Poisson Geometry will be explored
in the future papers in this series \cite{CFMb,CFMc}.

% We can now introduce our objects of study. We say that a groupoid $\G\tto M$ is
% \begin{enumerate}
% \item \textbf{proper} if  the map $(\s,\t)\colon\G\to M\times M$ is a proper map,
% \item \textbf{s-proper} if the map $\s: \G\to M$ is a proper map,
% \item \textbf{compact} if the manifold $\G$ of arrows is compact,
% \end{enumerate}
% and, in all three cases, we require $\G$ to be Hausdorff.

We will be interested in all the types of ``compactness'' mentioned above. Generically, we will refer to them as ``$\mathcal{C}$-type", with 
\begin{equation}
\label{types}
\mathcal{C}\in \{ \textrm{proper,\ s-proper,\ compact}\}.
\end{equation}

\begin{definition}
\label{def:pmct}
Let $\mathcal{C}$ be one of the three compactness types (\ref{types}). A Poisson manifold $(M, \pi)$ is said to be 
\begin{itemize}
\item \textbf{of $\mathcal{C}$-type} if there exists an s-connected symplectic integration of $(M,\pi)$
having the property $\mathcal{C}$;
%Such a groupoid is called an $\mathcal{X}$-integration of $(M, \pi)$.
\item \textbf{of strong $\mathcal{C}$-type} if the canonical integration \[(\Sigma(M),\Omega_\Sigma):=(\Sigma(M,\pi),\Omega_{\Sigma(M,\pi)})\]
is smooth and has the property $\mathcal{C}$.
\end{itemize}
\end{definition}
We will also refer generically to the Poisson manifolds covered by Definition \ref{def:pmct} as {\bf Poisson manifolds of compact types} (PMCTs for short).

\begin{remark}[Relationship between the various $\mathcal{C}$-types] \rm \
In general,
\[ \xymatrix{
\textrm{strong compact} \ar@{=>}[r] \ar@{=>}[d] & \textrm{strong\ s-proper} \ar@{=>}[r] \ar@{=>}[d] &  \textrm{strong\ proper} \ar@{=>}[d]\\
\textrm{compact} \ar@{=>}[r] &  \textrm{s-proper} \ar@{=>}[r] &  \textrm{proper}}\]
% Moreover, it is also clear that:
% \begin{itemize}
% \item
The difference between the s-proper and proper types lies in the compactness of the symplectic leaves.
% \[ \textrm{(strong) s-proper} \Longleftrightarrow \textrm{(strong) proper and compact symplectic leaves} .\]
When all the \emph{symplectic leaves are compact}, the previous diagram becomes:
\[
\xymatrix{
\textrm{strong compact} \ar@{=>}[r] \ar@{=>}[d] & \textrm{strong\ s-proper} \ar@{=}[r] \ar@{=>}[d] &  \textrm{strong\ proper} \ar@{=>}[d]\\
\textrm{compact} \ar@{=>}[r] &  \textrm{s-proper} \ar@{=}[r] &  \textrm{proper}}
\]
Similarly, the difference between compact type and s-proper (or proper) type lies in the compactness of $M$. When \emph{$M$ is a compact 
manifold}, the diagram becomes:
\[
\xymatrix{
\textrm{strong compact} \ar@{=}[r] \ar@{=>}[d] & \textrm{strong\ s-proper} \ar@{=}[r] \ar@{=>}[d] &  \textrm{strong\ proper} \ar@{=>}[d]\\
\textrm{compact} \ar@{=}[r] &  \textrm{s-proper} \ar@{=}[r] &  \textrm{proper}}
\]
\end{remark}

%{\color{blue}  \begin{remark}[What is behind the definition of PMCTs?]
%The underlying reason for how we have defined PMCTs are the methods we intend to use in their analysis. The 
%symplectic integration can, and should be used as a fundamental tool to study the Poisson manifold. By their very definition,  
%PMCTs have symplectic integrations well-behaved enough, so that we will be able to do Differential and Symplectic Geometry 
%at the Lie groupoid level, in a manner compatible with the multiplication. This will allow us to obtain a great deal of 
%information on the Poisson structure.
%\end{remark}} 
%{\color{red} M: I find it extremely odd to say that we introduce/are interesting in something because of the tools we want to use!!! 
%PMCTs are interesting because they exhibit a compactness behaviour. One may say that, all over in geometry, compactness is more tractable to "classifications"- and draw an analogy with compact Lie groups. The fact that there are more tools available is just an illustration of the fact that such structures are more tractable. But, any way, I propose to just delete the previous remark.} 

%%%%%%%%%%%%%%%%%%%%%%%%
%%%%%%%%%%%%%%%%%%%%%%%%
%%%%%%%%%%%%%%%%%%%%%%%%
%%%%%%%%%%%%%%%%%%%%%%%%
\subsection{Variation: Dirac structures}
%%%%%%%%%%%%%%%%%%%%%%%%
%%%%%%%%%%%%%%%%%%%%%%%%
%%%%%%%%%%%%%%%%%%%%%%%%
%%%%%%%%%%%%%%%%%%%%%%%%
The notions of $\mathcal{C}$-types and strong $\mathcal{C}$-types can be adapted to various other structures 
associated with Lie algebroids. For instance, for Lie algebras (i.e., Lie algebroids over a point),
the resulting notion of ``compact type" coincides with the usual one, whereas Lie algebras of ``strong compact type" 
are those of compact type which are semi-simple. Another interesting example, more relevant to Poisson Geometry, 
is that of foliations, viewed as Lie algebroids with injective anchor map. We defer a detailed analysis of the 
``compactness'' conditions for foliations to \cite{CFMb}.

Here we consider the case of Dirac structures. While they provide a generalization of Poisson structures, 
the main reason we consider them is not the resulting greater generality but their relevance to the Poisson case. 
For instance, several constructions, such as gauge equivalence (Section \ref{sub-sub:Gauge Equivalences}) or 
normal forms \cite{CFMc} are natural in the realm of Dirac geometry. Even more, the ``desingularization" of PMCTs, 
to be discussed in \cite{CFMc}, will take us into the Dirac setting.

Let us first recall a few basic concepts on Dirac geometry. Given a manifold $M$,
its generalized tangent bundle $TM\oplus T^*M$  is endowed with both a fiberwise metric of split signature
\begin{equation}\label{eq:genin}
\langle X+\alpha,Y+\beta\rangle_+=\frac{1}{2}(\beta(X)+\alpha(Y)),\, X,Y\in\mathfrak{X}(M),\,\alpha,\beta\in \Omega^1(M),
 \end{equation}
and a skew symmetric bracket (which fails to satisfy the Jacobi identity):
\begin{equation}\label{eq:genbra}
 [X+\alpha,Y+\beta ]=X+Y+\mathcal{L}_X\beta-\mathcal{L}_Y\alpha-\frac{1}{2}(d(i_X\beta-i_Y\alpha)).
\end{equation}

A \textbf{Dirac structure} on $M$ is a subbundle $L\subset TM\oplus T^*M$ of the generalized tangent bundle,
which is both maximally isotropic with respect to the metric (\ref{eq:genin}) and involutive with respect to the 
generalized bracket (\ref{eq:genbra}). This automatically turns $L\rightarrow M$ into a
Lie algebroid with anchor map the restriction of the first projection $TM\oplus T^*M\rightarrow TM$. 

Given a Dirac structure $L$ its presympectic foliation is, by definition, the characteristic foliation of its Lie algebroid. Each
leaf of this foliation is endowed with a closed 2-form, which is obtained by restricting to $L$ the natural skew-symmetric form:
\[
\langle X+\alpha,Y+\beta\rangle_-=\frac{1}{2}(\beta(X)-\alpha(Y)),\, X,Y\in\mathfrak{X}(M),\,\alpha,\beta\in \Omega^1(M).
\]
The non-degeneracy condition that distinguishes Poisson structures among Dirac structures, is that the presymplectic 
forms induced on the leaves of the characteristic foliation must be symplectic. This happens iff the subbundle
$L$ defining the Dirac structure is the graph of a bivector $\pi$:
\[ L_\pi=\{ (\pi^\sharp(\alpha),\alpha):\alpha\in T^*M \}. \]
For a general Dirac structure $L$ one defines the {\bf Poisson support} of $L$:
\begin{equation}\label{Poisson-support}
\textrm{supp}(L)=\{ x\in M: \textrm{pr}_2= T^{*}_{x} M\}\subset M,
\end{equation}
which is the largest open in $M$ on which $L$ is given by a Poisson bivector $\pi_L$.

Any closed 2-form $\omega\in\Omega^2(M)$ with non-trivial kernel leads to an example of a Dirac structure which is not Poisson: 
\[ L_\omega=\{ (X, i_X\omega):X\in TM \}. \]
In this example the presymplectic foliation has the single leaf $M$ and 
its Poisson support consists precisely of the points where $\omega$ is non-degenerate. 
% 
% Given a smooth map $\phi:M\to N$ smooth and Dirac structures $L_M$ on $M$ and $L_N$ on $N$, one defines (pointwise):
% \begin{enumerate}[(a)]
% \item The \textbf{push-forward} of $L_M$: $\phi_*(L_M):=\{(\d \phi(X),\a): (X,(\d\phi)^*\a)\in L_M\}$;
% \item The \textbf{pull-back} of $L_N$: $\phi^*(L_N):=\{(X,\phi^*\a): (\d \phi(X),\a)\in L_N\}$.
% \end{enumerate}
% We say that $\phi$ is a \textbf{forward Dirac map} (respectively, a \textbf{backward Dirac map}) 
% if $\phi_*(L_M)=L_N$ (respectively, $\phi^*(L_N)=L_M$). % Obviously, for local diffeomorphisms the two notions coincide.
% 
% Note that, in general, neither $\phi_*(L_M)$ nor $\phi^*(L_N)$ are well-defined.  One issue is the smoothness of the resulting bundles. For $\phi_*(L_M)$ there is one more issue: the previous formula defines ``$\phi_*(L_M)_y$'' in terms of $L_{N, x}$ whenever $x\in f^{-1}(y)$, but this may depend on the choice of $x$ in the fiber. When neither of these problems arises, then it follows that  $\phi_*(L_M)$ and $\phi^*(L_N)$ are indeed Dirac structures (for more details and various examples, one may consult \cite{Bur}). 
% When $\phi$ is an embedding and $\phi^*(L_M)$
% is a Dirac structure on $M$, we say that $N$ is a Dirac submanifold. Of course, when $L_M=L_\pi$ we recover the notion of
% Poisson-Dirac submanifold.

A Dirac structure $(M,L)$ is called integrable if the Lie algebroid $L\rightarrow M$ is integrable. 
The canonical integration with 1-connected fibers will still be denoted by $\Sigma(M, L)\tto M$ and called the Weinstein
groupoid of $(M, L)$. This  Lie groupoid also carries a closed 2-form, but which may be degenerate, albeit in a very controlled way. In general, 
a {\bf presymplectic groupoid} $(\G\tto M,\Omega)$ is a Lie groupoid $\G$ endowed with a multiplicative 2-form $\Omega$ such that:
\[ \Ker\Omega\cap\Ker \d s\cap \Ker\d t=\{0\}. \]
% satisfying the non-degeneracy condition
% \begin{equation}\label{eq:presympl:non-degen}
% \Ker\Omega_\Sigma\cap\Ker\d t\cap\Ker\d s=0.
% \end{equation} 
As in the symplectic case, $\dim \G=2\dim M$ and  the base manifold carries a canonical Dirac structure $L$.
% ,such that the target/source maps become
% forward/anti-forward Dirac maps.
We then say that $(\G,\Omega)$ is a presymplectic
groupoid integrating $(M, L)$.  We refer to \cite{BCWZ} for a detailed account.
% Again, the presymplectic groupoid $\Sigma(M,L)$ sits on top of every s-connected integration: given any s-connected
% presymplectic groupoid $(\G,\Omega_\G)$ integrating $L$ there is a short exact sequence:
% \begin{equation}\label{eq:oidquot}
% \xymatrix{0\ar[r]& \cK\ar[r]^i& \Sigma(M,L)\ar[r]^\Phi& \G\ar[r]&0} 
% \end{equation}
% where $\Phi$ is a local isomorphism of presymplectic groupois, i.e., 
% $\Phi^*\Omega_\G=\Omega_\Sigma$, and $\cK$ is a discrete bundle of Lie groups which is
% Lagrangian , i.e., $i^*\Omega_\Sigma=0$. 

It should now be clear how to define \textbf{Dirac structures of $\mathcal{C}$-type or strong
$\mathcal{C}$-type}, where $\mathcal{C}$ is any of the compactness types (\ref{types}): in Definition
\ref{def:pmct} one replaces ``symplectic'' by ``presymplectic" and $\Sigma(M, \pi)$ by $\Sigma(M,L)$. 

Recalling that the Poisson support (\ref{Poisson-support}) is an open saturated set, we obtain the following relationships:
\begin{proposition} For $\mathcal{C}\in \{\textrm{proper}, \textrm{s-proper}\}$, if $(M, L)$ is a 
Dirac manifold of (strong) $\mathcal{C}$-type, then its Poisson support $(\textrm{supp}(L),\pi_L)$ is a Poisson
manifold of (strong) $\mathcal{C}$-type. If $(M, L)$ is a Dirac manifold of (strong) compact type, then its 
Poisson support $(\textrm{supp}(L),\pi_L)$ is a Poisson manifold of (strong) s-proper type.
\end{proposition}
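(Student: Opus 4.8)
The plan is to realize the required symplectic integration of the Poisson support by \emph{restricting} a suitable presymplectic integration of $(M,L)$ to the open saturated set $U:=\textrm{supp}(L)$, and then to check that each compactness condition survives the restriction.

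First I would fix an s-connected presymplectic integration $(\cG\tto M,\Omega)$ of $(M,L)$ having the property $\cC$ (in the strong case one takes $\cG=\Sigma(M,L)$) and set $\cG_U:=s^{-1}(U)$. Since $U$ is saturated one has $s^{-1}(U)=t^{-1}(U)$, so $\cG_U$ is an open, hence Hausdorff, subgroupoid of $\cG$ over $U$, and $\textrm{Lie}(\cG_U)=L|_U=T^*U$ is exactly the cotangent Lie algebroid of $(U,\pi_L)$. The key elementary observation is that for $x\in U$ the source fiber of $\cG_U$ at $x$ coincides with the full source fiber $s^{-1}(x)$ of $\cG$ (any arrow out of $x\in U$ automatically lies in $\cG_U$); consequently $\cG_U$ is again s-connected, and it is s-$1$-connected whenever $\cG$ is.

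Next I would argue that $\Omega|_{\cG_U}$ is non-degenerate, so that $\cG_U$ is a genuine \emph{symplectic} groupoid integrating $(U,\pi_L)$. Over $U$ the leafwise $2$-forms of $L$ are symplectic, and the description of $\Ker\Omega$ for a presymplectic groupoid (see \cite{BCWZ}) shows that $\Omega$ is non-degenerate precisely over the Poisson support; since $\Ker\Omega$ is determined pointwise by the leafwise geometry of $L$, this holds over $U$ for any integration. In the strong case $\cG_U$ is then an s-$1$-connected symplectic integration of $T^*U$, so by uniqueness of such integrations it is the Weinstein groupoid $\Sigma(U,\pi_L)$ with its canonical symplectic form. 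Thus, in all cases, $\cG_U$ is an s-connected (respectively canonical) symplectic integration of $(\textrm{supp}(L),\pi_L)$, and it remains only to transfer the compactness type.

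Here the coincidence of the fibers of $\cG_U$ and $\cG$ over $U$ makes everything formal. If $\cG$ is s-proper, then for compact $K\subseteq U$ the preimage $s^{-1}(K)$ computed in $\cG_U$ agrees with the one in $\cG$ and is therefore compact, so $\cG_U$ is s-proper; if $\cG$ is proper, then for compact $K\subseteq U\times U\subseteq M\times M$ the set $(s,t)^{-1}(K)\subseteq\cG_U$ likewise agrees with its preimage in $\cG$ and is compact, so $\cG_U$ is proper. Finally, if $\cG$ is compact, then for compact $K\subseteq U$ the set $s^{-1}(K)$ is closed in the compact manifold $\cG$, hence compact, so $s|_{\cG_U}$ is proper; this yields only s-properness, and the loss is genuine because the open subset $\cG_U=s^{-1}(U)$ of a compact $\cG$ need not be compact. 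This is exactly the drop from compact to s-proper type in the last assertion. The only step that is not pure point-set topology is the non-degeneracy of $\Omega$ over the Poisson support; I would regard pinning this down — extracting from \cite{BCWZ} the precise statement that $\Ker\Omega$ vanishes exactly over $\textrm{supp}(L)$ — as the main (and essentially the only) real obstacle, everything else following from the identification of the source fibers of $\cG_U$ with those of $\cG$.
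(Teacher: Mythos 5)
Your argument is correct and is exactly the route the paper intends: the paper gives no written proof beyond the remark that $\textrm{supp}(L)$ is open and saturated, and the intended argument is precisely your restriction of the (pre)symplectic integration to $s^{-1}(U)=t^{-1}(U)$, where the source fibers are unchanged, $\Omega$ becomes non-degenerate because $L$ is the graph of $\pi_L$ over $U$ (the kernel of a multiplicative presymplectic form at $g$ is controlled by the failure of $L$ to be Poisson at $s(g)$ and $t(g)$, as in \cite{BCWZ}), and compactness degrades to s-properness since an open subgroupoid of a compact groupoid need not be compact. Nothing is missing.
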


Finally, we recall that one can enlarge the class of Dirac structures by allowing for a {\bf background closed 3-form} $\phi\in\Omega^3(M)$ \cite{SW}. One defines a {\bf $\phi$-twisted Dirac structure} by replacing in the definition the bracket (\ref{eq:genbra}) by its $\phi$-twisted version:
\[ [X+\alpha,Y+\beta ]_{\phi}:= [X+\alpha,Y+\beta ]+ i_Xi_Y(\phi).\]
The discussion regarding integrability extends in a straightforward manner provided one uses $\phi$-twisted presymplectic
groupoids $(\G,\Omega)$, i.e., now the 2-form is not closed anymore, but satisfies the twisting condition:
\[ \d \Omega=t^*\phi-s^*\phi. \]
The definition of $\phi$-twisted Dirac structure of $\mathcal{C}$-type or strong $\mathcal{C}$-type is now obvious.
We shall use the abbreviation {\bf DMCTs} to refer to any of these (twisted) Dirac manifolds of compact types.

%%%%%%%%%%%%%%%%%%%%%%%%
%%%%%%%%%%%%%%%%%%%%%%%%
%%%%%%%%%%%%%%%%%%%%%%%%
%%%%%%%%%%%%%%%%%%%%%%%%
\section{First examples of PMCTs} 
\label{sec:examples}
%%%%%%%%%%%%%%%%%%%%%%%%
%%%%%%%%%%%%%%%%%%%%%%%%
%%%%%%%%%%%%%%%%%%%%%%%%
%%%%%%%%%%%%%%%%%%%%%%%%
%%%%%%%%%%%%%%%%%%%%%%%%
In this section we will introduce several classes of examples of PMCTs. Most notably, 
these examples will uncover a fundamental connection between PMCTs and integral affine structures.

\subsection{(Pre)symplectic manifolds}
\label{sec:ex:symplectic}
Symplectic manifolds $(S,\omega)$ correspond to non-degenerate Poisson structures $\pi$. The symplectic foliation has only one leaf and
the anchor map $\pi^{\sharp}: T^*S\to TS$ is an isomorphism between cotangent and tangent Lie algebroids. 
Just like for foliations (in fact, as a particular case of them), the tangent Lie algebroid has two extreme integrations:
\begin{itemize}
\item the pair groupoid $S\times S\tto S$, with source/target map the second/first projection and  multiplication given by
$(x, y)\cdot (y, z)= (x, z)$.
\item the fundamental groupoid $\Mon(S)$, consisting of homotopy classes of paths in $S$ (relative to the endpoints). If $\widetilde{S}$
denotes the universal cover of $S$, this groupoid can also be described as the quotient of the pair groupoid of $\widetilde{S}$ modulo 
the diagonal action of $\pi_1(S)$:
\[ \Mon(S)\cong(\widetilde{S}\times \widetilde{S})/\pi_1(S). \] 
\end{itemize}
Any other s-connected integration $\cG$ lies in between these two, since there are surjective groupoid submersions:
\[ \xymatrix{\Mon(S)\ar[r] & \cG \ar[r] & S\times S}. \]
More precisely, integrations $\cG$ as above are in 1-1 correspondence with covering spaces $\hat{S}$ of $S$:
\[ \cG\cong(\hat{S}\times \hat{S})/\Gamma,\] 
where $\Gamma$ is the corresponding group of deck transformations. Moreover, all these groupoids are symplectic,  
with the multiplicative symplectic form 
\[ \Omega=t^*\omega-s^*\omega.\]
We deduce that $(M,\omega)$ is:
\begin{itemize}
\item  always of proper type;
\item of compact type iff it is s-proper iff $M$ is compact; 
\item of strong proper type iff the fundamental group of $M$ is finite;
\item of strong compact type iff it is strong s-proper iff $M$ is compact and has finite fundamental group.
\end{itemize}
The previous discussion applies more generally to all (twisted) Dirac structures $L_{\omega}$ associated with 
a 2-form $\omega\in \Omega^2(S)$ (i.e., presymplectic manifolds $(S, \omega)$). The conclusions are the same. 

\subsection{The zero Poisson structure and integral affine structures}
\label{sec:integral:affine}
The zero Poisson structure  $\pi\equiv 0$ on a manifold $M$ has symplectic leaves the points of $M$.
This does not appear to be a very interesting object, but we claim that even for this trivial
Poisson structure Definition \ref{def:pmct} has some interesting
nontrivial consequences! 

First of all, the Weinstein groupoid of $(M, 0)$ is 
\[ (\Sigma(M, 0),\Omega_{\Sigma(M,0)})=(T^*M,\omega_{\textrm{can}})\] 
viewed as a bundle of Abelian Lie groups (with addition on the fibers). Obviously, $(M, 0)$ is never of strong proper type. However,
it may be of proper type (hence, also s-proper type).  In fact, a choice of a proper symplectic integration is equivalent
to the choice of an integral affine structure on $M$. This is the first sign of the relevance of integral affine structures to the study of PMCTs. 
First, we recall:

%We first recall the definition of an integral affine structure, in terms of atlases. 

\begin{definition}\label{def:ia-atlas} An {\bf integral affine structure} on a $q$-dimensional manifold $M$ is a choice of a maximal
atlas $(U_i,\phi_i)_{i\in I}$, such that each transition function 
\[ \phi_{j}\circ\phi^{-1}_i:\phi_i(U_i\cap U_j)\to \phi_j(U_i\cap U_j) \]
is (the restriction of) an integral affine transformation
\[
\mathbb{R}^q\to \mathbb{R}^q, \quad u\mapsto u+ A(x),\quad u\in \mathbb{R}^q,\quad A\in \mathrm{GL}_\Z(\R^q).
\]
\end{definition}

The coordinates $(U_i,\phi_i)$ in the special atlas defining an integral affine structure are called \emph{integral affine coordinates}. 

By a {\bf lattice} in a vector space $V$ we will mean a discrete subgroup of $(V, +)$ of maximal 
rank (often called in the literature a full rank lattice). By a (smooth)  lattice in a vector bundle $E\to M$ we mean a 
sub-bundle $ \Lambda\subset E$ whose fibers $\Lambda_x$ consist of lattices in the vector spaces $E_x$. In particular $\Lambda$ is a 
submanifold of $E$ transverse to the fibers and of the same dimension as $M$. Integral affine structures on $M$ are in 1-1 correspondence with \emph{Lagrangian lattices} in $T^*M$, i.e., lattices $\Lambda\subset T^*M$ such that the pullback of $\omega_{\textrm{can}}$ to $\Lambda$ is zero: $(U,x^1,\dots,x^q)$ is an integral affine coordinate chart if and only if for any $x\in U$ one has: 
\[ \Lambda_x=\Z\langle \d_x x^1,\dots,\d_x x^q\rangle. \]
 
\begin{proposition}\label{pro:lag-zero}
Proper symplectic integrations of $(M,0)$ are in 1-1 correspondence with integral affine structures on $M$.
\end{proposition}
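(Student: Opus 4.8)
The plan is to realise every proper symplectic integration of $(M,0)$ as a quotient of the Weinstein groupoid $\Sigma(M,0)=(T^*M,\omega_{\can})$, to translate the two defining conditions (properness and symplecticity) into properties of the kernel of this quotient, and then to invoke the already-recalled bijection between Lagrangian lattices in $T^*M$ and integral affine structures. First I would note that the cotangent Lie algebroid of $(M,0)$ is $T^*M$ with zero anchor and zero bracket, a bundle of abelian Lie algebras, whose canonical source $1$-connected integration is $\Sigma(M,0)=(T^*M,\omega_{\can})$ viewed as a bundle of additive groups. By the general theory recalled in Section~\ref{ssec:liealg}, any s-connected integration $\cG$ is a quotient of $\Sigma(M,0)$ by the kernel $\Lambda\subset T^*M$ of a covering morphism $\Sigma(M,0)\to\cG$; since every subgroup of an abelian group is normal, I would argue that the s-connected integrations of $(M,0)$ are exactly the quotients $T^*M/\Lambda$, where $\Lambda$ is a smooth bundle of discrete subgroups of the fibres, Hausdorffness of the quotient corresponding to $\Lambda$ being closed and embedded. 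As the covering map is a local diffeomorphism it induces the identity on Lie algebroids, so each such quotient genuinely integrates $(M,0)$.

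Next I would impose the two conditions. For properness, I would observe that the leaves are points, hence compact, so proper and s-proper coincide here, and that the s-fibre of $T^*M/\Lambda$ at $x$ is $T^*_xM/\Lambda_x$; this is compact precisely when $\Lambda_x$ is a full-rank lattice. Thus properness of the integration is equivalent to $\Lambda$ being a (full-rank) lattice in $T^*M$, and the s-fibres are then tori $\T^q$. For the symplectic condition I would use that the quotient map $T^*M\to T^*M/\Lambda$ is a local diffeomorphism, so that $\omega_{\can}$ descends to a form $\overline{\omega_{\can}}$ on the quotient if and only if $\omega_{\can}$ is invariant under the fibrewise translations by local sections of $\Lambda$; the descended form is then automatically multiplicative (because $\omega_{\can}$ is, and the quotient is a groupoid morphism) and non-degenerate (because the quotient is a local diffeomorphism), hence symplectic.

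The technical heart of the argument, and the step I expect to require the most care, is to identify this translation-invariance with the Lagrangian condition $\omega_{\can}|_\Lambda=0$. Working in a local frame $\xi_1,\dots,\xi_q$ of $\Lambda$, the sheets of $\Lambda$ near a point are the graphs of the $1$-forms $\sum_i n_i\xi_i$, $n\in\Z^q$; since the pullback of $\omega_{\can}$ to the graph of a $1$-form $\beta$ equals $\d\beta$ up to sign, $\Lambda$ is Lagrangian if and only if each generator $\xi_i$ is closed. A direct computation in canonical coordinates shows likewise that translation by a section $\xi$ preserves $\omega_{\can}$ exactly when $\xi$ is closed, so invariance of $\omega_{\can}$ under $\Lambda$ and the Lagrangian condition are literally the same requirement. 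Combining the two reductions, proper symplectic integrations of $(M,0)$ correspond, via their kernel $\Lambda$, bijectively to Lagrangian full-rank lattices in $T^*M$. Finally I would check that this assignment is inverse to the construction $\Lambda\mapsto(T^*M/\Lambda,\overline{\omega_{\can}})$ — the kernel of the tautological covering $T^*M\to T^*M/\Lambda$ is exactly $\Lambda$ — so the correspondence is one-to-one, and composing with the correspondence between Lagrangian lattices and integral affine structures recalled before the statement yields the claim.
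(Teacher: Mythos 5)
Your proposal is correct and follows essentially the same route as the paper's proof: realise the proper integration as a quotient $T^*M/\Lambda$ of the Weinstein groupoid, identify properness with $\Lambda$ being a full-rank lattice, and identify descent of $\omega_{\can}$ with the Lagrangian condition on $\Lambda$. You merely spell out details the paper leaves implicit (in particular the computation that fibrewise translation by a local section $\xi$ of $\Lambda$ preserves $\omega_{\can}$ exactly when $\xi$ is closed, which is the same as $\omega_{\can}|_\Lambda=0$).
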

\begin{proof}
Let $(\G,\Omega)$ be a proper integration of $(M,0)$. Then $\G$ is isomorphic to a quotient $T^*M/\Lambda$,
where $\Lambda$ is a (full rank) lattice due to the compactness of the source/target fibers of $\G$. Moreover, since the 
canonical symplectic form $\omega_{\textrm{can}}$ on $T^*M$ descends to $T^*M/\Lambda$, the lattice $\Lambda$ must be Lagrangian.
Conversely, any integral affine structure on $M$ defines a Lagrangian lattice $\Lambda\subset T^*M$, and hence provides an s-proper 
symplectic integration $(T^*M,\omega_{\textrm{can}})/\Lambda$ of $(M,0)$.
\end{proof}

Integral affine structures will be discussed at length in \cite{CFMb,CFMc}, where they will be used to prove that any PMCT admits an \emph{orbifold integral affine structure} on its leaf space.

\subsection{Symplectic fibrations over integral affine manifolds}

Symplectic fibrations $p:M\to B$ provide an interesting class of regular Poisson manifolds
with smooth leaf space. When the base is an integral affine manifold and, e.g. the fibers are 1-connected, $(M,\pi)$ is of proper type, as we will now explain.

A symplectic fibration can be characterized by the existence of a 2-form $\omega$ on $M$ extending the leafwise symplectic forms and satisfying:
\begin{equation}
\label{eq:symp:fibration}
i_X i_Y\d\omega=0,\quad \forall X,Y\in\ker\d p.
\end{equation}
When $M$ is compact, this condition is equivalent to local triviality of the fibration as a symplectic fiber bundle (see \cite[Chapter 1]{GLS}).
The canonical integration $\Sigma(M,\pi)$ is the action groupoid
\[ \Mon(\cF_\pi)\ltimes \nu^*(\cF_\pi) \tto M, \]
where $\Mon(\cF_\pi)$ acts on the conormal bundle $\nu^{*}(\cF_\pi)$ via the linear holonomy representation (i.e., by parallel transport
relative to the Bott connection). The symplectic form is of type:
\[ \Omega=\pr^*_{\Mon}\Omega_{\Mon}+\pr^*_{\nu^*}\omega_\can, \]
where $\omega_\can$ is the canonical 2-form on $\nu^*(\cF_\pi)$ and $\Omega_{\Mon}$ is a multiplicative 2-form on
the monodromy groupoid. When \eqref{eq:symp:fibration} is replaced by the stronger condition:
\begin{equation}
\label{eq:symp:fibration:stronger}
i_X\d\omega=0,\quad \forall X\in\ker\d p,
\end{equation}
then:
\[ \Omega_{\Mon}=t^*\omega-s^*\omega. \]
In the general case this form may fail to be closed- problem which fits in the usual theory of symplectic fibrations, but this time for the fibration $s:\Mon(\cF)\to M$
with the leafwise symplectic forms induced by $\Omega_{\Mon}$. To replace $\Omega_{\Mon}$ by a closed (multiplicative) 2-form one can make use of the standard theory 
of coupling forms that is available for fibrations with 1-connected fibers (see \cite[Chapter 6]{McDS}). In our context however there is a construction of the coupling form that is 
more canonical and which also allows us to treat other integrations of $T^*M$. The idea is to modify $\Omega_{\Mon}$ by a 2-form $B\in\Omega^2(\Mon(\cF_\pi))$ which is horizontal with respect to the s-fibers; such 2-forms are automatically multiplicative. If we identify the normal space to the s-fiber at $[\gamma]\in \Mon(\cF_\pi)$ with $\nu_{\gamma(0)}(\cF_\pi)$, then
 $B$ can be seen as a groupoid 1-cocycle with values in the representation $\wedge^2\nu^*(\cF_\pi)$: 
\[ B\in C^{1}_{\text{diff}}(\Mon(\cF_\pi),\wedge^2\nu^*(\cF_\pi))\subset \Omega^2(\Mon(\cF_\pi)).\] 
As such, $B$ is defined as the integration of the algebroid 1-cocycle:
\[ b\in\Omega^1(\cF_\pi,\wedge^2\nu^*(\cF_\pi)),\quad X\mapsto i_X\d\omega, \]
the integration exists because the s-fibers of $\Mon(\cF_\pi)$ are 1-connected. 

Note that $\Mon(\cF_\pi)$ might fail to be proper/Hausdorff. Replacing it by the holonomy groupoid $\Hol(\cF_\pi)=M\times_B M$  results in the action groupoid:
\[ (M\times_B M)\ltimes \nu^*(\cF_\pi) \tto M. \]
However, in general, this groupoid fails to be symplectic. The reason is that the cocycle $b\in\Omega^1(\cF_\pi,\wedge^2\nu^*(\cF_\pi))$ 
may not integrate to  $M\times_BM$. Since this groupoid is proper, $b$ integrates if and only if it is a coboundary. 
Hence the obstruction for this integration to be symplectic is the cohomology class:
\[ [b]\in H^1(\cF_\pi,\wedge^2\nu^*(\cF_\pi)). \]
This class vanishes, e.g., when the leaves of $\cF_\pi$ are 1-connected.

Assuming $[b]=0$ we obtain a symplectic  integration which is not yet proper. However, if $(B,\Lambda)$ is an integral affine manifold then we can further quotient by the lattice $p^*\Lambda$, resulting in the proper, symplectic integration: 
\[ (M\times_B M)\ltimes (\nu^*(\cF_\pi)/p^*\Lambda) \tto M. \]
Finally, notice that the condition $[b]=0$ amounts to the existence of an extension $\omega$ satisfying the 
stronger condition \eqref{eq:symp:fibration:stronger}. In turn, this means that the associated symplectic
connection has Hamiltonian curvature (see \cite[Chapter 6]{McDS}).

% 
% , i.e. a $\mathbb{Z}$-subspace of $V$ of type
% \[ \Lambda= \mathbb{Z}v_1+ \ldots + \mathbb{Z}_q v_q,\]
% where $\{v_1, \ldots, v_q\}$ is a basis of $V$; of course, different bases may induce the same lattice.
% Recall that the dual of a lattice is the lattice $\wedge^{\vee}\subset V^*$ defined by 
% \[ V^{\vee}= \{\xi\in V^*: \langle \xi, \lambda\rangle \in \mathbb{Z}\ \ \forall\ \lambda\in \Lambda \}.\]
% By a (smooth) lattice on a vector bundle $E\to M$ we mean a sub-bundle 
% \[ \Lambda\subset E \]
% consisting of lattices $\Lambda_x$ on the vector space $E_x$ (one for each $x\in M$). We say that it is smooth if,
% locally, around each point $x_0\in M$, one can write
% \[ \Lambda_x=  \mathbb{Z}v_1(x)+ \ldots + \mathbb{Z}_q v_q(x)\]
% as above, with $v_i$ smooth local sections of $E$. 

\subsection{Foliations and transverse integral affine structures}
\label{sec:integral:affine:foliation}
The same way any manifold carries the zero Poisson structure, any (regular) foliation carries a Dirac structure with zero presymplectic 
forms. More precisely, any foliation $\cF$ on $M$ gives rise to the Dirac structure:
\[ L_{\cF}:= \cF \oplus \nu^{*}(\cF).\]
Clearly, the presymplectic leaves are the leaves of $\cF$ endowed with the zero presymplectic forms. The source 1-connected integration of $L_\cF$ is obtained as above:
\[ \Sigma(M,L_\cF)=\Mon(\cF)\times_M \nu^{*}(\cF)\tto M, \]
with presymplectic form $\Omega_\Sigma=\pr^*_{\nu^*}\omega_\can$.
% is the pullback of the canonical symplectic form on the conormal bundle (see \cite{BCWZ}).

One can obtain s-connected proper presymplectic integrations of $(M, L_{\cF})$ by considering a Lagrangian lattice $\Lambda\subset \nu^{*}(\cF)\subset T^*M$. Such a lattice is invariant under the action of the holonomy groupoid. Any s-connected groupoid $\cG$ integrating $\cF$ sits in between the monodromy and the holonomy groupoid:
\[ \Mon(\cF)\to \cG\to \Hol(\cF). \]
Hence, if $\cG$ is proper we obtain the s-connected proper presymplectic integration:
\[ \cG\times_M \nu^*(\cF)/\Lambda\tto M. \]
In particular, when the leaves of $\cF$ are compact with finite fundamental group one can take any integration $\cG$ of $\cF$.

%How can one obtain s-connected proper presymplectic integrations of $(M, L_{\cF})$? We defer a detailed analysis of this question to \cite{CFMb}.
%For now we observe that they arise as quotients of the source 1-connected integration, so the previous description shows that they will
%be intimately related to both proper integrations of the foliation $\cF$ and the existence of 
%a Lagrangian lattice $\Lambda\subset \nu^{*}(\cF)\subset T^*M$. For example, if  $\Mon(\cF)$ is proper and $\Lambda\subset \nu^{*}(\cF)\subset T^*M$ 
%is a Lagrangian lattice invariant under the linear holonomy action, then we obtain the s-connected proper presymplectic integration:
%\[ \Mon(\cF)\times_M \nu^*(\cF)/\Lambda\tto M. \]

Just like a Lagrangian lattice $\Lambda\subset T^*M$ defines an integral affine structure on $M$, a Lagrangian lattice 
$\Lambda\subset \nu^{*}(\cF)\subset T^*M$ defines a {\bf transverse integral affine structure} on $\cF$.
It amounts to a special choice of a foliated atlas, and it can be thought of as an integral affine structure on the leaf space $M/\cF$. Transverse integral affine structures will be discussed at length in \cite{CFMb,CFMc}.

%{\color{blue} where we will provide a characterization analogous to Proposition \ref{pro:lag-zero} that  will be used} to prove that any PMCT admits an \emph{orbifold integral affine structure} on its leaf space.

\subsection{Linear Poisson structures}
\label{sec:linear:Poisson}
Recall that a Poisson bracket on a  vector space $V$ is called linear if the the Poisson bracket of linear functions is again a linear function. 
This happens precisely when $V=\gg^*$ for some  Lie algebra $\gg$, and then the Poisson bracket is given by the Kostant-Kirillov-Souriau formula:
\[ \{f,g\}(\xi)=\langle \xi,[\d_\xi f,\d_\xi g]_\gg\rangle, \quad \forall f,g\in C^\infty(\gg^*),\ \xi\in\gg^*. \]

The s-connected symplectic groupoids integrating the linear Poisson structure
on $\gg^*$ are of the form $(T^*G, \omega_{\textrm{can}})$, where $G$ is any connected Lie group
integrating $\gg$. 
Here $T^*G$ is viewed as a groupoid over $\gg^*$ with source/target maps the right/left
trivializations:
\[ s(\a_g)=(\d R_g)^*\a_g,\qquad t(\a_g)=(\d L_g)^*\a_g. \]
Equivalently, $T^*G= G\ltimes \gg^*$ is the action groupoid associated to the coadjoint action of $G$ on $\gg^*$. 
% Indeed, for any action of a Lie group $G$ on  a manifold $M$, one has an associated Lie groupoid, the action groupoid $G\ltimes M$, with
% \[G\ltimes M:= G\times M, \  s(g, x)= x, \ t(g, x)= gx, \ (h, y)\cdot (g, x)= (hg, x) \ (\textrm{defined\ when}\ y= gx) ;\]
% when applied to $M= \gg^*$ with the coadjoint action, one obtains $T^*G$ after the indetification
% of $T^*G$ with $G\times \gg^*$ using ? translations. 
It follows that:
\begin{itemize}
\item $\gg^*$ is proper (respectively, strong proper)  iff s-proper (respectively, strong s-proper) iff there exists some compact (respectively, compact 1-connected) Lie group integrating $\gg$, i.e.,
iff  $\gg$ is a Lie algebra of compact type (respectively, semisimple of compact type);
\item $\gg^*$ is never of compact type.
\end{itemize}

\subsection{Cartan-Dirac structures}
There is a Dirac structure of compact type associated to any Lie group $G$ whose Lie algebra $\gg$ admits an invariant 
scalar product $\langle \cdot,\cdot\rangle$. This class of Dirac structures is the analogue of the linear Poisson structures on $\gg^*$.
The main example to keep in mind is when $G$ is a compact Lie group. It is instructive to have a look at the various
basic results on compact Lie groups/algebras, e.g., in \cite{DK}, and note that virtually all results have two versions:
one for $\gg$ and one for $G$! In some sense, which will become clear in \cite{CFMb,CFMc}, the theory of PMCTs and DMCTs developed 
here makes this precise.

Recall that associated to an invariant scalar product on $\gg$ there is the so-called Cartan 3-form
\[ \phi(X,Y,Z):=\langle X,[Y,Z]\rangle, \]
where $X$, $Y$ and $Z$ are any left-invariant vector fields on $G$. This is a closed 3-form
on $G$ which serves as a background. The Cartan-Dirac structure is the $\phi$-twisted Dirac structure on $G$ defined by 
\[ L_G=\{ (\xi^L+\xi^R,\frac{1}{2}(\xi^L-\xi^R): \xi\in \gg\}, \]
where $\xi^R$ and $\xi^L$ denote the right/left translations of $\xi\in\gg$, and we use the
scalar product to identify $TG$ and $T^*G$. 

The twisted Dirac structure $L_G$ integrates to a twisted presymplectic  groupoid $\G$ which, as a Lie groupoid,
is the action groupoid $G\ltimes G$ associated to the action of $G$ on itself by conjugation. If we assume that $G$ is connected,
this is an s-connected integration of $L_G$.
The Weinstein groupoid $\Sigma(G,L_G)$ is the action groupoid $\widetilde{G}\ltimes G$, where $\widetilde{G}$ is the 1-connected 
covering Lie group of $G$.

Now observe that the action by conjugation of $G$ on itself is proper iff $G$ is compact. Hence, we conclude that:
\begin{itemize}
\item  $L_G$ is of proper type iff it is s-proper iff it is of compact type iff $G$ is compact;
\item  $L_G$ is of strong proper iff it is strong s-proper iff it is of strong compact type iff $\widetilde{G}$ is compact iff $\gg$ is a
compact semisimple Lie algebra.
\end{itemize}

\section{Constructions of PMCTs}
\label{sec:constructions}

We now present various constructions of PMCTs. In particular, our discussion of quotients  will show how a well-known question
in Symplectic Topology leads to the construction of  Poisson manifolds of strong compact type (which are not symplectic manifolds).

\subsection{Products}
Recall that a smooth function $c$ on a Poisson manifold $(M,\pi)$ is called a Casimir
if $X_c=\pi^\sharp(\d c)\equiv 0$, i.e., if it Poisson commutes with any other function.
If $(\G,\Omega)\tto M$ is any s-connected symplectic groupoid integrating $(M,\pi)$,
then $c$ is a Casimir iff it is constant on each orbit, i.e., if $c\circ s=c\circ t$.

Now let $(M_1,\pi_1)$ and $(M_2,\pi_2)$ be two Poisson manifolds and let $c\in C^\infty(M_1)$ be a Casimir. 
The \textbf{$c$-warped product} of $M_1$ and $M_2$ is the manifold $M_1\times M_2$ equipped with the Poisson
bivector $\pi=\pi_1\oplus c\pi_2$. We will denote the $c$-warped product of $M_1$ and $M_2$ by $M_1\ltimes_c M_2$.
When $c\equiv 1$ we write instead $M_1\times M_2$ and call it simply the product.

It is easy to check that for any integrable Poisson manifolds $(M_1,\pi_1)$ and $(M_2,\pi_2)$ and any non-vanishing Casimir
$c\in C^\infty(M_1)$:
\begin{itemize}
 \item if  $(\G_i,\Omega_i)\tto M_i$ are (Hausdorff) s-connected integrations of $(M_i,\pi_i)$, then 
 \begin{equation}\label{eq:warped}
  (\G_1\times\G_2,\Omega_1 \oplus\frac{1}{\widetilde{c}} \Omega_2)
 \end{equation} 
 is a (Hausdorff) s-connected integration of $M_1\ltimes_c M_2$;
 \item in particular, the canonical integration of $M_1\ltimes_c M_2$ is 
 \[ (\Sigma(M_1\ltimes_c M_2),\Omega_{\Sigma})=(\Sigma(M_1)\times\Sigma (M_2),\Omega_{\Sigma(M_1)} \oplus\frac{1}{\widetilde{c}} \Omega_{\Sigma(M_2)}). \]
 \end{itemize}

\begin{corollary}\label{cor:warped}
For any Poisson manifolds $(M_1,\pi_1)$
and $(M_2,\pi_2)$ and any non-vanishing Casimir $c\in C^\infty(M_1)$,
the $c$-warped product $M_1\ltimes_c M_2$ is of (strong) $\cC$-type iff each factor $(M_i,\pi_i)$ is of
(strong) $\cC$-type, for any $\cC$ in (\ref{types}).
\end{corollary}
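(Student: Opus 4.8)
The plan is to reduce everything to a single topological observation about products of Lie groupoids and then to isolate the one genuinely non-formal case. First I would record the \emph{product lemma}: for Lie groupoids $\G_i\tto M_i$ the product $\G_1\times\G_2\tto M_1\times M_2$ is of $\cC$-type if and only if both $\G_i$ are, for every $\cC$ in (\ref{types}). For $\cC=\textrm{compact}$ this is Tychonoff together with the fact that a groupoid is non-empty (it contains its units); for $\cC=\textrm{s-proper}$ one uses that the source map of the product is $s_1\times s_2$, that a product of proper maps is proper and, conversely, that $(s_1\times s_2)^{-1}(K_1\times\{y_2\})=s_1^{-1}(K_1)\times s_2^{-1}(y_2)$ with $s_2^{-1}(y_2)\neq\emptyset$ forces $s_1$ to be proper; for $\cC=\textrm{proper}$ one argues identically with the anchor $(s,t)\cong(s_1,t_1)\times(s_2,t_2)$ after reordering the factors of $M\times M$.

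With this lemma in hand, three of the four implications are immediate from the two facts preceding the statement. If each $(M_i,\pi_i)$ is of $\cC$-type, pick s-connected integrations $\G_i$ with property $\cC$; then (\ref{eq:warped}) produces the Hausdorff s-connected integration $\G_1\times\G_2$ of $M_1\ltimes_c M_2$, which has property $\cC$ by the lemma. For the strong versions I would instead invoke the identity $\Sigma(M_1\ltimes_c M_2)=\Sigma(M_1)\times\Sigma(M_2)$ for the canonical integrations: the lemma then gives at once that the canonical integration of the warped product has property $\cC$ if and only if both $\Sigma(M_i)$ do, settling the strong statement in both directions.

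The one remaining case, and the step I expect to be the main obstacle, is the \emph{non-strong converse}: assuming that \emph{some} s-connected integration $\G$ of $M_1\ltimes_c M_2$ has property $\cC$, one must produce s-connected integrations of the factors having property $\cC$. Here one cannot simply project $\G$, because an arbitrary integration of the product need not be a product: by Section \ref{ssec:liealg}, $\G$ is a quotient of the canonical $\Sigma(M_1)\times\Sigma(M_2)$ by a bundle of discrete central subgroups $N\subset Z(\Sigma(M_1))\times Z(\Sigma(M_2))$, and the image $\pr_1(N)$ may be non-discrete, so the naive factor $\Sigma(M_1)/\pr_1(N)$ is ill-defined.

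My plan is to argue through the intrinsic invariants instead, which split across the product. The leaves of $M_1\ltimes_c M_2$ are the products $S_1\times S_2$ of leaves, the isotropy Lie algebras satisfy $\gg_{(x_1,x_2)}=\gg^1_{x_1}\times\gg^2_{x_2}$, and one checks from (\ref{monodromy-map-eq}) and $\pi_2(S_1\times S_2)=\pi_2(S_1)\times\pi_2(S_2)$ that the monodromy groups factor as $\cN_{(x_1,x_2)}=\cN^1_{x_1}\times\cN^2_{x_2}$ inside $Z(\gg^1_{x_1})\times Z(\gg^2_{x_2})$. Since property $\cC$ of an integration is governed by intrinsic data that are all multiplicative over the product -- compactness and holonomy of the leaves (for proper type), together with lattice-type conditions on the monodromy groups and compactness of the isotropy groups (for s-proper and compact type) -- each of these conditions holds for the warped product exactly when it holds for both factors, and from the factored data one can then build the required factor integrations with property $\cC$. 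The delicate point to be handled carefully is precisely this passage from the existence of a property-$\cC$ integration of the product to the factored statement about leaves and monodromy, for which the cleanest route is to phrase $\cC$-type via these intrinsic invariants (as developed in Section \ref{Poisson-homotopy-groups}) rather than via the given integration $\G$.
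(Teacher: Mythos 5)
Your product lemma, the forward implication via the displayed integration \eqref{eq:warped}, and the full strong statement via $\Sigma(M_1\ltimes_c M_2)=\Sigma(M_1)\times\Sigma(M_2)$ all match what the paper intends: the corollary is stated there as an immediate consequence of exactly those two facts about product integrations, and you have correctly located the one implication that they do \emph{not} cover, namely the non-strong converse (some $\cC$-type integration of the warped product $\Rightarrow$ each factor is of $\cC$-type). Your observation that an arbitrary s-connected integration of the product is a quotient of $\Sigma(M_1)\times\Sigma(M_2)$ by a discrete central bundle $N$ whose projections $\pr_i(N)$ may fail to be discrete is exactly the right diagnosis of why one cannot simply ``factor'' the given integration.

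The gap is that your proposed repair does not close this case with the tools available in the paper. What Section \ref{Poisson-homotopy-groups} provides is a characterization of the compactness of the Poisson homotopy group $\Sigma_x(M,\pi)$ (finite $\pi_1(S)$, compact-type $\gg_x$, $\cN_x$ a lattice), which together with the proposition preceding it governs the \emph{strong} types; it says nothing about when \emph{some} s-connected integration is proper or s-proper. Lemma \ref{lemma-gen-pr-gpds} gives only \emph{necessary} conditions on leaves, isotropy and holonomy. The sentence ``from the factored data one can then build the required factor integrations with property $\cC$'' is therefore not a reduction but a restatement of the entire problem: it asserts the \emph{sufficiency} of an intrinsic characterization of (non-strong) proper/s-proper type, which is precisely the content of the normal-form and leaf-space results the authors defer to \cite{CFMb,CFMc}. (A small further wrinkle: with the warping, the monodromy of the second factor appears rescaled by $c(x_1)$, so the factorization $\cN_{(x_1,x_2)}=\cN^1_{x_1}\times\cN^2_{x_2}$ needs adjusting, though this does not affect lattice-type conditions.) The compact case does reduce cleanly, since compactness of $\G$ forces $M_1\times M_2$ compact and s-properness, but the proper and s-proper converses remain open in your write-up. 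In fairness, the paper itself offers no argument for this direction either; but as a proof your proposal is incomplete exactly at the step you flagged as delicate.
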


We illustrate Corollary \ref{cor:warped} with the following:
\begin{example}\label{ex:product}
Consider the product Poisson manifold $(S\times \mathfrak{g}^*,\w\oplus \pi_{{\lin}})$, 
where $(S,\w)$ is a symplectic manifold. Then we see that $S\times \mathfrak{g}^*$ is:
\begin{itemize}
\item of proper type iff $\gg$ is a Lie algebra of compact type;
\item of s-proper type iff $\gg$ is a Lie algebra of compact type and $S$ is compact;
\item of strong proper type iff $\gg$ is a compact semisimple Lie algebra and $S$ has finite fundamental group;
\item of strong s-proper type iff $\gg$ is a compact semisimple lie algebra and $S$ is compact and has finite fundamental group;
\item never of compact type.
\end{itemize}
\end{example}

Corollary \ref{cor:warped} also holds in the Dirac setting. The warped product of Dirac structures is defined in a manner analogous to that
of Poisson structures. If we consider integrable Dirac structures, then allowing in (\ref{eq:warped}) the factors to
 be presympectic groupoids we obtain a presymplectic groupoid integrating the warped product. In Example \ref{ex:product} if we replace
 $\w$ by a presymplectic form leads to the same conclusions.

\subsection{Submanifolds} Recall that a Poisson submanifold of $(M, \pi)$ is a submanifold
$N\subset M$ such that $\pi_x\in \wedge^2T_xN$, for all $x\in N$. Equivalently, the Poisson bracket on $M$ descends to $N$ via the restriction map $C^{\infty}(M)\to C^{\infty}(N)$. In this case $N$ intersects the symplectic leaves $S$ of $M$ in opens inside the leaf, and the symplectic leaves of $N$ are the connected components of these intersections. One says that $N$ is saturated if each of its leaves is also leaf of $(M, \pi)$.

The integrability and/or properness of Poisson submanifolds is a subtle issue. Already in the case of the symplectic $\S^2$, which is of
strong compact type, removing a point gives rise to an open $U\subset \S^2$, which is a Poisson submanifold, 
but not of strong compact type. It is still of strong proper type, but even this fails if we remove two points instead. 
A more interesting phenomenon occurs in the case of the sphere $\S_{\gg^*}$ inside the dual $\gg^*$ of a simple Lie algebra
of compact type (with respect to the metric induced by the Killing form): this is a saturated Poisson submanifold and
while $\gg^*$ is of strong s-proper type, the sphere $\S_{\gg^*}$ is never integrable, except when $\gg=\mathfrak{su}(2)$. 

For a general Poisson submanifold $N$ of $(M, \pi)$, 
the restriction $T^{*}_{N}M$ of the cotangent algebroid to $N$ is a new Lie algebroid,
and the restriction $r: T^{*}_{N}M\rightarrow T^*N$ is a Lie algebroid morphism.
Denoting by  $\Sigma_{N}(M)$ the canonical groupoid associated to 
$T^{*}_{N}M$, it follows that there is a diagram of groupoid morphisms \cite{CF2}
 \[\begin{xy}
 (40,20)*+{\Sigma_N(M)}="bb";(60,20)*+{\Sigma(M)|_{N}}="a";%
 (60,10)*+{\Sigma(N)}="dd";% 
 {\ar "bb";"a"};{\ar^{r} "bb"; "dd"};
 \end{xy}
 \]
Assuming that $(M, \pi)$ is integrable, then $T^{*}_{N}M$ is integrable as well, since $\Sigma(M)|_{N}$ integrates it.
Hence, the horizontal map becomes a smooth morphism, but it may fail to be an isomorphism since
by restricting $\Sigma(M)$ to $N$ one may destroy the 1-connectedness of the s-fibers.
This problem does not arise when $N$ is saturated. Therefore, assuming that $N$ is saturated and $(M, \pi)$ is of strong proper type,
we see that $\Sigma_{N}(M)\cong\Sigma(M)|_{N}$ is proper. However, $N$ may still fail to be integrable.

We see that the most favorable situation is when $(M, \pi)$ is of strong proper type, $N$ is saturated, and $\Sigma(N)$ smooth
and Hausdorff. Then, since $\Sigma(N)$ is a quotient of $\Sigma(M)|_{N}$, one obtains that $N$ is of strong proper type. 
This situation is rather exceptional but, as we shall prove in \cite{CFMc}, PMCTs admit two canonical stratifications whose 
strata are saturated \emph{regular} Poisson submanifolds which fit into our discussion, i.e, they are PMCTs as well. 

One can also consider other kinds of submanifolds in Poisson Geometry. For example, in the case of the so-called 
Poisson-Lie submanifolds $N\subset M$ (see \cite{CF2}) one has an embedding $\Sigma(N)\to \Sigma(M)$, and so it is easier to
find conditions under which $\cC$-types are inherited. Examples of Poisson-Lie submanifolds 
include the fixed point sets of proper Poisson actions and Poisson transversals \cite{FM}.

\subsection{Quotients}

Let $(M,\pi)$ be a Poisson manifold and let $G\times M\to M$ be an action of a Lie group
$G$ by Poisson diffeomorphisms. If the action is free and proper, then $M/G$ inherits a 
unique Poisson structure $\pi_\red$ for which the quotient map $M\to M/G$ is a Poisson map. We refer to $(M/G,\pi_\red)$ as the {\bf Poisson
reduced space}.

First, we recall from \cite{FOR} that if $(M,\pi)$ is integrable,
then there is a lifted action $G\times\Sigma(M)\to\Sigma(M)$ by symplectic groupoid
automorphisms, which is Hamiltonian with a moment map $J:\Sigma(M)\to \gg^*$ which 
is a groupoid cocycle:
\[ J(g_1 g_2)=J(g_1)+J(g_2), \quad (g_1,g_2)\in\G^{(2)}.\]
This cocycle is exact iff $G$ acts on $(M,\pi)$ in a Hamiltonian fashion. More precisely, $J=s^*\mu-t^*\mu$, where $\mu$ is a moment
map for the action. In such a situation, if $(\G,\Omega)$ is an s-connected
symplectic integration of $(M,\pi)$, then $J$ descends to a map $J_\G:
\G\rightarrow \gg^*$ which is both a groupoid cocycle and a Poisson morphism. So, if, additionally, $G$ is connected, its action
lifts to a Hamiltonian action on $(\G,\Omega)$ by groupoid automorphisms. 
Moreover, if the $G$-action on $M$ is free and proper, so is any of the lifted actions. 

In this case the {\bf symplectic quotient}
 \[(\G//G,\Omega_{\red}):=(J_\G^{-1}(0)/G,\Omega_{\red})\]
 is a symplectic integration of the 
Poisson reduced space $(M/G,\pi_\red)$. In general, this Lie groupoid need not be s-connected and, if it is, it may not be source 1-connected.
We know that upon passing to the s-connected groupoid properness may be lost. Therefore, we can only conclude:

\begin{proposition}\label{Poiss-quot-proper-type}
Let $(M,\pi)$ be a Poisson manifold and let $G\times M\to M$ be a free and proper action 
of a Lie group $G$ by Poisson diffeomorphisms. If $(M,\pi)$ is
\begin{enumerate}[(a)]
 \item either of strong s-proper (respectively, strong compact) type,
 \item or of s-proper (respectively, compact) type and $G$ is a connected group acting on $M$ in a Hamiltonian fashion,
\end{enumerate}
then the reduced space $(M/G,\pi_\red)$ is of s-proper (respectively, compact) type.
\end{proposition}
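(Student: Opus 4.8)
The plan is to realize $(M/G,\pi_\red)$ as the base of an explicit symplectic groupoid obtained by symplectic reduction, and then to control its source fibres by hand. In both cases (a) and (b) the discussion preceding the statement supplies an s-connected symplectic integration $(\G,\Omega)$ of $(M,\pi)$ enjoying the property $\cC$ (s-proper, respectively compact), together with a free and proper Hamiltonian $G$-action by groupoid automorphisms with moment map $J_\G:\G\to\gg^*$, and such that $(\G//G,\Omega_\red)=(J_\G^{-1}(0)/G,\Omega_\red)$ is a symplectic integration of $(M/G,\pi_\red)$. In case (a) I would take $\G=\Sigma(M)$, the canonical (strong) integration, whose lifted free proper Hamiltonian action and moment cocycle $J$ are furnished directly by \cite{FOR}; in case (b) I would take the given s-connected integration $\G$, using the connectedness of $G$ and the Hamiltonian hypothesis to descend $J=s^*\mu-t^*\mu$ to $J_\G$ and to lift the action. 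After this step the two cases coincide, so I would treat them uniformly.

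The core computation is to identify the source fibres of $\G//G$. Since $G$ acts by groupoid automorphisms, its action on $\G$ covers the action on $M$, so the source map of $\G//G$ is $\bar s([\gamma])=[s(\gamma)]$; using freeness of the $G$-action on $M$, I would show that the fibre of $\bar s$ over $Gx$ is canonically $s^{-1}(x)\cap J_\G^{-1}(0)$, a closed subset of the source fibre $s^{-1}(x)$ of $\G$. If $\G$ is s-proper these fibres are compact, and if $\G$ is compact then $J_\G^{-1}(0)$ is closed in $\G$, hence compact, so $J_\G^{-1}(0)/G$ is compact. To upgrade compact fibres to properness of $\bar s$, I would lift a compact $\bar K\subset M/G$ to a compact $K\subset M$ with $q(K)=\bar K$ (possible since $M\to M/G$ is a principal $G$-bundle) and verify the identity $\bar s^{-1}(\bar K)=q_\G(s^{-1}(K)\cap J_\G^{-1}(0))$; the right-hand side is the continuous image of a compact set and hence compact, so $\bar s$ is proper.

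At this point $\G//G$ is a symplectic integration of $(M/G,\pi_\red)$ with property $\cC$, but it need not be s-connected, which is precisely the difficulty flagged before the statement. The main obstacle is thus to replace it by its s-connected subgroupoid $(\G//G)^0$ without destroying $\cC$, and here I would invoke Ehresmann's theorem: as $\bar s$ is a proper submersion it is a locally trivial fibration, so the union $(\G//G)^0$ of the identity components of the source fibres is an \emph{open and closed} subgroupoid. Being clopen, $(\G//G)^0$ inherits properness of the restricted source map in the s-proper case, and in the compact case it is closed in the compact groupoid $\G//G$, hence compact. As an open subgroupoid it still carries the multiplicative symplectic form $\Omega_\red$ and integrates $T^*(M/G)$, so it is the desired s-connected symplectic integration of $(M/G,\pi_\red)$ of s-proper (respectively compact) type. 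I expect this clopen-ness, which has no counterpart for the anchor map, to be the crux of the argument and the reason the proposition is stated only for the s-proper and compact cases and not for the \emph{proper} type.
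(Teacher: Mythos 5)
Your proposal is correct and follows essentially the same route as the paper, which establishes the proposition through the discussion immediately preceding it: form the symplectic quotient $(\G//G,\Omega_{\red})=(J_\G^{-1}(0)/G,\Omega_{\red})$ from a $\cC$-type integration and then deal with the possible failure of source-connectedness. The verifications you supply --- identifying $\bar s^{-1}(Gx)$ with $s^{-1}(x)\cap J_\G^{-1}(0)$, checking properness of $\bar s$ by lifting compacta, and the clopen-ness of $(\G//G)^0$ via Ehresmann (which is indeed why the statement covers only the s-proper and compact types) --- are exactly the details the paper leaves implicit.
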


\begin{example}
Consider $\R\times \T^{2n-1}$ with the symplectic form which pulls back to the standard one in $\R^{2n}$. According to Example
\ref{ex:product} this
is an s-proper Poisson manifold. The $\S^1$-action by translations on the second factor is a free and proper Hamiltonian action.
Therefore  $\R\times \T^{2n-2}=(\R\times \T^{2n-1})/\S^1$ is a Poisson manifold of s-proper type.
\end{example}

\begin{example}
Consider the lifted cotangent action of a Lie group $G$ on its cotangent bundle $(T^*G,\omega_\can)$. This is a Poisson action which
is proper and free, and the Poisson reduced space $T^*G/G$ is isomorphic to $\gg^*$ with its linear Poisson structure. 
Note that $T^*G$ being symplectic, is always proper, and we can make it even strong proper if we choose $G$ to be 1-connected. 
On the other hand, as we saw in Section \ref{sec:linear:Poisson}, $\gg^*$ is proper if and only if $\gg$ is of compact type.
\end{example}

Neither properness nor strong properness/compactness pass to quotients in general. We need to work in a more restricted setting to guarantee that such properties descend (e.g., requiring $G$ to be a compact).

\subsection{Reductions of Hamiltonian (pre)symplectic spaces}
\label{sub-sub:Hamiltonian}
Let $(Q,\w)$ be a symplectic manifold and let $G$ be a connected Lie group acting in a Hamiltonian fashion with an equivariant moment map
$\mu: Q\to \gg^*$. We shall refer to $(Q,\w,G,\mu)$ as a {\bf Hamiltonian $G$-space}. Having a symplectic manifold with a moment map, one gets much more detailed information about the Poisson reduced space $(M=Q/G,\pi_\red)$, including an explicit description of a symplectic integration. 
% 
% 
% Another interesting class of Poisson structures are quotients of symplectic manifolds. Whenever $G$ is a Lie group acting
% freely and properly on a symplectic manifold $(Q, \omega)$ (hence $\omega$ is $G$-invariant), the quotient 
% \[ M= Q/G \]
% carries a canonical Poisson structure $\pi_{M}$, uniquely characterised by 
% the condition that the projection $p: Q\rmap M$ is a Poisson map.
% % That means that the pull-back $p^{*}C^{\infty}(M)\to C^{\infty}(P)$ (which is basically the inclusion of the subspace of invariant functions)
% % preserves the Poisson brackets- therefore the uniqueness of $\pi_M$, and also the existence (because of the invariance of $\omega$). 
% 
% The question of whether the resulting Poisson manifold $(M, \pi_M)$ is a PMCT is rather subtle in general. However,
% there are a few cases for which the answer is positive. One such case is that of Hamiltonian actions. If
% % the $G$-symplectic manifold $(Q, \omega)$ admits a moment map, i.e. a smooth map
% \[ \mu: Q\rmap \gg^*\]
% is a moment map for the action, then 
% the symplectic leaves of $M$ can easily be described in terms of $\mu$:
% 
% with the property that
% \begin{equation}\label{moment-map-cond} 
% i_{\rho(v)}(\omega)= d\mu_{v} \ \ \ \forall\ v\in \gg,
% \end{equation}
% where $\gg$ is the Lie algebra of $G$ and, for $v\in \gg$, the vector field $\rho(v)$ on $M$ is the associated infinitesimal generator
% and $\mu_v\in C^{\infty}(Q)$ is obtained by evaluating the values of $\mu$ on $v$.
% The Hamiltonian condition allows one to describe the symplectic leaves of $M$ in terms of the moment map $\mu$:

First, recall that the symplectic leaves of $(M,\pi_\red)$  are the symplectic quotients:
\[ \mu^{-1}(\xi)/G_{\xi}\cong \mu^{-1}(\mathcal{O}_{\xi})/G \subset Q/G= M,\]
where $\mathcal{O}_{\xi}$ is the coadjoint orbit through $\xi\in\gg^*$ and $G_\xi$ the isotropy group at $\xi$ for the coadjoint action of $G$. 
% Therefore, in the Hamiltonian case, we will refer to $(M, \pi_M)$ as
% \textbf{the (Poisson) reduction of the Hamiltonian space $(Q, \omega)$}. Recall that the leaf correspnding
% to $\xi= 0$ is called \textbf{the symplectic quotient} of $(Q, \omega)$ and denoted
% \[ Q//G:= \mu^{-1}(0)/G .\]
Thus, the leaf space of $(M, \pi_\red)$ is homeomorphic to the leaf space $\gg^*/G$ of the coadjoint action. 
Also, the isotropy Lie algebras of $(M, \pi_\red)$ are isomorphic to the isotropy Lie algebras $\gg_{\xi}$, $\xi\in \gg^*$. 
Hence, one can expect that properness of $(M, \pi_\red)$ will hold, provided one requires that $G$ be compact. In fact:

\begin{proposition}\label{Ham-quot-proper-type} Let $(Q,\w,G,\mu)$ be a Hamiltonian $G$-space and assume that $G$ is compact and acts freely.
Assume further that $Q$ is connected and the fibers of the moment map are connected (respectively, 1-connected). Then:
\begin{enumerate}[(i)]
\item $(M, \pi_\red)$ is proper (respectively, strong-proper);
\item $(M, \pi_\red)$ is s-proper (respectively, strong s-proper) if the moment map is proper; 
\item $(M, \pi_\red)$ is of compact (respectively, strong-compact) type iff $G$ is a finite group (hence $M$ is symplectic). 
\end{enumerate}
\end{proposition}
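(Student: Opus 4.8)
The plan is to produce one explicit symplectic integration of $(M,\pi_\red)$ by symplectic reduction of the pair groupoid of $Q$, and then to read off every compactness property from its two basic invariants: the source-fibers and the isotropy groups. Since $(Q,\w)$ is symplectic it is integrated by the pair groupoid $Q\times Q$ (with $\Omega=\pr_1^*\w-\pr_2^*\w$), and the diagonal Hamiltonian $G$-action has moment cocycle $J=s^*\mu-t^*\mu$, so $J^{-1}(0)=Q\times_\mu Q=\{(x,y):\mu(x)=\mu(y)\}$. Applying the reduction procedure recalled in Section \ref{sub-sub:Hamiltonian} with $\G=Q\times Q$, the symplectic quotient
\[ \G:=(Q\times_\mu Q)/G \tto M \]
is a symplectic integration of $(M,\pi_\red)$. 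Because $G$ acts freely on $Q$, it acts freely on $Q\times Q$, so $0$ is a regular value of $J$ and $\G$ is a Hausdorff manifold; moreover $Q\times_\mu Q=(\mu\times\mu)^{-1}(\Delta)$ is closed, so $\G$ is a closed, wide Lie subgroupoid of the gauge groupoid $(Q\times Q)/G$ of the principal bundle $Q\to M$.

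First I would compute the source-fibers and isotropy. Normalizing the second entry of a representative by the free $G$-action yields a diffeomorphism $s^{-1}([y_0])\cong\mu^{-1}(\mu(y_0))$ and, by the same device, an isomorphism $\G_{[x_0]}\cong G_{\mu(x_0)}$ with the coadjoint isotropy group (compact, since $G$ is). This single computation drives everything: $\G$ is s-connected exactly under the standing hypothesis that the $\mu$-fibers are connected, and its s-fibers are $1$-connected exactly when the $\mu$-fibers are. In the latter case $\G$ is source $1$-connected, so by uniqueness of the source $1$-connected integration $\G\cong\Sigma(M)$ — which is precisely what the \emph{strong} statements demand.

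For (i) and (ii) it then suffices to check the relevant compactness of $\G$. The anchor of the gauge groupoid $(Q\times Q)/G$ is a fiber bundle with fiber $G$, hence proper because $G$ is compact; as $\G$ is a closed wide subgroupoid, its anchor is the restriction of a proper map to a closed set, so $\G$ is proper. This gives proper type, and strong-proper type when the $\mu$-fibers are $1$-connected. For (ii), if $\mu$ is proper then the second projection $Q\times_\mu Q\to Q$ is proper (the preimage of a compact $C$ sits inside $\mu^{-1}(\mu(C))\times C$), and descending to the quotient shows $s\colon\G\to M$ is proper; hence $\G$ is s-proper, and strong s-proper in the $1$-connected case.

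The substance of the proposition is the forward direction of (iii), and this is where I expect the main obstacle, since \emph{compact type} is not a transverse nor Morita-invariant notion and so cannot be detected on the source-fibers alone. The key is a global argument. If $M$ is of compact type, then some s-connected symplectic integration is a compact manifold, whence its base $M$ is compact; since $Q\to M$ is a principal bundle with compact structure group, compactness of $M$ forces $Q$ to be compact. Now I would invoke the classical fact that a Hamiltonian action of a positive-dimensional compact group on a compact symplectic manifold must have fixed points (the moment component of any circle subgroup is a function on a compact manifold, and its extrema are fixed points), which contradicts freeness of the $G$-action unless $\dim G=0$, i.e. $G$ is finite. Conversely, when $G$ is finite one has $\gg=0$, so $\mu$ is constant, $\pi_\red$ is non-degenerate, and $M$ is a genuine symplectic manifold; being of (strong) compact type then reduces to the symplectic case of Section \ref{sec:ex:symplectic}, namely compactness of $M$ together with finiteness of $\pi_1(M)$ (which holds because $Q\to M$ is then a finite — in the $1$-connected case, universal — covering).
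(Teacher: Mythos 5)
Your construction is exactly the paper's: you build the symplectic groupoid $(Q\times_\mu Q)/G$ by reducing the pair groupoid of $Q$, identify its $s$-fibers with the $\mu$-fibers and its isotropy with the coadjoint stabilizers, and settle (iii) by the classical fixed-point argument for Hamiltonian actions on compact symplectic manifolds. You in fact supply more detail than the paper does on the properness of the anchor and of the source map (the paper simply asserts that (i) and (ii) follow from the connectivity of the $s$-fibers), so the proposal is correct and follows essentially the same route.
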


\begin{proof}
First, we construct a symplectic groupoid which integrates $(M, \pi_\red)$. We start with the subgroupoid $Q\times_{\mu} Q\tto Q$
of the pair groupoid of $Q$ consisting of pairs $(q_1, q_2)$ with $\mu(q_1)= \mu(q_2)$.  The Lie group $G$ acts diagonally on this subgroupoid by groupoid automorphisms and the action is proper and free. Hence, the quotient yields the Lie groupoid:
\[
\mathcal{G}:= (Q\times_{\mu} Q)/G \tto Q/G= M,
\]
Now $\G$ carries a multiplicative symplectic form $\Omega$: the form
\[
 \textrm{pr}_{1}^{*}\omega - \textrm{pr}_{2}^{*}\omega \in \Omega^2(Q\times_{\mu}Q)
\]
 is basic with respect to the diagonal action of $G$ and 
hence descends to a 2-form $\Omega$ on $\mathcal{G}$. Since this form is closed and multiplicative, so is $\Omega$. 
One can check directly that $\Omega$ is non-degenerate. Actually, $(\mathcal{G}, \Omega)$ is a symplectic quotient: the diagonal action
of $G$ on $(Q\times Q, \omega\oplus-\omega)$ is Hamiltonian with moment map
\[ \widetilde{\mu}: Q\times Q\to \gg^*, \quad \widetilde{\mu}(q_1, q_2)= \mu(q_1)- \mu(q_2). \]
By construction
\[ \mathcal{G}= \widetilde{\mu}^{-1}(0)/G= (Q\times Q)//G\]
and $\Omega$ is the reduced symplectic form. 

One easily checks  that $(\mathcal{G}, \Omega)$ integrates $(M , \pi_\red)$ since the target map is a Poisson map.
When $Q$ is connected and the fibers of the moment map are connected (respectively, 1-connected),
the s-fibers of $(\mathcal{G}, \Omega)$ are connected (respectively, 1-connected), so items (i) and (ii) follow.
Finally, item (iii) follows from the fact that Hamiltonian actions of non-finite Lie groups on compact symplectic manifolds are never free. 
\end{proof}

A similar discussion applies in the presymplectic case, when one looks at a compact Lie group
$G$ acting in a free, proper, and Hamiltonian fashion  on a presymplectic manifold $(Q, \omega)$. Here the Hamiltonian condition
is the existence of an equivariant moment map $\mu\colon Q\rightarrow \gg^*$ satisfying:
\begin{equation}\label{moment-map-cond}
 \d\mu_v=i_{\rho(v)}\w.
\end{equation}
Due to the non-degeneracy of $\omega$, this condition does not determine the infinitesimal generators $\rho(v)$. Still, on the quotient
$M= Q/G$ one now has the the push-forward Dirac structure $L_{\mathrm{red}}= p_*(L_{\omega})$. In this case, we
call $(M, L_{\mathrm{red}})$ the \textbf{Dirac reduced space} of the Hamiltonian presymplectic space $(Q, \omega)$.
Its  presymplectic leaves are the presymplectic reduced spaces $\mu^{-1}(\xi)/G_{\xi}$. Proposition \ref{Ham-quot-proper-type} and its proof
still hold, but with ``Poisson" replaced by ``Dirac" and ``symplectic'' replaced by ``presymplectic''.  We refer to \cite{BF,BCWZ}
for more details on Dirac reduction and integrability.

The case of ($\phi$-)twisted Dirac structures requires a bit more care. We now deal with $(Q, \omega)$, where $\omega$ is a 2-form 
such that $\d\omega=\phi$. The Lie group $G$ acts in a free, proper, and Hamiltonian fashion (see \cite{BCWZ}).  
The moment map condition (\ref{moment-map-cond}), combined with the invariance of $\omega$, imply that $\phi= d\omega$ is basic and so it descends to a 3-form $\phi_M$ on $M$ which is closed (but, in general, no longer exact). The push-forward $L_{\mathrm{red}}= p_*(L_{\omega})$ is now a $\phi_M$-twisted Dirac structure and the analogue of Proposition \ref{Ham-quot-proper-type} holds.

The previous discussion can be generalized in various directions, whenever some notion of  ``generalized moment map'' is available.
This  encompasses the case of Poisson-valued moment maps of \cite{MiWe}, including the particular case of moment maps with values 
in the dual of a Poisson Lie group \cite{WeLu}, or the Lie group valued moment map of quasi-Hamiltonian spaces \cite{AMM}. The latter 
are specially interesting, since they will allow us to produce examples of Poisson manifolds of strong-compact type which are not symplectic. Moreover, in the next paper in this series \cite{CFMb} we will consider even more general versions of Hamiltonian spaces, which play a crucial role in the relationship between PMCTs, \emph{complete isotropic realizations} and \emph{symplectic gerbes}.

\subsection{Circle actions and $\S^1$-quasi Hamiltonian spaces}
\label{sub-sub:Quasi-Hamiltonian}
We extend the discussion of the previous section to $\S^1$-quasi Hamiltonian spaces. Historically, these were the first
examples of ``generalized moment maps''. They appeared in the work of McDuff on fixed points of symplectic
circle actions. We can use them to produce Poisson manifolds of strong-compact type, and 
this will bring us to a classical problem in Symplectic Topology: do there exist closed symplectic manifolds 
with symplectic free circle actions whose orbits are contractible? (see \cite{McDS}, pp. 152).

Recall that an $\S^1$-action on a (connected) symplectic manifold $(Q, \omega)$ is called {\bf quasi-Hamiltonian} if it admits an 
equivariant $\S^1$-valued moment map, i.e., an $\S^1$-invariant map $\mu: Q\to \S^1$
satisfying 
\[
 i_X(\omega)= \mu^*(\d\theta),
\]
where $X$ is the infinitesimal generator of the symplectic action and $\d\theta$ 
is the standard 1-form on $\S^1$. This is the  generalized moment map of \cite{McD}.
If the action is free,  then the exact same arguments as in the proof of Proposition \ref{Ham-quot-proper-type} yield
a symplectic groupoid integrating the Poisson reduced space $(Q/\S^1,\pi_\red)$:
\[ \mathcal{G}:= (Q\times_{\mu} Q)/\S^1 .\]
The s-fibers of $\G$ coincide with the fibers of $\mu$ and we deduce:

\begin{corollary}  
If $(Q, \omega,\S^1,\mu)$ is a connected, free quasi-Hamiltonian $\S^1$-space, and the moment map has connected (respectively, 1-connected) fibers, 
then the Poisson reduced space $(Q/\S^1,\pi_\red)$ is of compact (respectively, strong compact) type.
\end{corollary}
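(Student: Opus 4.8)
The plan is to extract all of the claimed compactness directly from the explicit integration
\[ \G := (Q\times_\mu Q)/\S^1 \tto Q/\S^1 \]
constructed just above, following the same argument used for Proposition \ref{Ham-quot-proper-type}. The one genuinely new ingredient, which is exactly what distinguishes this situation from Proposition \ref{Ham-quot-proper-type}(iii), is that the moment map now takes values in the \emph{compact} group $\S^1$ rather than in a noncompact dual $\gg^*$; this is what will let $\G$ be compact even though $\S^1$ is far from finite.

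First I would record the structural facts already furnished by the construction: $\G$ is a smooth, Hausdorff, symplectic groupoid integrating $(Q/\S^1,\pi_\red)$ whose $s$-fibers coincide with the fibers of $\mu$. Smoothness comes exactly as before from the observation that freeness forces the infinitesimal generator $X$ to be nowhere zero, so $i_X\omega=\mu^*(\d\theta)$ is a nowhere-vanishing $1$-form and $\mu$ is a submersion; then $Q\times_\mu Q=(\mu\times\mu)^{-1}(\Delta_{\S^1})$ is a submanifold of $Q\times Q$, the diagonal $\S^1$-action on it is free (since it is free on $Q$) and proper (since $\S^1$ is compact), and the quotient is a smooth Hausdorff manifold. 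The crucial step is then compactness of $\G$: since $Q$ is compact and $\S^1$ is Hausdorff, the diagonal $\Delta_{\S^1}$ is closed, so $Q\times_\mu Q$ is a closed subset of the compact manifold $Q\times Q$ and hence compact; its quotient $\G$ by the $\S^1$-action is therefore compact as well.

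With $\G$ compact in hand, the conclusion is immediate from Definition \ref{def:pmct}. If the fibers of $\mu$ are connected, then the $s$-fibers of $\G$ are connected, so $\G$ is an $s$-connected compact symplectic integration of $(Q/\S^1,\pi_\red)$, giving compact type. If moreover the fibers of $\mu$ are $1$-connected, then the $s$-fibers of $\G$ are $1$-connected, so $\G$ is source $1$-connected; by the uniqueness of the source $1$-connected integration it is isomorphic to the canonical integration $\Sigma(Q/\S^1,\pi_\red)$, which is thus smooth and compact, giving strong compact type. The only genuine obstacle is the compactness of $\G$, and once the target $\S^1$ and the manifold $Q$ are both compact this is forced; everything else is bookkeeping inherited from the construction of $\G$.
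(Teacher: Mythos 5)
Your proof is correct and follows essentially the same route as the paper: the paper simply invokes the groupoid $(Q\times_\mu Q)/\S^1$ built by ``the exact same arguments as in the proof of Proposition~\ref{Ham-quot-proper-type}'' together with the identification of its $s$-fibers with the $\mu$-fibers, and you reproduce that construction while supplying the one detail the paper leaves implicit, namely that the space of arrows is compact. The only caveat is that your compactness argument (correctly) uses that $Q$ is compact, a hypothesis not written in the corollary's statement but clearly intended from the surrounding discussion of closed symplectic manifolds, and without which the conclusion would fail.
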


How can one ensure that the fibers of the moment map are connected or even 1-connected? In this direction one has the following proposition,
showing that finding such examples is non-trivial since it relates to the aforementioned Symplectic Topology problem. 
% \newpage

\begin{proposition}  Given a free symplectic $\S^1$-action on a connected, compact symplectic manifold $(Q, \omega)$, one has that:
\begin{enumerate}[(i)]
\item in general, one can perturb and rescale $\omega$ to a new invariant symplectic form $\omega'$ such that 
$(Q, \omega')$ is a quasi-Hamiltonian space whose moment map $\mu$ has connected fibers, and
\item if the fibers of $\mu$ are 1-connected, then the orbits of the $\S^1$-action must be contractible.
The converse holds, provided the symplectic leaves of $Q/\S^1$ are 1-connected.
\end{enumerate}
\end{proposition}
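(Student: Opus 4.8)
The plan is to handle the two assertions separately, the first being the analytically substantive one. For (i), I would set $B=Q/\S^1$ and let $\pi\colon Q\to B$ be the principal $\S^1$-bundle of the free action, with infinitesimal generator $X$ and an invariant connection $1$-form $\alpha$ (so $\alpha(X)=1$ and $\d\alpha=\pi^*F$, where $F$ represents the \emph{real} Euler class $e\in H^2(B;\R)$). Since the action is symplectic, $\mathcal{L}_X\w=0$, so $i_X\w$ is closed; being horizontal and invariant it is basic, $i_X\w=\pi^*\beta$ with $\beta\in\Omega^1(B)$. Writing the invariant form as $\w=\alpha\wedge\pi^*\beta+\pi^*\sigma$ and imposing $\d\w=0$ gives $\d\beta=0$ and $F\wedge\beta=-\d\sigma$, so that $e\cup[\beta]=0$ in $H^3(B;\R)$. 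The action is quasi-Hamiltonian with moment map $\mu\colon Q\to\S^1$ precisely when $i_X\w=\mu^*\d\theta$, i.e. when $[i_X\w]$ lies in the image of $H^1(Q;\Z)$; and $\mu$ has connected fibers exactly when this class is primitive (equivalently $\mu_*\colon\pi_1(Q)\to\pi_1(\S^1)$ is onto).

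Next I would identify which classes are reachable by invariant symplectic perturbations and force the class to become integral. Perturbing by $\eta:=\alpha\wedge\pi^*\gamma+\pi^*\delta$, with $\gamma$ a closed $1$-form on $B$ satisfying $e\cup[\gamma]=0$ and $\delta$ chosen so that $\d\delta=-F\wedge\gamma$ (possible exactly because $F\wedge\gamma$ is then exact), produces a closed invariant $\eta$ with $i_X\eta=\pi^*\gamma$. For a small multiple $\w+\eta$ is again symplectic (nondegeneracy is open on the compact $Q$) and its class shifts by $\pi^*[\gamma]$. Hence the reachable classes fill a neighborhood of $[i_X\w]$ inside $V:=\pi^*\ker\!\big(e\cup\,\colon H^1(B;\R)\to H^3(B;\R)\big)$, which contains $[i_X\w]$. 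Since $e$ is \emph{integral}, $V$ is the real span of a $\mathbb{Q}$-subspace, so rational classes are dense in $V$; I would therefore perturb to an invariant symplectic $\w_1$ with $[i_X\w_1]\in V$ rational and nonzero, write this as $c\,m$ with $c\in\mathbb{Q}_{>0}$ and $m$ primitive integral, and rescale by $1/c$ to obtain $\w'$ with $[i_X\w']=m$. The action being free on a compact manifold cannot be Hamiltonian (a Hamiltonian circle action has fixed points), so $[i_X\w]\neq0$ and the nearby rational classes are nonzero, guaranteeing a genuine primitive target. As $i_X\w'$ never vanishes (free action, $\w'$ nondegenerate), $\mu$ is a submersion, hence by Ehresmann a fiber bundle with connected fibers. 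The main obstacle is precisely this rationality step: one must pin down the reachable classes and exploit the integrality of $e$ to reach a rational, then primitive integral, value.

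For (ii), the key observation is that the $\S^1$-orbits lie inside the fibers of $\mu$: from $(\mu^*\d\theta)(X)=\w'(X,X)=0$ we get $\d\mu(X)=0$, so $\mu$ is constant along orbits. Thus each fiber $F:=\mu^{-1}(y)$, $y\in\S^1$, is $\S^1$-invariant and contains the orbit through any of its points. If $F$ is $1$-connected, the orbit, being a circle in $F$, bounds a disk in $F\subset Q$, so the orbits are contractible; this is the forward implication. I would also recall that the symplectic leaves of $(Q/\S^1,\pi_\red)$ are the reduced fibers $S:=F/\S^1$ (the coadjoint action of the abelian group $\S^1$ is trivial, so $\S^1_\xi=\S^1$), giving a principal circle bundle $\S^1\to F\to S$.

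For the converse I would run two long exact homotopy sequences. From $\S^1\to F\to S$ with $\pi_1(S)=0$ the sequence $\pi_1(\S^1)\to\pi_1(F)\to\pi_1(S)=0$ shows $\pi_1(\S^1)$ surjects onto $\pi_1(F)$, so $\pi_1(F)$ is generated by the class of the orbit in $F$. From the moment-map fibration $F\to Q\xrightarrow{\mu}\S^1$, whose fibers are connected, together with $\pi_2(\S^1)=0$, the sequence $0\to\pi_1(F)\to\pi_1(Q)$ shows $\pi_1(F)\hookrightarrow\pi_1(Q)$. Contractibility of the orbits means the orbit is trivial in $\pi_1(Q)$; by injectivity it is then trivial in $\pi_1(F)$, and since it generates $\pi_1(F)$ we conclude $\pi_1(F)=0$. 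With connectedness of $F$ this yields $1$-connected fibers of $\mu$, completing the converse.
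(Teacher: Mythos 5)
Your part (ii) coincides with the paper's proof: the same two homotopy long exact sequences (for $\S^1\to F\to S$ and for $F\to Q\xrightarrow{\mu}\S^1$) are used in the same way, with the forward implication phrased slightly more directly (the orbit bounds a disk inside the $1$-connected fiber, rather than invoking injectivity of $\mu_*$). Part (i), however, takes a genuinely different route. The paper simply cites McDuff's Lemma~1 to perturb $\omega$ into a quasi-Hamiltonian form, and then repairs connectedness of the fibers by noting that the image of $\mu_*$ is $k\Z$ for some $k\geq 1$ (it cannot be $0$, else the action would be Hamiltonian and have fixed points), lifting $\mu$ through the covering $z\mapsto z^k$ and rescaling $\omega$ by $1/k$. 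You instead give a self-contained argument exploiting freeness: you decompose an invariant form on the principal bundle as $\alpha\wedge\pi^*\beta+\pi^*\sigma$, identify the reachable classes $[i_X\omega]$ as an open subset of the rational subspace $\ker(e\cup\cdot)\subset H^1(B;\R)$, use density of rational points there to land on a nonzero rational class, and rescale to make it primitive integral. This buys independence from McDuff's lemma (whose proof must also deal with fixed points, irrelevant here) at the cost of more bookkeeping; the paper's covering trick for primitivity is cleaner, but your density argument achieves quasi-Hamiltonicity and connectedness in one shot. One small wording slip: Ehresmann's theorem alone gives a fiber bundle over $\S^1$, which may well have disconnected fibers (e.g.\ $z\mapsto z^2$); connectedness comes from the primitivity of $[i_X\omega']$, which you do establish, so the conclusion stands.
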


\begin{proof} 
In order to prove (i), we invoke \cite[Lemma 1]{McD}  which allows us to assume that $(Q, \omega)$ is quasi-Hamiltonian 
(modulo small perturbations of $\omega$). In general, the fibers of its moment map $\mu: Q\to \S^1$ will not be connected (e.g.,  $\S^1\times \S^1$ with
the standard action of $\S^1$ on the first factor,
$\omega= 2d\theta_1\wedge d\theta_2$, and $\mu(z_1, z_2)= z_{1}^{2}$). However, this  can be easily fixed as follows.
The connectedness of the fibers is equivalent to the fact that 
\[ \mu_*: \pi_1(Q)\to \pi_1(\S^1)\cong \mathbb{Z}\]
is surjective. In general, the image of $\mu_{*}$ cannot be zero since otherwise $\mu$ would admit a
lift $\widetilde{\mu}: Q\to \mathbb{R}$ that would be a moment map of $(Q, \omega)$ in the classical sense, contradicting the freeness of the action.
Hence, the image is a cyclic group $k\mathbb{Z}$ with $k\geq 1$ an  integer, and this implies that $\mu$ admits a lift $\widetilde{\mu}: Q\to \S^1$
along the covering map $\S^1\to \S^1$, $z\mapsto z^k$. Then we just replace $\omega$ by $\omega'= \frac{1}{k}\omega$ (but keep the same $\S^1$-action!). 

Now, to prove (ii), assume that we have a free quasi-Hamiltonian action with moment map $\mu$ having connected fibers.
Note that $\mu$ descends to a fibration $\overline{\mu}: M\to \S^1$ and the symplectic leaves of $M$ are precisely the fibers of $\overline{\mu}$.
If the fibers of $\mu$ are 1-connected, then it  follows from the homotopy long exact sequence that $\mu_*: \pi_1(Q)\to \pi_1(\S^1)$ is an isomorphism.
Since any orbit loop in $Q$, i.e., any loop of type $\S^1\ni z\mapsto q\cdot z\in Q$,
is mapped by $\mu$ into the constant loop, the first claim follows. 

For the converse, fix a fiber $F$ of $\mu$. The homotopy long exact sequence associated to $\mu$ implies that
the canonical map $i_*: \pi_1(F)\to \pi_1(Q)$ is injective, so a loop inside $F$ is contractible in $F$ iff it is contractible
in $Q$. Now, the homotopy long exact sequence applied to $F\to F/\S^1$,
together with the extra-assumption that $F/\S^1$ is 1-connected, implies that every class in $\pi_1(F)$ is represented
by a multiple of an $\S^1$-orbit. But, by assumption, orbits are contractible in $Q$ and hence contractible in $F$.
This proves the 1-connectedness of the fiber $F$.
\end{proof}

The existence of free symplectic actions with contractible orbits (on compact manifolds) 
was finally proved by Kotschick in \cite{Ko}, building on results of \cite{FGM} to produce quasi-Hamiltonian $\S^1$-spaces. 
The construction relies on deep properties of the moduli space of marked hyper-K\"ahler K3 surfaces. 
Full details, emphasizing the Poisson nature of the construction, can be found in \cite{Mar},
which also takes care of some issues that were overlooked in \cite{Ko}. 

We deduce, in particular:

\begin{corollary}\label{K3} There exists a 5-dimensional regular Poisson manifold of strong compact type,
with symplectic leaves K3-surfaces fitting into a fibration over $\S^1$. 
\end{corollary}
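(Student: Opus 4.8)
The plan is to assemble the target from the two immediately preceding results together with Kotschick's existence theorem, so that no new hard analysis is required. First I would invoke \cite{Ko} (with the Poisson-theoretic refinements of \cite{Mar}, which build on \cite{FGM}) to produce a connected, compact symplectic six-manifold $(Q,\omega)$ carrying a free symplectic $\S^1$-action whose orbits are contractible and whose construction realizes K3 surfaces as the reduced symplectic leaves. Applying part (i) of the preceding Proposition, after a perturbation and rescaling of $\omega$ I may assume that $(Q,\omega)$ is a free quasi-Hamiltonian $\S^1$-space with moment map $\mu\colon Q\to\S^1$ having connected fibers; neither the rescaling $\omega\mapsto\tfrac{1}{k}\omega$ nor the lift of $\mu$ along a covering $z\mapsto z^k$ changes the $\S^1$-action, the contractibility of its orbits, or the diffeomorphism type of the connected reduced leaves, so the leaves of $M:=Q/\S^1$ remain K3 surfaces.

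Next I would upgrade connectedness of the fibers to $1$-connectedness. The reduced leaves of $M$ are exactly the fibers of the induced fibration $\overline{\mu}\colon M\to\S^1$, and these are K3 surfaces, which are simply connected. Since the orbits of the $\S^1$-action are contractible and the symplectic leaves of $M$ are $1$-connected, the converse statement in part (ii) of the preceding Proposition applies and shows that the fibers of $\mu$ are $1$-connected. At this stage the preceding Corollary on quasi-Hamiltonian reductions applies directly: a connected, free quasi-Hamiltonian $\S^1$-space with $1$-connected moment-map fibers has reduced space $(M,\pi_\red)$ of strong compact type.

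It remains to verify the structural claims. Since $\dim Q=6$ and the action is free, $M=Q/\S^1$ is a $5$-dimensional manifold. Its symplectic foliation $\cF_{\pi_\red}$ coincides with the fibers of the submersion $\overline{\mu}$, hence has constant rank $4$, so $(M,\pi_\red)$ is regular; its leaves are the K3 surfaces, and $\overline{\mu}$ organizes them into a fibration over $\S^1$, exactly as claimed.

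The only genuinely deep ingredient is the existence input from \cite{Ko,FGM,Mar}, which rests on fine properties of the moduli space of marked hyper-K\"ahler K3 surfaces; within the framework developed above the remainder is formal. The one point deserving care when writing the full proof is ensuring that the reduction used in part (i) of the Proposition preserves the K3 diffeomorphism type of the connected reduced leaves; this holds because the connected fibers produced there are precisely the connected components of the original fibers, whose reductions are the K3 leaves. I would expect this compatibility check, rather than any cohomological obstruction, to be the main thing to verify carefully.
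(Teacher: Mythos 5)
Your proposal is correct and follows essentially the same route as the paper: the paper deduces the corollary directly from Kotschick's construction (as refined in \cite{Mar}, building on \cite{FGM}) of a compact symplectic $6$-manifold with a free quasi-Hamiltonian $\S^1$-action whose reduced leaves are K3 surfaces, combined with the preceding Proposition and the Corollary on quasi-Hamiltonian reduction. Your use of the converse in part (ii) (contractible orbits plus simply connected K3 leaves implies $1$-connected moment fibers) is exactly the intended deduction, and the dimension/regularity checks are routine as you say.
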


\subsection{Fusion product}

The notion of quasi-Hamiltonian $\S^1$-space has a generalization for arbitrary Lie groups \cite{AMM}. Here we are only interested 
in the case where $G$ is a torus, where the generalization is straightforward:  a quasi-Hamiltonian $\T^n$-space is a $\T^n$-symplectic
manifold $(Q,\w)$ with a $\T^n$-invariant moment map $\mu:Q\rightarrow \T^n$ satisfying the map condition:
\[  i_{X}(\omega)= \mu^*(\d\theta_X), (i=1,\dots,n)\]
where $X\in\R^n$ is any element of the Lie algebra of $\T^n$ and $\d\theta_X$ is the right-invariant 1-form on $\T^n$ which at
the identity takes the value $Y\mapsto \langle Y,X\rangle$.

Exactly as in the case of circle actions, we have:
\begin{corollary}  
If $(Q, \omega,\T^n,\mu)$ is a connected, free quasi-Hamiltonian $\T^n$-space, and the moment map has connected (respectively, 1-connected) fibers,
then the Poisson reduced space $(Q/\T^n,\pi_\red)$ is of compact (respectively, strong compact) type.
\end{corollary}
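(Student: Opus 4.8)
The plan is to repeat verbatim the construction used for $\S^1$-quasi-Hamiltonian spaces (and, before that, in the proof of Proposition \ref{Ham-quot-proper-type}), replacing $\S^1$ by $\T^n$ throughout; the point will be that the $\T^n$-valued moment map enters only componentwise and so all computations assemble exactly as in the one-dimensional case. First I would form the fiber product $Q\times_\mu Q=\{(q_1,q_2)\in Q\times Q:\mu(q_1)=\mu(q_2)\}$, a subgroupoid of the pair groupoid $Q\times Q\tto Q$. The torus acts diagonally on it by groupoid automorphisms, and since the action on $Q$ is free and proper and $\T^n$ is compact, the quotient
\[ \mathcal{G}:=(Q\times_\mu Q)/\T^n\tto Q/\T^n=M \]
is again a Lie groupoid over the reduced space.

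Next I would produce the multiplicative symplectic form. The closed $2$-form $\pr_1^*\omega-\pr_2^*\omega$ on $Q\times_\mu Q$ descends to $\mathcal{G}$ provided it is basic for the diagonal action. Invariance is immediate from $\T^n$-invariance of $\omega$; the crucial point is horizontality. Contracting with the infinitesimal generator $(X_Q,X_Q)$ and using the quasi-Hamiltonian condition $i_{X_Q}\omega=\mu^*(\d\theta_X)$ gives
\[ i_{(X_Q,X_Q)}(\pr_1^*\omega-\pr_2^*\omega)=\pr_1^*\mu^*(\d\theta_X)-\pr_2^*\mu^*(\d\theta_X), \]
which vanishes on $Q\times_\mu Q$ precisely because $\mu\circ\pr_1=\mu\circ\pr_2$ there. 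Non-degeneracy of the descended form $\Omega$ is verified exactly as in Proposition \ref{Ham-quot-proper-type}, so $(\mathcal{G},\Omega)$ is a symplectic groupoid; it integrates $(M,\pi_\red)$ because its target map is Poisson.

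Then I would identify the source fibers. Over $[q_0]\in M$, using freeness of the action to normalize the second entry to $q_0$, the $s$-fiber is identified with $\{q_1:\mu(q_1)=\mu(q_0)\}=\mu^{-1}(\mu(q_0))$, a fiber of $\mu$. Hence, if the fibers of $\mu$ are connected then $\mathcal{G}$ is $s$-connected, and since $Q$ (hence $Q\times_\mu Q$, hence $\mathcal{G}$) is compact, $\mathcal{G}$ is a compact $s$-connected symplectic integration, so $(M,\pi_\red)$ is of compact type. If the fibers of $\mu$ are $1$-connected, then $\mathcal{G}$ is source $1$-connected, so $\mathcal{G}\cong\Sigma(M)$ is the canonical integration and is compact, giving strong compact type.

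The only step requiring genuine attention, rather than a symbol-for-symbol translation of the circle case, is the horizontality computation, since here it is the $n$ conditions $i_X\omega=\mu^*(\d\theta_X)$ that must combine to kill the contraction; but as the display above shows they do so for the same reason as when $n=1$, so no new obstacle arises and the corollary follows directly.
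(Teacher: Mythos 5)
Your proposal is correct and follows exactly the route the paper takes: it simply reruns the construction $\mathcal{G}=(Q\times_\mu Q)/\T^n$ from the proof of Proposition \ref{Ham-quot-proper-type} and the $\S^1$-case, with the horizontality check $i_{(X_Q,X_Q)}(\pr_1^*\omega-\pr_2^*\omega)=(\mu\circ\pr_1)^*\d\theta_X-(\mu\circ\pr_2)^*\d\theta_X=0$ on $Q\times_\mu Q$ being the only point where the torus-valued moment map condition actually enters, and the identification of the $s$-fibers with the fibers of $\mu$ giving the (1-)connectedness claims. Note only that, like the paper, you tacitly use compactness of $Q$ to get compactness of $\mathcal{G}$; this hypothesis is implicit in the surrounding discussion of compact quasi-Hamiltonian spaces.
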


Likewise,   the 1-connectivity of the fibers of the $\T^n$-valued moment map implies that the orbits of any circle subgroup of $\T^n$ are contractible.
The converse also holds provided the fibers of the moment map are connected and the leaves of the Poisson reduced space are 1-connected.

There is an operation on quasi-Hamiltonian spaces that allows to construct new Poisson manifolds of strong compact type,  namely
the {\bf fusion product} of two quasi-Hamiltonian spaces (see \cite{AMM}). Given quasi-Hamiltonian spaces  $(Q_1,\w_1,\T^n,\mu_1)$ 
and $(Q_2,\w_2,\T^n,\mu_2)$ its fusion is the quasi-Hamiltonian space 
\[ (Q_1\times Q_2,\mathrm{pr}_1^*\w_1-\mathrm{pr}_2^*\w_2,\T^n,\mu),\] 
where $\T^n$ acts diagonally with fusion moment map:
\[\mu_1\mu_2^{-1}: Q_1\times Q_2\rightarrow \T^n,\quad (x_1,x_2)\mapsto \mu_1(x_1)\mu_2^{-1}(x_2).\]
The fusion product is a fundamental operation that can be defined for the more general \emph{symplectic gerbes} in \cite{CFMc}.

In our setting, the moment-fibers of the fusion product and the leaves of its Poisson 
reduced space can be easily described:

\begin{proposition}\label{fusion-fibers} 
Let $(Q_1,\w_1,\T^n,\mu_1)$ and $(Q_2,\w_2,\T^n,\mu_2)$ be compact, connected
quasi-Hamiltonian spaces. If the action of $\T^n$ on $Q_1$ is free, then:
 \begin{enumerate}[(i)]
  \item the moment-fiber of the fusion product  is a fibration over $Q_2$ with fiber diffeomorphic to the $\mu_1$-fiber;
 \item each symplectic leaf of the fusion Poisson reduced space is a fibration over $Q_2$ with fiber 
 diffeomorphic to the fiber of $\overline{\mu_1}:M_1=Q_1/\T^n\to \T^n$.
\end{enumerate}
\end{proposition}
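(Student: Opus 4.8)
The plan is to use the freeness of the $\T^n$-action on $Q_1$ to promote $\mu_1$ to a locally trivial fibration, and then to obtain both statements from this single fibration by pullback and by quotient.

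First I would record that $\mu_1\colon Q_1\to\T^n$ is a submersion. Writing $\rho(X)$ for the fundamental vector field of $X\in\R^n$, the moment map condition reads $i_{\rho(X)}\w_1=\mu_1^*(\d\theta_X)$; by non-degeneracy of $\w_1$ and freeness (so $\rho(X)\neq 0$ for $X\neq 0$) the left-hand side is nonzero, whence $\d\mu_1$ is surjective. As $Q_1$ is compact, $\mu_1$ is a proper submersion, hence (Ehresmann) a locally trivial fibration with all fibers diffeomorphic to a single $\mu_1$-fiber $F_1$. Since $\T^n$ is abelian, $\mu_1$ is moreover $\T^n$-invariant, so it descends to a fibration $\overline{\mu_1}\colon M_1=Q_1/\T^n\to\T^n$, whose fiber is $F_1/\T^n$ (the action being free on $F_1\subset Q_1$). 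For (i), I would simply observe that the moment fiber of the fusion,
\[ \mu^{-1}(c)=\{(x_1,x_2): \mu_1(x_1)=c\,\mu_2(x_2)\}, \]
is the pullback of $\mu_1$ along $f=c\,\mu_2\colon Q_2\to\T^n$, so $\mathrm{pr}_2\colon\mu^{-1}(c)\to Q_2$ is identified with $f^*\mu_1$, a locally trivial fibration with fiber $F_1$.

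For (ii), I would first recall that, as recalled above, the symplectic leaves of $(Q/\T^n,\pi_\red)$ are the reduced spaces $L_c=\mu^{-1}(c)/\T^n$, and that the diagonal action is free (being free on the $Q_1$-factor), so $L_c$ is a manifold. The key observation is that $\mu^{-1}(c)$ carries an action of the whole $\T^n\times\T^n$, namely $(g,h)\cdot(x_1,x_2)=(gx_1,hx_2)$ (this preserves $\mu^{-1}(c)$ since $\mu_1,\mu_2$ are invariant), and the \emph{first} factor acts freely. The leaf is the quotient by the diagonal $\Delta\cong\T^n$, whereas the quotient by the first factor is the manifest bundle
\[ \mu^{-1}(c)/(\T^n\times\{e\})\;\cong\;Q_2\times_{\T^n}M_1\;\xrightarrow{\ \mathrm{pr}_2\ }\;Q_2, \]
whose fiber over $x_2$ is $F_1/\T^n=\overline{\mu_1}^{-1}(c\,\mu_2(x_2))$, precisely a fiber of $\overline{\mu_1}$. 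Since $\Delta$ and $\T^n\times\{e\}$ are complementary copies of $\T^n$ inside $\T^n\times\T^n$, exchanged by the group automorphism $(g,h)\mapsto(g,gh^{-1})$, and since both quotients have the same base $\mu^{-1}(c)/(\T^n\times\T^n)$ with the same isotropy pattern (that of the $\T^n$-action on $Q_2$), I would produce a diffeomorphism $L_c\cong Q_2\times_{\T^n}M_1$ and transport the fibration, yielding (ii).

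The main obstacle is exactly this last diffeomorphism: passing from the diagonal quotient (the leaf) to the first-factor quotient (the bundle over $Q_2$). Locally over $Q_2$ this is a straightforward \emph{gauge fixing}: one uses the free $Q_1$-action to normalize the first coordinate, thereby straightening the diagonal action into the first-factor action. The difficulty is to globalize this when the $\T^n$-action on $Q_2$ has nontrivial stabilizers, since then the fibers of the two quotients sit over points of different isotropy; I expect to handle this with the slice theorem for the proper $\T^n$-action on $Q_2$, checking that the local straightenings are compatible and that the automorphism relating $\Delta$ and $\T^n\times\{e\}$ is absorbed into the structure group of the fibration produced in (i).
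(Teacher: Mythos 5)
Part (i) of your proposal is correct and is essentially the paper's argument: the paper checks directly that $\pr_2$ restricted to the moment fibre is a surjective submersion (hence, by compactness, a locally trivial fibration with fibre the $\mu_1$-fibre), and your description of the moment fibre as the pullback $f^*\mu_1$ along $f=c\,\mu_2$ is the same computation. Your reduction of (ii) to comparing the diagonal quotient $\mu^{-1}(c)/\Delta$ (the leaf) with the first-factor quotient $\mu^{-1}(c)/(\T^n\times\{e\})\cong M_1\times_{\T^n}Q_2$ also matches the paper, which passes from one to the other via the assignment $(x_1,x_2)\mapsto([x_1],x_2)$ and then reruns the argument of (i) for $\overline{\mu}_1$. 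You have correctly isolated this identification as the crux of (ii).

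However, the step you defer is a genuine gap, and the mechanism you propose for closing it would fail. Two commuting free $\T^n$-actions on the same manifold $P$, with the same double quotient $P/(\T^n\times\T^n)$ and the same isotropy pattern for the residual actions, can have non-diffeomorphic quotients: for $P=\S^3\times\S^1$ with $\T^2$ acting by (Hopf, rotation), the first-factor quotient is $\S^2\times\S^1$ while the diagonal quotient is $\S^3$. So ``same base, same isotropy'' is not enough; the two quotients are principal $\T^n$-bundles (over the strata of the double quotient) whose isomorphism classes are additional data. Concretely, your local gauge fixing amounts to a local trivialization of the principal $\T^n$-bundle $\mu^{-1}(c)\to M_1\times_{\T^n}Q_2$, which is the pullback of $Q_1\to M_1$; the obstruction to making the straightenings compatible is the pulled-back Chern class of $Q_1\to M_1$, and this is not controlled by the slice theorem on $Q_2$. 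Also note that the group automorphism $(g,h)\mapsto(g,gh^{-1})$ of $\T^n\times\T^n$ merely renames the two subtori; it does not act on $\mu^{-1}(c)$ and so cannot be ``absorbed into the structure group''. For comparison, the paper's own proof performs this identification in one line (asserting $\mu^{-1}(e)/\T^n\cong\{([x_1],x_2):\overline{\mu}_1([x_1])=\mu_2(x_2)\}$, even though the displayed map is invariant under the first-factor action rather than the diagonal one), so you have put your finger on precisely the point where a genuine additional argument is needed; as written, your proposal does not supply it, and the only quotient maps that are manifestly well defined on the leaf are the fibration over $M_1$ with fibre the $\mu_2$-fibre and the map to $M_2=Q_2/\T^n$, neither of which is the fibration over $Q_2$ asserted in (ii).
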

\begin{proof}
The diagonal action on the fusion product is also free and therefore the moment map is a fibration.  The fiber over the unit 
\begin{equation}\label{e-fiber}\mu_1\mu_2^{-1}(e)=\{(x_1,x_2)\in Q_1\times Q_2\ |  \ \mu_1(x_1)=\mu_2(x_2)\}
 \end{equation}
is a submanifold of $Q_1\times Q_2$. We claim that the restriction of the second projection
\[\mathrm{pr}_2: \mu_1\mu_2^{-1}(e)\rightarrow Q_2\]
is a surjective submersion: the surjectivity of $\mathrm{pr}_2$ is an immediate consequence of the surjectivity of $\mu_1$. On the other hand, we have:
\[ T_{(q_1,q_2)}\mu_1\mu_2^{-1}(e)=\{(v,w)\in T_{q_1}Q_1\times T_{q_2}Q_2:\d_{q_1}\mu_1\cdot v-\d_{q_2}\mu_2\cdot w=0\}. \]
Given any $w\in T_{q_2}Q_2$, since $\mu_1$ is a submersion, we can find $v\in T_{q_1}Q_1$ such that $\d_{q_1}\mu_1\cdot v=\d_{q_2}\mu_2\cdot w$. This proves the surjectivity of the differential of $\mathrm{pr}_2$. By (\ref{e-fiber}), the fiber of $\mathrm{pr}_2$
is the $\mu_1$-fiber, so (i) holds.

In order to prove (ii), observe that the fusion moment map descends to the Poisson reduced space $Q_1\times Q_2/\T^n$ giving a fibration:
\[ \overline{\mu_1\mu_2^{-1}}: Q_1\times Q_2/\T^n\to \T^n. \]
The fibers of this map are the symplectic leaves of the Poisson reduced space. The fiber over the unit can be identified with:
\[ \left(\overline{\mu_1\mu_2^{-1}}\right)^{-1}(e)\cong \{([x_1],x_2)\in M_1\times Q_2\ |  \ \mu_1(x_1)=\mu_2(x_2)\}.\]
Since $\mu_1$ also descends to a fibration $\overline{\mu}_1: M_1\rightarrow \T^n$, the previous argument implies that the restriction of 
$\mathrm{pr}_2$ to the fiber over the unit is a fibration with fiber diffeomorphic to the fiber of $\overline{\mu}_1$.
 \end{proof}

Proposition  \ref{fusion-fibers} shows that out of a Poisson manifold of strong compact type coming form a free quasi-Hamiltonian $\T^n$-space with 
1-connected moment-fibers, we can produce plenty of Poisson manifolds of compact type:

\begin{corollary}\label{cor:fusion} 
Let $(M_1,\w_1,\T^n,\mu_1)$ and $(M_2,\w_2,\T^n\mu_2)$ be compact (connected) quasi-Hamiltonian spaces. 
Assume further that the action of $\T^n$ on $M_1$ is free, $\mu_1$ has 1-connected fibers, and $M_2$ is 1-connected. Then the Poisson
reduced space of the fusion product is a Poisson manifold of strong compact type.
 \end{corollary}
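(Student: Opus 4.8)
The plan is to reduce the statement to the earlier corollary on quasi-Hamiltonian $\T^n$-spaces, which guarantees that a connected, free quasi-Hamiltonian $\T^n$-space whose moment map has $1$-connected fibers has Poisson reduced space of strong compact type. Since the fusion product $(M_1\times M_2,\,\pr_1^*\omega_1-\pr_2^*\omega_2,\,\T^n,\,\mu_1\mu_2^{-1})$ is again a quasi-Hamiltonian $\T^n$-space by construction, it suffices to verify its three hypotheses: connectedness, freeness of the diagonal action, and $1$-connectedness of the fibers of the fusion moment map.

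Connectedness of $M_1\times M_2$ is immediate. For freeness, if $t\in\T^n$ fixes $(x_1,x_2)$ then $t\cdot x_1=x_1$, and since the $\T^n$-action on $M_1$ is free by hypothesis this forces $t=e$; hence the diagonal action is free (note that no freeness assumption on $M_2$ is needed). The substance of the proof is therefore concentrated in the $1$-connectedness of the moment fibers.

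To establish this I would invoke Proposition \ref{fusion-fibers}(i). Because the diagonal action is free, the fusion moment map is a fibration, so all of its fibers are diffeomorphic, and the fiber over the identity is the total space of a fibration over $M_2$, via $\pr_2$, with fiber diffeomorphic to a $\mu_1$-fiber $F_1$. Feeding this into the long exact homotopy sequence,
\[ \pi_1(F_1)\longrightarrow \pi_1\big((\mu_1\mu_2^{-1})^{-1}(e)\big)\longrightarrow \pi_1(M_2), \]
and using that $F_1$ is $1$-connected (the hypothesis on $\mu_1$) together with $\pi_1(M_2)=0$ (the hypothesis on $M_2$), the two outer groups vanish and hence so does the middle one. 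Connectedness of the total space follows in the same way from the connectedness of $F_1$ and of $M_2$. Thus the moment fiber over $e$, and by homogeneity every moment fiber, is $1$-connected.

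Having verified all three hypotheses, the cited corollary applies and delivers the conclusion that the Poisson reduced space of the fusion product is of strong compact type. The step I would scrutinize most carefully is the input from Proposition \ref{fusion-fibers}(i)—specifically that $\pr_2$ restricted to the moment fiber is a genuine fibration with the asserted fiber, so that the homotopy long exact sequence is legitimately available; once that is in hand, the remaining manipulations are routine bookkeeping with the sequence.
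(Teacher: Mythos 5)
Your proposal is correct and follows exactly the route the paper intends: the corollary is derived from Proposition \ref{fusion-fibers}(i) (the moment fiber of the fusion product fibers over $M_2$ with fiber a $\mu_1$-fiber), the homotopy long exact sequence of that fibration, and the earlier corollary on free quasi-Hamiltonian $\T^n$-spaces with $1$-connected moment fibers. The only detail you leave tacit is compactness of $M_1\times M_2$ (needed for \emph{compact} type of the groupoid $(Q\times_\mu Q)/\T^n$), which is immediate from the compactness hypotheses.
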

 
Note that while quasi-Hamiltonian spaces of the first type are hard to produce, there are plenty of quasi-Hamiltonian spaces of the second type:
any Hamiltonian $\T^n$-space becomes quasi-Hamiltonian upon composing the (classical) moment map with the exponential map of $\T^n$ (see \cite{AMM}).

\subsection{Gauge equivalences} 
\label{sub-sub:Gauge Equivalences} 
Two Poisson structures $\pi_1$ and $\pi_2$ on $M$ are said to be {\bf gauge equivalent} if they 
have the same foliation and there exists a closed 2-form $\a\in\Omega^2(M)$ such that for any leaf $S$,
the symplectic forms on $S$ induced by $\pi_1$ and $\pi_2$ differ by the pullback of $\a$ to $S$ (\cite{SW}).
% Rephrased directly in terms of the bivectors, the condition becomes rather techincal.
% The previous description,
% although simpler, may look a bit aritficial (e.g.: why should $B$ globally, and not just leafwise, closed). 
This condition becomes complicated when expressed in terms of the bivectors $\pi_i$. On the other hand, it is easy to express it in terms of 
the associated Dirac structures $L_{\pi_i}$. In fact, gauge equivalence finds its natural setup in Dirac geometry (\cite{BR}).

The key-remark  is that the space of Dirac structures on a manifold $M$ admits as symmetries the semidirect
product $\mathrm{Diff}(M)\ltimes \Omega^2_\mathrm{cl}(M)$, where the group $\mathrm{Diff}(M)$ acts on closed forms by Lie derivative.
The diffeomorphisms act on Dirac structures by push-forward, while a closed 2-form $\alpha\in \Omega^2_\mathrm{cl}(M)$ acts 
on a Dirac structure $(M,L)$ by the {\bf gauge transformation}:
\[ \a\cdot L:=\{(X,\b+i_X\a): (X,\b)\in L\}. \]
Clearly, $\a\cdot L$ is a Dirac structure with the same presympectic foliation as $L$, while the presymplectic form on each leaf of $\a\cdot L$ 
is the result of adding to the initial presymplectic form the pullback of $\a$ to the leaf.

Gauge transformations preserve integrability of Dirac structures. In fact, if $(\G,\Omega)$ is any presymplectic groupoid integrating 
the Dirac structure $(M,L)$ and $\alpha\in \Omega^2_\mathrm{cl}(M)$, then the same Lie groupoid 
with the modified presymplectic form
\[ \a\cdot\Omega:=\Omega+s^*\a-t^*\a \]
gives a presymplectic groupoid integrating $(M,\a\cdot L)$ (see \cite{BR}). Therefore:

\begin{proposition}
For any of the  compactness types (\ref{types}) the notion of $\mathcal{C}$-type and strong $\mathcal{C}$-type is invariant under gauge
equivalences of Dirac structures. 
\end{proposition}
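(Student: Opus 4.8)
The plan is to exploit the fact, recorded just above, that a gauge transformation does not alter the underlying Lie groupoid of an integration: it only replaces the multiplicative $2$-form $\Omega$ by $\a\cdot\Omega=\Omega+s^*\a-t^*\a$. Since each of the compactness conditions in (\ref{types})---compactness of the space of arrows, properness of $s$, or properness of the anchor $(s,t)$---is a property of the Lie groupoid $\G\tto M$ alone and makes no reference to any $2$-form, such a condition is insensitive to the passage from $\Omega$ to $\a\cdot\Omega$. This observation is essentially the whole content of the proposition; what remains is to phrase it correctly for both the weak and the strong notions and to note reversibility.

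Concretely, I would argue as follows. Suppose $(M,L)$ is of $\cC$-type and let $L'=\a\cdot L$ be gauge equivalent to it via a closed $2$-form $\a$. By definition there is an s-connected presymplectic integration $(\G,\Omega)$ of $(M,L)$ enjoying property $\cC$. By the cited result, $(\G,\a\cdot\Omega)$ is then a presymplectic integration of $(M,L')$ supported on the \emph{same} Lie groupoid $\G$. This $\G$ is still s-connected and still has property $\cC$, so $(M,L')$ is of $\cC$-type. Since the inverse gauge transformation by $-\a$ returns $L'$ to $L$, the argument is symmetric and $\cC$-type is a gauge invariant.

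For the strong case the point I would make explicit is that $L$ and $\a\cdot L$ are isomorphic \emph{as Lie algebroids} through the bundle map $(X,\b)\mapsto(X,\b+i_X\a)$, which covers the identity and intertwines the anchors (both given by projection to $TM$). By functoriality of the canonical source $1$-connected integration, this induces an isomorphism $\Sigma(M,\a\cdot L)\cong\Sigma(M,L)$ of Lie groupoids, under which the canonical form corresponds to $\a\cdot\Omega_\Sigma$. Hence smoothness of $\Sigma$ and its property $\cC$ are shared by $L$ and $\a\cdot L$, giving invariance of strong $\cC$-type as well.

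The argument requires no computation, so there is no genuine obstacle; the only point deserving care is the strong case, where one must check that the canonical integration of $\a\cdot L$ is obtained from that of $L$ by the same gauge recipe---that is, that integration is functorial under the Lie algebroid isomorphism above---rather than being merely some other integration that happens to satisfy property $\cC$.
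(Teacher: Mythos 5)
Your proposal is correct and follows essentially the same route as the paper: the paper simply invokes the fact (from Bursztyn--Radko) that the \emph{same} Lie groupoid $\G$ with the modified form $\a\cdot\Omega=\Omega+s^*\a-t^*\a$ integrates $\a\cdot L$, so the groupoid-theoretic compactness conditions are untouched. Your extra care in the strong case --- identifying $\Sigma(M,\a\cdot L)$ with $\Sigma(M,L)$ via the Lie algebroid isomorphism $(X,\b)\mapsto(X,\b+i_X\a)$ (equivalently, by uniqueness of the source $1$-connected integration) --- is exactly the point the paper leaves implicit, and is handled correctly.
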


\begin{example}
\label{ex:gauge:equivalent}
Consider two Hamiltonian (pre)symplectic spaces $(Q,\omega_1,G,\mu)$ and $(Q,\omega_2,G,\mu)$, with the same $G$-action and moment
map $\mu:Q\to\gg^*$, so only the (pre)symplectic forms are different. The difference $\omega_2-\omega_1$ is closed and is annihilated
by the infinitesimal generators of the action, so it is $G$-basic: $\omega_2-\omega_1=p^*\a$, 
where $p: Q\to M=Q/G$ and $\alpha\in \Omega^2_\mathrm{cl}(M)$. Denoting by $L_1$ and $L_2$ the reduced Dirac structures on $M$,
it then follows that they are gauge equivalent:
\[ L_2= \alpha \cdot L_1.\]
\end{example}
% This is also reflected (and even more obvious) at the level of the groupoid $\G= (Q\times_{\mu} Q)/G$, 
% where the corresponding presymplectic forms , denoted $\Omega'$ and $\Omega$, satisfy
% \[ \Omega'= \Omega+ s^*\alpha- t^*\alpha .\]
% In particular, one may say that the compactness type of the reduction of a Hamiltonian presymplectic space
% $(Q, \omega)$ only depends on the action and on the moment map. 

\subsection{The local linear model}
\label{sec:local:model}

We illustrate now many of the previous constructions by combining them to build a Poisson/Dirac structure which will 
turn out to be a local model for PMCTs and DMCTs (see \cite{CFMc}).

% The best illustration of the above situation is the case of {\bf coadjoint bundles over presymplectic manifolds}. These
% are Dirac structures (and germs of Poisson structures) which will play a vital role in the local analysis
% of PMCTs, and will be treated in full detail in \cite{CFMc}. Here we shall confine ourselves to a brief description.
% 
% 
% 
% \subsubsection{The  local linear model: the Dirac picture}
% \label{sec:ex:local-model}
% Next we describe the local linear models of Poisson structures around symplectic leaves $S$ which are of interest for us. 
% In some sense, this is just the well-known normal form of Hamiltonian spaces
% $(Q, \omega)$ around the fiber $\mu^{-1}(0)$ of the moment map. However, as we shall see below,
% the construction fits perfectly into the Poisson (or, even better, into the Dirac) picture (and makes sense more generally). 
The starting data for the local linear model consists of:
\begin{itemize}
\item a (pre)symplectic manifold $(S, \omega)$;
\item a principal $G$-bundle $p: P\to S$, where $P$ is connected and $G$ is a (possibly disconnected) compact Lie group;
\item a principal bundle connection $\theta\in \Omega^1(P, \gg)$.
\end{itemize}
We endow $P\times \gg^*$ with with the presymplectic form
\[
\omega^{\theta}_{\lin}= p^*\omega-\d\langle \theta,\cdot\rangle \in \Omega^2(P\times \gg^*).
\]
Using the coadjoint action, we have a diagonal action of $G$ on $(P\times \gg^*,\omega^{\theta}_{\lin})$. This action is free
and Hamiltonian with moment map $\mu=\mathrm{pr}_2:P\times \gg^*\to \gg^*$. The Dirac reduced space is 
the associated  bundle $P\times_G \gg^*$ and it carries  a reduced Dirac structure $L_{\lin}^\theta$. We call it the {\bf local linear model}.
Note that $S=P\times_G\{0\}\subset P\times_G \gg^*$ is a (pre)symplectic leaf of this reduced Dirac structure, with (pre)symplectic form $\omega$.

Our discussion in Section \ref{sub-sub:Hamiltonian}, gives:

\begin{corollary}
\label{cor:local:model}
If $P$ is a connected principal $G$-bundle over the presymplectic manifold $(S, \omega)$ and $G$ is a compact Lie group,
then the local linear model Dirac structure $L_{\lin}^\theta$ on $(P\times_G \gg^*,L_{\lin}^\theta)$ is:
\begin{enumerate}[(i)]
\item always of proper type;
\item of strong proper type if $P$ is 1-connected;
\item of s-proper type if $P$ is compact;
\item of strong s-proper type if $P$ is compact and 1-connected;
\item never of compact type.
\end{enumerate}
\end{corollary}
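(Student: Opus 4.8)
The plan is to recognize the local linear model as the Dirac reduction of a Hamiltonian (pre)symplectic space and then to quote Proposition \ref{Ham-quot-proper-type} essentially verbatim. By construction $(P\times\gg^*,\omega^{\theta}_{\lin})$ is a (pre)symplectic manifold on which the compact group $G$ acts freely and in a Hamiltonian fashion, with moment map $\mu=\mathrm{pr}_2\colon P\times\gg^*\to\gg^*$, and its Dirac reduced space is exactly $(P\times_G\gg^*,L^{\theta}_{\lin})$. Since $P$ is connected and $\gg^*$ is a vector space, the total space $Q:=P\times\gg^*$ is connected, so all standing hypotheses of the Dirac version of Proposition \ref{Ham-quot-proper-type} (as set up in Section \ref{sub-sub:Hamiltonian}) are met. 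It then only remains to translate the connectedness and properness conditions on $\mu$ appearing in that proposition into the stated conditions on $P$.

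This translation rests on two elementary observations, which I would record first. The fibers of the moment map are $\mu^{-1}(\xi)=P\times\{\xi\}\cong P$; hence they are connected (automatically, by the hypothesis on $P$) and they are $1$-connected precisely when $P$ is $1$-connected. Moreover, since the preimage of a compact set $K\subset\gg^*$ under $\mathrm{pr}_2$ is $P\times K$, the moment map is proper if and only if $P$ is compact. Feeding these into Proposition \ref{Ham-quot-proper-type} yields (i)--(iv) directly: part (i) of that proposition gives proper type always, and strong proper type exactly when the moment fibers are $1$-connected, i.e.\ when $P$ is $1$-connected, proving (i) and (ii); part (ii) upgrades these conclusions to s-proper, respectively strong s-proper, exactly when $\mu$ is proper, i.e.\ when $P$ is compact, proving (iii) and (iv).

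For (v) I would invoke part (iii) of Proposition \ref{Ham-quot-proper-type}, by which a reduced space of this kind is of compact type only if $G$ is finite. The cleanest self-contained argument, however, is that the base of any compact symplectic (or presymplectic) groupoid is compact, whereas here the base $M=P\times_G\gg^*$ is a vector bundle over $S=P/G$ with fiber $\gg^*$, hence non-compact as soon as $\dim G\ge 1$; thus the model is never of compact type. I expect the main obstacle to be purely bookkeeping rather than conceptual: one must be careful that the constructions of Section \ref{sub-sub:Hamiltonian} are applied in the presymplectic, and if needed $\phi$-twisted Dirac, generality (so that ``symplectic'' is consistently read as ``presymplectic'' and the reduced object is the pushforward Dirac structure $L^{\theta}_{\lin}$), and one should note the degenerate case $\gg=0$, in which $G$ is finite and the model collapses to $(S,\omega)$ itself, which by Section \ref{sec:ex:symplectic} is of compact type exactly when $S$ is compact. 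Everything else reduces to a direct appeal to the already-established Proposition.
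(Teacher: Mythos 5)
Your proposal is correct and follows the paper's own route exactly: the paper derives Corollary \ref{cor:local:model} by direct appeal to the discussion of Section \ref{sub-sub:Hamiltonian} (the presymplectic/Dirac version of Proposition \ref{Ham-quot-proper-type}), with the same identifications you make --- the moment fibers are copies of $P$, so connectedness and $1$-connectedness of the fibers translate into the hypotheses on $P$, and properness of $\mu=\mathrm{pr}_2$ is equivalent to compactness of $P$. Your extra remarks on (v) (non-compactness of the base $P\times_G\gg^*$ when $\dim G\ge 1$, and the degenerate case $\gg=0$) are a sensible clarification of a point the paper leaves implicit.
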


In our study of local normal forms of PMCTs and DMCTs \cite{CFMc} we will see that the necessary conditions stated above for the various
compactness types are actually sufficient.

\begin{remark}
Choosing a different connection 1-form $\theta'$ results in a different Dirac structure on the reduced space.
However, Example \ref{ex:gauge:equivalent} shows that $L_{\lin}^\theta$ and  $L_{\lin}^{\theta'}$ are gauge equivalent, 
so the connection can be thought of as auxiliary data.
\end{remark} 

When $(S,\w)$ is a symplectic manifold the presymplectic form $\omega^{\theta}_{\lin}$ is actually symplectic in
a $G$-invariant neighborhood of $P\times\{0\}$ in $P\times\gg^*$, and the Poisson support of $L_{\lin}^\theta$ includes 
a neighborhood of $S\subset P\times_G \gg^*$. We call the resulting Poisson bivector the {\bf local linear model} and denote it by $\pi^\theta_\lin$.

\begin{remark}
When $(S,\w)$ is symplectic, $P$ is compact and we choose two different connections, a standard Moser argument applied in an 
appropriate $G$-invariant neighborhood $P\times\{0\}\subset U\subset P\times\gg^*$ shows that we can find a $G$-invariant 
symplectomorphism between $\omega^{\theta}_{\lin}|_U$ and $\omega^{\theta'}_{\lin}|_U$, which is the identity on $P$.
It follows that the resulting local linear models $\pi^\theta_\lin$ and $\pi^{\theta'}_\lin$ are not only gauge equivalent 
but also Poisson diffeomorphic in a neighborhood of $S$, via a diffeomorphism which is the identity in $S$.
\end{remark} 

\section{Poisson homotopy groups and strong compactness}\label{Poisson-homotopy-groups}
%%%%%%%%%%%%%%%%%%%%%%%%
%%%%%%%%%%%%%%%%%%%%%%%%
%%%%%%%%%%%%%%%%%%%%%%%%
%%%%%%%%%%%%%%%%%%%%%%%%
%%%%%%%%%%%%%%%%%%%%%%%%
%%%%%%%%%%%%%%%%%%%%%%%%
%%%%%%%%%%%%%%%%%%%%%%%%
%%%%%%%%%%%%%%%%%%%%%%%%

The ``strong'' versions of the various compactness conditions are somewhat more intrinsic
since they depend only on the canonical integration $\Sigma(M, \pi)$. For instance, 
while properness implies that the isotropy Lie algebra $\gg_x(M, \pi)$ comes from {\it some} 
compact Lie group covered by the Poisson homotopy group $\Sigma_{x}(M, \pi)$,
strong properness implies that  $\Sigma_{x}(M, \pi)$ is itself compact. 
In fact, this is the \emph{only} difference between $\mathcal{C}$-type and strong $\mathcal{C}$-type,
since we have:

\begin{proposition} A Poisson manifold $(M, \pi)$ is of strong s-proper type (respectively, strong compact type) 
iff it is of s-proper type (respectively, compact type) and all the groups $\Sigma_{x}(M, \pi)$ are compact. 
\end{proposition}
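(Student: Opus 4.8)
The plan is to prove both equivalences simultaneously, reducing the compact case to the s-proper case, and to separate the (easy) forward implications from the (substantial) converse. For the forward direction, suppose $(M,\pi)$ is of strong s-proper (resp.\ strong compact) type. By Definition \ref{def:pmct} its canonical integration $\Sigma(M,\pi)$ is then a smooth symplectic groupoid which is s-proper (resp.\ compact); being source $1$-connected it is in particular s-connected, so it already witnesses that $(M,\pi)$ is of s-proper (resp.\ compact) type. Moreover each isotropy group $\Sigma_x=s^{-1}(x)\cap t^{-1}(x)$ is a closed subset of the compact fibre $s^{-1}(x)$ (resp.\ of the compact manifold of arrows), hence compact. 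This settles one direction of both statements.

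For the converse I would first dispose of smoothness: if $(M,\pi)$ is of s-proper (or compact) type then $T^*M$ is integrable, so the topological groupoid $\Sigma(M,\pi)$ is smooth. The heart of the argument is then the s-proper case. Assume $(M,\pi)$ is of s-proper type, witnessed by an s-connected integration $(\cG,\Omega)$ whose source is proper, and assume that every $\Sigma_x$ is compact. Since the orbit $S$ through $x$ is the continuous image $t(s_\cG^{-1}(x))$ of a compact s-fibre, each symplectic leaf $S$ is compact. Passing to the canonical integration, the target map restricts to a principal $\Sigma_x$-bundle
\[ t\colon s_\Sigma^{-1}(x)\longrightarrow S \]
over the compact base $S$ with compact structure group $\Sigma_x$. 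The total space of a locally trivial principal bundle with compact fibre over a compact base is compact, so every s-fibre of $\Sigma(M,\pi)$ is compact; that is, $\Sigma(M,\pi)$ is s-proper and $(M,\pi)$ is of strong s-proper type.

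The compact case follows formally. If $(M,\pi)$ is of compact type, fix a compact s-connected integration $\cG$. Then $M=s(\cG)$ is compact, and the s-fibres of $\cG$, being closed in the compact space $\cG$, are compact, so $(M,\pi)$ is in particular of s-proper type; by the previous paragraph $\Sigma(M,\pi)$ is s-proper, i.e.\ $s\colon\Sigma(M,\pi)\to M$ is proper. As $M$ is compact, $\Sigma(M,\pi)=s^{-1}(M)$ is then compact, so $(M,\pi)$ is of strong compact type.

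The main obstacle, and the only genuinely delicate point, is to guarantee that $\Sigma(M,\pi)$ is an honest \emph{Hausdorff} Lie groupoid, since properness here incorporates Hausdorffness (see Remark \ref{rem:Hausdorff}). I expect to handle this via the source-universal covering morphism $p\colon\Sigma(M,\pi)\to\cG$, a surjective local diffeomorphism which restricts on each s-fibre to the universal covering of the compact fibre $s_\cG^{-1}(x)$. Once the s-fibres of $\Sigma(M,\pi)$ are known to be compact these coverings are finite, and since $s_\cG$ is a proper submersion, hence a locally trivial fibration by Ehresmann, the map $p$ becomes a covering map over the Hausdorff groupoid $\cG$; as a covering space of a Hausdorff space is Hausdorff, this yields the required Hausdorffness of $\Sigma(M,\pi)$ and completes the proof.
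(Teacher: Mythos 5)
Your proposal is correct and, for the one substantive implication, ultimately rests on the same mechanism as the paper's proof: Ehresmann's theorem makes $s_{\cG}$ locally trivial, so $\Sigma(M,\pi)\to\cG$ is a covering whose deck group is a discrete (hence finite) subgroup of the compact $\Sigma_x$, which gives properness and Hausdorffness of $\Sigma(M,\pi)$ at once. Your extra detour through the principal $\Sigma_x$-bundle $t\colon s_\Sigma^{-1}(x)\to S$ over the compact leaf is a valid alternative way to see that the $s$-fibres of $\Sigma(M,\pi)$ are compact, and your explicit treatment of the forward direction and of the reduction of the compact case to the $s$-proper case fills in steps the paper leaves implicit.
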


\begin{proof} If $\G$ is an s-proper integration with connected $s$-fibers,
Ehresmann's theorem implies that $s: \G\to M$ is locally trivial. It follows that the  groupoid morphism
$\Sigma(M,\pi)\to\G$ is a covering map with fibers isomorphic to a discrete subgroup of  $\Sigma_{x}(M, \pi)$, and hence finite.
We conclude that $\Sigma(M,\pi)$ is s-proper. 
\end{proof} 

The previous proposition shows that it is important to obtain characterizations of the compactness of $\Sigma_x(M, \pi)$ 
in terms of the Poisson geometry. This is actually possible as shown by the following theorem where we do not assume 
integrability of the Poisson manifold:

\begin{theorem}\label{prop-compactness-Poisson-homotopy} If $(M, \pi)$ is a Poisson manifold and $x\in M$, 
then $\Sigma_x(M, \pi)$ is a compact Lie group iff all the following properties hold:
\begin{enumerate}[(i)]
\item the fundamental group of the leaf $S$ through $x$ is finite;
\item the isotropy Lie algebra $\gg_x(M, \pi)$ is a Lie algebra of compact type;
\item the monodromy group $\cN_x$ is a lattice in $Z(\gg_x)$.
\end{enumerate}
\end{theorem}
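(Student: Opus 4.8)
The plan is to reduce everything to the structure of the topological group $\Sigma_x:=\Sigma_x(M,\pi)$, which, by the $A$-path picture of \cite{CF1,CF2}, depends only on the restriction of the cotangent algebroid to the leaf $S$ through $x$, i.e. on the transitive Lie algebroid $A_S$ of \eqref{eq:liealgebrasbundlealgebroid} with isotropy $\gg_x$. First I would recall the two basic structural facts. Writing $s^{-1}(x)$ for the (always $1$-connected) space of $A$-homotopy classes of $A$-paths issuing from $x$, the base-point map $t\colon s^{-1}(x)\to S$ is a principal $\Sigma_x$-bundle in the topological sense, and its homotopy long exact sequence yields
\[ \pi_0(\Sigma_x)\cong \pi_1(S,x), \qquad \partial\colon \pi_2(S,x)\to \pi_1(\Sigma_x^0). \]
The first isomorphism identifies the component group of $\Sigma_x$ with $\pi_1(S,x)$; the connecting map $\partial$, after composing with the identification of $\pi_1$ of the identity component with the kernel of $G(\gg_x)\to\Sigma_x^0$, recovers the monodromy map \eqref{monodromy-map-eq}, so that its image is the monodromy group $\cN_x$.

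Next I would make the identity component explicit. Let $G:=G(\gg_x)$ be the $1$-connected Lie group integrating $\gg_x$ (Lie's third theorem). The universal property of $G$ gives a surjective homomorphism $G\to \Sigma_x^0$ with central kernel $N$; by \cite{CF2} this is a covering, so $\Sigma_x^0$ is a genuine Lie group, precisely when $\cN_x$ is discrete, and under $\exp\colon Z(\gg_x)\to G$ one has $\cN_x = \exp^{-1}(N)\cap Z(\gg_x)$ in the sense of Definition \ref{def-mon-gps-int}. Thus $\Sigma_x^0\cong G/N$ with $N\subset Z(G)$, and $\Sigma_x$ is the union of the translates of $\Sigma_x^0$ indexed by $\pi_0(\Sigma_x)\cong\pi_1(S,x)$.

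With this in hand the equivalence is a structure-theoretic computation. For the forward direction, compactness of $\Sigma_x$ forces finitely many components, hence $\pi_1(S,x)$ finite, giving (i); the compact connected Lie group $\Sigma_x^0$ has Lie algebra $\gg_x$ of compact type, giving (ii). By (ii) we may split $\gg_x=\kk\oplus Z(\gg_x)$ and write $G=\widetilde K\times Z(\gg_x)$ with $\widetilde K$ compact and $\exp|_{Z(\gg_x)}$ the identity of the vector group $Z(\gg_x)\cong\R^k$; then $\cN_x = N\cap Z(\gg_x)$, and since $N/(N\cap Z(\gg_x))$ embeds into the finite group $Z(\widetilde K)$, the subgroup $N\cap Z(\gg_x)$ has the same rank as $N$. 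Compactness of $G/N=(\widetilde K\times\R^k)/N$ is therefore equivalent to $\cN_x$ being a full lattice, giving (iii). The converse reads the same chain backwards: (iii) makes $N$ discrete (an extension of the finite $Z(\widetilde K)$-part by the lattice $\cN_x$), so $\Sigma_x^0=G/N$ is a compact Lie group, while (i) makes the component group finite; hence $\Sigma_x$ is a compact Lie group.

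The main obstacle is the second step: pinning down the description of $\Sigma_x^0$ as $G(\gg_x)/N$ together with the identification $\cN_x=N\cap Z(\gg_x)$ \emph{without} presupposing integrability of $(M,\pi)$, i.e. matching the homotopy-theoretic monodromy $\cN_x=\im\partial$ with the kernel of the exponential of the merely topological group $\Sigma_x$. This is exactly what the monodromy machinery of \cite{CF1,CF2} supplies; once it is invoked, the remaining lattice bookkeeping (that the semisimple part contributes only the finite center $Z(\widetilde K)$, so that $N\cap Z(\gg_x)$ and the projection of $N$ to $Z(\gg_x)$ have equal rank) is elementary.
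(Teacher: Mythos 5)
Your proposal is correct and follows essentially the same route as the paper: both rest on the homotopy exact sequence of the principal bundle $t\colon s^{-1}(x)\to S$ to get $\pi_0(\Sigma_x)\cong\pi_1(S)$, on the identification $\Sigma_x^0\cong G(\gg_x)/\widetilde{\cN}_x$ with $\cN_x$ the part of $\widetilde{\cN}_x$ seen through $\exp|_{Z(\gg_x)}$, and then reduce compactness of $\Sigma_x^0$ to compactness of $Z(\gg_x)/\cN_x$ modulo finite and compact pieces. The only (cosmetic) divergence is the final bookkeeping: you split $G(\gg_x)=\widetilde K\times Z(\gg_x)$ explicitly and compare ranks via the finite group $Z(\widetilde K)$, whereas the paper runs the equivalent computation through the two short exact sequences $1\to Z(G)/\widetilde{\cN}_x\to G/\widetilde{\cN}_x\to G/Z(G)\to 1$ and $1\to Z(G)^0/(Z(G)^0\cap\widetilde{\cN}_x)\to Z(G)/\widetilde{\cN}_x\to Z(G)/(Z(G)^0\cdot\widetilde{\cN}_x)\to 1$.
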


\begin{remark}
At regular points, the isotropy Lie algebra $\gg_x$ is abelian and coincides with the conormal space $\nu_x^*(S)$.
We will see in \cite{CFMb,CFMc} that for a strong s-proper Poisson manifold the lattices $\cN_x\subset\nu_x(S)$ form a Lagrangian lattice, 
so the regular part of the symplectic foliation admits a transverse integral affine structure (see Section \ref{sec:integral:affine:foliation}); 
this will lead to yet another characterization of regular Poisson manifolds strong s-proper type in terms of properties of the symplectic foliation and 
of the monodromy groups.
\end{remark}

For the proof of the theorem we need to recall additional aspects of the monodromy groups $\cN_x$, which so far we have only discussed
in the integrable case.  

The group $\cN_x$ arises naturally when trying to compute $\Sigma_x$ directly out of $(M, \pi)$. Let us assume for a moment 
that $(M,\pi)$ is integrable, and let us look at the first terms of the homotopy long exact sequence of 
the fibration $t: s^{-1}(x)\to S$. Since the source fibers are 1-connected, we obtain:
\begin{equation}\label{hom-ex-seq-part}
\xymatrix{\ar[r]& \pi_2(S) \ar[r]^\partial & \pi_1(\Sigma_{x})\ar[r]& 0\ar[r] & \pi_1(S)\ar[r]& \pi_0(\Sigma_x) \ar[r]& 0}. 
\end{equation}
First of all, we conclude that $\pi_0(\Sigma_x)$ is isomorphic to $\pi_1(S_x)$. Next, we observe that $\Sigma_{x}^{0}$ (the connected component of the identity) integrates $\gg_x$. Hence, it is a quotient of the 1-connected Lie group $G(\gg_x)$ integrating $\gg_x$ by
some discrete subgroup $\widetilde{\cN}_x$ of the center:
\[ \Sigma_{x}^0= G(\gg_x)/\widetilde{\cN}_x .\]
The group $\widetilde{\cN}_x$ is called the {\bf extended monodromy group} of $(M, \pi)$ at $x$.  Identifying $G(\gg_x)$ with 
the universal covering space of $\Sigma_{x}^{0}$, the discrete group $\widetilde{\cN}_x$ is just $\pi_1(\Sigma_{x}^{0})$, 
realized as the kernel of the covering projection. One can also obtain $\widetilde{\cN}_x$
as the image of the so-called {\bf monodromy map}:
\begin{equation}\label{mon-map} 
\partial_x: \pi_2(S_x)\to G(\gg_x), 
\end{equation}
which is the composition of  the connecting homomorphism $\partial$ from (\ref{hom-ex-seq-part})
with the canonical inclusion $\pi_1(\Sigma_x)\subset G(\gg_x)$. 

The {\bf monodromy group} $\mathcal{N}_x$ arises as a simplification of $\widetilde{\cN}_x$ obtained by passing from 
$G(\gg_x)$ to $\gg_x$ via the exponential map. More precisely, the exponential map $\widetilde{\textrm{exp}}: \gg_x\to G(\gg_x)$ gives an isomorphism
\[ 
\widetilde{\textrm{exp}}|_{Z(\gg_x)}: Z(\gg_x){\to} Z(G(\gg_x))^0,
\]
and one defines:
\begin{equation}\label{mon-gp-gen} 
\cN_x:=\{u\in Z(\gg_x): \widetilde{\textrm{exp}}(u)\in \widetilde{\cN_x}\}.
\end{equation}

If the integrability assumption is dropped, then the monodromy map
(\ref{mon-map}) can still be defined directly in terms of the cotangent Lie algebroid of $(M, \pi)$ \cite{CF1,CF2}.
%  
% The point of the previous discussion is that the monodromy groups is that 
% can be described more directly in terms of the Poisson manifold and its Lie algebroids, and without
% making use of any integrability condition.
%  First of all, as we have already mentioned,
% the monodromy map (\ref{mon-map}) can be defined directly in terms of the Lie algebroid of $(M, \pi)$
% and, more importantly,  makes sense without any integrability condition. 
The extended monodromy group $\widetilde{\cN}_x$ is still defined as the image of $\partial_x$, while 
the monodromy group is given by (\ref{mon-gp-gen}). Moreover, the relevance of these two groups to integrability should be 
clear: since $\Sigma_{x}^{0}\cong G(\gg_x)/\widetilde{\cN}_x$, for this quotient to be a manifold 
$\widetilde{\cN}_x$ must be a discrete subgroup of $G(\gg_x)$, and this holds iff $\cN_x$ is a discrete subgroup of $Z(\gg_x)$.

\begin{proof}[Proof of Theorem \ref{prop-compactness-Poisson-homotopy}]
Let us set $G= G(\gg_x)$. If $\Sigma_x$ is a compact Lie group, then $\gg_x$ is compact. Moreover,
$s^{-1}(x)$ is a smooth principal $\Sigma_x$-bundle, so by the long exact homotopy sequence (\ref{hom-ex-seq-part}), we see that
$\pi_1(S)$ is finite. The discreteness of $\cN_x$ is clear since $\Sigma_x=G/\widetilde{\cN}_x$. Also, since this group is compact,
so is the closed subgroup $Z(G)/\widetilde{\cN}_x$, and then also 
\[ Z(\gg_x)/\cN_x\cong Z(G)^0/Z(G)^0\cap \widetilde{\cN}_x \subset Z(G)/\widetilde{\cN}_x .\]
Therefore $\cN_x$ must  be a lattice in $Z(\gg_x)$.

Conversely, the assumptions imply that $t: s^{-1}(x)\to S$ is a smooth principal $\Sigma_x$-bundle and that 
$\pi_0(\Sigma_x)\cong \pi_1(S)$ is finite. Hence we are left with proving that $\Sigma_{x}^{0}= G/\widetilde{\cN}_x$ is a compact Lie group.
By the previous discussion on integrability, $\widetilde{\cN}_x$ is discrete in $G$ and we have a short exact sequence of Lie groups
\[ 
\xymatrix{1\ar[r] & Z(G)/\widetilde{\cN}_x \ar[r] & G/\widetilde{\cN}_x \ar[r] & G/Z(G)\ar[r] & 1},
\]
where the last group is compact because the Lie algebra $\gg_x$ of $G$ is of 
compact type. Hence, to prove that the middle group is compact, we are left with proving that the first one is.
For this we use a similar short exact sequence of Lie groups
\[ 
\xymatrix{1\ar[r] & Z(G)^0/\widetilde{\cN}_x \ar[r] & Z(G)/\widetilde{\cN}_x \ar[r] & Z(G)/(Z(G)^0\cdot \widetilde{\cN}_x)\ar[r] & 1}.
\]
Note that $Z(G)/Z(G)^0$ is a finite group because $\gg_x$ is of compact type. Thus the last group in the sequence is finite. The first
group in the sequence is also compact being isomorphic to $Z(\gg_x)/\cN_x$, with $\cN_x$ a lattice. This proves the theorem.
\end{proof}

%%%%%%%%%%%%%%%%%%%%%%%%
%%%%%%%%%%%%%%%%%%%%%%%%
%%%%%%%%%%%%%%%%%%%%%%%%
%%%%%%%%%%%%%%%%%%%%%%%%
%%%%%%%%%%%%%%%%%%%%%%%%
%%%%%%%%%%%%%%%%%%%%%%%%
%%%%%%%%%%%%%%%%%%%%%%%%
%%%%%%%%%%%%%%%%%%%%%%%%
\section{Basic properties of PMCTs}
\label{sec:basic-properties}
%%%%%%%%%%%%%%%%%%%%%%%%
%%%%%%%%%%%%%%%%%%%%%%%%
%%%%%%%%%%%%%%%%%%%%%%%%
%%%%%%%%%%%%%%%%%%%%%%%%
%%%%%%%%%%%%%%%%%%%%%%%%
%%%%%%%%%%%%%%%%%%%%%%%%
%%%%%%%%%%%%%%%%%%%%%%%%
%%%%%%%%%%%%%%%%%%%%%%%%
We now turn to some basic properties of PMCTs. Many of these properties are particular cases of general properties of Lie groupoids
that are proper, s-proper or compact.
The Poisson geometric context allows one to translate them into more geometric properties.
Some of these will be mentioned below. However, many of them will only be fully addressed in 
\cite{CFMb,CFMc}, for they require some new techniques and ideas.

For now, let us start by listing the most basic properties of proper Lie groupoids:

% \begin{lemma} \label{lemma-gen-pr-gpds} If $\cG\tto M$ is a proper groupoid, then all the orbits of $\cG$ are closed embedded submanifolds, all the isotropy groups $\cG_x$ are compact and the space of orbits (with the quotient topology) is Hausdorff. In the $s$-proper case, the orbits are compact. In the s-simply connected case, the orbits have finite fundmental group. 
% \end{lemma} 

\begin{lemma} \label{lemma-gen-pr-gpds} If $\cG\tto M$ is a proper Lie groupoid, then:
\begin{enumerate}[(i)]
\item the orbits of $\cG$ are closed embedded submanifolds;  if $\G$ is source 1-connected they have finite fundamental group; 
if $\G$ is $s$-proper they are compact;
\item the orbit space (with the quotient topology) is Hausdorff;
\item all the isotropy groups $\cG_x$ are compact;
\item the linear holonomy groups  (see (\ref{lin-hol-gpd-case})) are finite. 
\end{enumerate}
\end{lemma}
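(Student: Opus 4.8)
The plan is to deduce all four items from the single hypothesis that the anchor $(s,t)\colon\cG\to M\times M$ is proper, handling them in an order in which each feeds the next. I would start with (iii): the isotropy group $\cG_x=(s,t)^{-1}(x,x)$ is the preimage of a point under a proper map, hence compact; being also an embedded Lie subgroup of $s^{-1}(x)$, it is a compact Lie group, a fact I reuse below. For (ii), I would observe that the orbit equivalence relation is exactly the image $R:=(s,t)(\cG)\subseteq M\times M$, which is closed because a proper map into a manifold is closed. Since the orbit projection $q\colon M\to M/\cG$ is an open map (the saturation of an open $U$ is $t(s^{-1}(U))$, open because $t$ is a submersion), and an open quotient map has Hausdorff quotient exactly when $R$ is closed, item (ii) follows.

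For (i), the key point is that $t|_{s^{-1}(x)}\colon s^{-1}(x)\to M$ is proper: for compact $K\subseteq M$ one has $(t|_{s^{-1}(x)})^{-1}(K)=(s,t)^{-1}(\{x\}\times K)$, which is compact. This map is $\cG_x$-invariant with fibres the $\cG_x$-orbits, so it descends to an injective immersion $\bar t\colon \O_x\cong s^{-1}(x)/\cG_x\to M$ that is again proper (the preimage of a compact set is a compact set quotiented by the compact group $\cG_x$). As a proper injective immersion is a closed embedding, $\O_x$ is a closed embedded submanifold. The addenda follow from the principal $\cG_x$-bundle $t\colon s^{-1}(x)\to\O_x$: its homotopy long exact sequence gives $\pi_1(\O_x)\cong\pi_0(\cG_x)$ when $s^{-1}(x)$ is $1$-connected, finite since $\cG_x$ is a compact Lie group; and if $s$ is proper then $s^{-1}(x)$ is compact, hence so is its image $\O_x$.

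The real content is (iv), where the plan is to realize the linear holonomy as an action of the (compact) isotropy group. Writing $S=\O_x$ and $P=s^{-1}(x)$, the restriction $\cG_S$ is a transitive proper groupoid, namely the gauge groupoid of the principal $\cG_x$-bundle $P\to S$; correspondingly $\gg_S=P\times_{\cG_x}\gg_x$ and $Z(\gg_S)=P\times_{\cG_x}Z(\gg_x)$, with $\cG_x$ acting by the adjoint representation. Lifting a loop $\gamma$ based at $x$ to a path in $P$ from the unit $1_x$, its endpoint is some $g\in\cG_x$, and I would check that parallel transport for the flat connection $\nabla_X\alpha=[\sigma(X),\alpha]$ on $Z(\gg_S)$ equals $\Ad_g|_{Z(\gg_x)}$. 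Thus the image of $\rho$ sits inside the image of the adjoint action $\cG_x\to\GL(Z(\gg_x))$.

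The finiteness then rests on a single algebraic fact: the adjoint action is trivial on the identity component $\cG_x^0$, since for $Y\in\gg_x$ and $u\in Z(\gg_x)$ one has $\Ad_{\exp Y}u=\exp(\ad_Y)u=u$ because $[\gg_x,Z(\gg_x)]=0$. Hence this action factors through the finite group $\pi_0(\cG_x)$, and the linear holonomy group, being a subgroup of a finite group, is finite. I expect the main obstacle to be making the identification of flat parallel transport with $\Ad_g|_{Z(\gg_x)}$ precise, in particular checking that it depends on $[\gamma]$ only through the induced element of $\pi_0(\cG_x)$; once this is established the conclusion is immediate from compactness of $\cG_x$ via item (iii).
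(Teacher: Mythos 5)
Your proposal is correct. Note, however, that the paper itself offers no proof of this lemma: it is stated as a list of standard properties of proper Lie groupoids, so there is no argument of the authors' to compare against, and your write-up in effect supplies the missing details. Items (i)--(iii) are handled exactly as one would expect (compactness of fibres of the proper anchor, closedness of the graph $R=(s,t)(\cG)$ combined with openness of the quotient map, and properness of $t|_{s^{-1}(x)}$ descending to a proper injective immersion $\overline{t}$). For (iv), the one point that genuinely needs care is the one you flag: the endpoint $g\in\cG_x$ of a lift of $\gamma$ depends on the lift, and the identity ``parallel transport $=\Ad_g|_{Z(\gg_x)}$'' holds on the nose only for the $\sigma$-horizontal lift, since $\sigma$ is precisely a principal connection on $P\to S$ and $\nabla_X\alpha=[\sigma(X),\alpha]$ is the induced connection on the adjoint bundle $P\times_{\cG_x}\gg_x$. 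Two lifts starting at $1_x$ differ by a path in $\cG_x$ issuing from the identity, so their endpoints differ by an element of $\cG_x^0$, and your observation that $\Ad$ is trivial on $Z(\gg_x)$ along $\cG_x^0$ (because $[\gg_x,Z(\gg_x)]=0$) shows both that the class in $\pi_0(\cG_x)$ is all that matters and that the holonomy factors through the finite group $\pi_0(\cG_x)$. With that caveat made explicit, the argument is complete.
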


In the case of Poisson/Dirac manifolds of proper type, we see already that there are restrictions on the topology of the symplectic foliation 
and on the isotropy Lie algebras. We shall see in \cite{CFMb,CFMc} that in the Poisson/Dirac case a lot more can be said about the 
space of symplectic leaves, namely the existence of an integral orbifold structure, which does not hold for the orbit spaces of general proper groupoids.

An immediate consequence of the previous lemma is the following: 

\begin{corollary}  If $(M, \pi)$ is a Poisson manifold of proper type, then all the isotropy
Lie algebras $\gg_x(M, \pi)$ are Lie algebras of compact type.
\end{corollary}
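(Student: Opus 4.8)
The plan is to read the statement off directly from part (iii) of Lemma \ref{lemma-gen-pr-gpds}, which is why the paper calls it an immediate consequence. By Definition \ref{def:pmct}, since $(M,\pi)$ is of proper type there exists an s-connected symplectic integration $(\cG,\Omega)\tto M$ which is proper as a Lie groupoid. I will work with this $\cG$ throughout; the particular choice of s-connected integration will not matter.

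First I would pin down the relevant Lie group and its Lie algebra. For a point $x\in M$, the isotropy group $\cG_x=s^{-1}(x)\cap t^{-1}(x)$ is a Lie group, and its Lie algebra is exactly the isotropy Lie algebra $\gg_x(M,\pi)$ of the cotangent Lie algebroid $T^*M$ at $x$. Concretely, $T_{1_x}(s^{-1}(x))$ is the algebroid fiber, the differential of $t$ restricted to the $s$-fiber is the anchor $\pi^\sharp$, so $\textrm{Lie}(\cG_x)=\Ker(\pi^\sharp_x)=\gg_x(M,\pi)$; as recalled in Section \ref{ssec:liealg}, this identification is insensitive to whether we use $\cG$, its s-connected subgroupoid, or its source-universal cover, since none of these change the underlying Lie algebroid.

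Next I would invoke Lemma \ref{lemma-gen-pr-gpds}(iii): properness of $\cG$ forces every isotropy group $\cG_x$ to be a compact Lie group. A Lie algebra that admits a compact Lie group integrating it is, by definition, a Lie algebra of compact type (this is the usual notion, as noted in Section \ref{sec:compactness}). Hence $\gg_x(M,\pi)=\textrm{Lie}(\cG_x)$ is of compact type, which is the assertion of the corollary.

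There is no genuine obstacle here; the argument is a one-line transcription of the compactness of the isotropy groups from the groupoid to its infinitesimal counterpart. The only point requiring a moment's care is the identification $\textrm{Lie}(\cG_x)=\gg_x(M,\pi)$, which is precisely where the hypothesis that $\cG$ integrates the \emph{cotangent} Lie algebroid of $(M,\pi)$ is used; everything else follows formally from Lemma \ref{lemma-gen-pr-gpds}.
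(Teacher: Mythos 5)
Your proof is correct and follows exactly the route the paper intends: the paper presents this corollary as an immediate consequence of Lemma \ref{lemma-gen-pr-gpds}(iii), namely that the isotropy groups $\cG_x$ of a proper s-connected symplectic integration are compact Lie groups whose Lie algebras are the $\gg_x(M,\pi)$. Your additional care in identifying $\textrm{Lie}(\cG_x)$ with $\gg_x(M,\pi)$ is exactly the (routine) point the paper leaves implicit.
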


\subsection{Basic Poisson-topological properties.}

For the next propositions it is useful to recall that for any Lie groupoid $\G$ with Lie algebroid $A$,
one has the Van Est map relating differentiable groupoid cohomology and Lie algebroid cohomology:
\[ \mathrm{VE}:H^k(\G)\to H^k(A). \]
The results of \cite{Cr1} show that:
\begin{enumerate}[(a)]
\item if the s-fibers are homologically $k$-connected, then the van Est map is an isomorphism up to degree $k$ and injective in degree $k+1$;
\item the differentiable cohomology of any proper Lie groupoid $\G$ vanishes above degree 0.
\end{enumerate} 
For the cotangent Lie algebroid of a Poisson manifold $(M,\pi)$ the Lie algebroid cohomology is the same 
as Poisson cohomology: $H^\bullet(T^*M)=H^\bullet_\pi(M)$ . Hence, we obtain:

\begin{proposition} 
\label{prop:first:cohomology}
For any Poisson manifold of strong proper type $H^{1}_{\pi}(M)=0$. 
\end{proposition}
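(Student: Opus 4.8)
The plan is to combine the Van Est isomorphism with the vanishing of differentiable cohomology for proper groupoids, both of which are recorded in the paragraph immediately preceding the statement. First I would unwind the hypothesis: by Definition \ref{def:pmct}, a Poisson manifold of strong proper type is one whose canonical integration $\Sigma(M,\pi)$ is smooth and proper. Being the canonical (Weinstein) integration, $\Sigma(M,\pi)$ is source $1$-connected, so its $s$-fibers are $1$-connected and, in particular, homologically $1$-connected.

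Next I would invoke the two facts from \cite{Cr1} quoted above. Since the $s$-fibers are homologically $1$-connected, fact (a) with $k=1$ gives that the Van Est map
\[ \mathrm{VE}: H^1(\Sigma(M,\pi)) \longrightarrow H^1(T^*M) = H^1_\pi(M) \]
is an isomorphism (it is an isomorphism up to degree $1$ and injective in degree $2$); here I use that the Lie algebroid cohomology of the cotangent Lie algebroid $T^*M$ is exactly the Poisson cohomology $H^\bullet_\pi(M)$. On the other hand, $\Sigma(M,\pi)$ is proper, so fact (b) shows that its differentiable cohomology vanishes above degree $0$; in particular $H^1(\Sigma(M,\pi))=0$. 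Combining these two inputs yields $H^1_\pi(M)\cong H^1(\Sigma(M,\pi))=0$, as desired.

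There is no genuine obstacle in this argument: it is a direct application of the quoted structural results, and the only point requiring attention is that one must work with the \emph{canonical} integration rather than an arbitrary proper integration, so that source $1$-connectedness --- and hence the homological $1$-connectedness of the $s$-fibers needed for the Van Est isomorphism in degree $1$ --- is guaranteed. Were one given only a proper integration with possibly non-simply-connected $s$-fibers, fact (a) would not apply in degree $1$ and the conclusion could genuinely fail; this is precisely why the statement is phrased for strong proper type rather than merely proper type.
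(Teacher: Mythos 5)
Your proof is correct and follows the paper's own argument essentially verbatim: both use the Van Est isomorphism in degree $1$ (valid because the $s$-fibers of the canonical integration $\Sigma(M,\pi)$ are $1$-connected) together with the vanishing of differentiable cohomology of proper groupoids above degree $0$. Your closing remark on why strong properness, rather than mere properness, is needed is a correct and worthwhile observation that the paper leaves implicit.
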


\begin{proof} 
The symplectic groupoid $\Sigma(M,\pi)$, together with the source map, is a local trivial fibration
over $M$ whose fibers are 1-connected. Hence we have $H^1_\pi(M)\cong H^1(\Sigma(M,\pi))=0$.
\end{proof}

More generally, the previous argument applies to show that $H^1(A)=0$ for any Lie algebroid with a strong proper integration, 
in particular, for Dirac manifolds of strong proper type.

The second Poisson cohomology, however, does not vanish since we have the following analogue of non-vanishing
of the cohomology class of the symplectic form on a compact symplectic manifold:

\begin{proposition}[\cite{CF3}]
\label{prop:h2:non-vanishing}
Let $(M,\pi)$ be a Poisson manifold of strong compact type. Then $[\pi]\in H^2_\pi(M)$ is non-trivial.
\end{proposition}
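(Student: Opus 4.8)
The plan is to argue by contradiction, using the compactness of the canonical integration. Write $(\Sigma,\Omega):=(\Sigma(M,\pi),\Omega_{\Sigma})$; by the strong compact type hypothesis this is a \emph{compact} symplectic manifold with $\dim\Sigma=2\dim M$, so $\Omega^{\dim M}$ is a volume form and $\int_{\Sigma}\Omega^{\dim M}\neq 0$, whence $[\Omega]\neq 0$ in $H^{\bullet}_{dR}(\Sigma)$. (Note also that $M=s(\Sigma)$ is compact, since $s$ is a surjective submersion from a compact space.) Now suppose, towards a contradiction, that $[\pi]=0$ in $H^{2}_{\pi}(M)$, i.e.\ $\pi=\d_{\pi}X=[\pi,X]$ for some $X\in\X^{1}(M)$. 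By the graded skew-symmetry of the Schouten bracket, $[\pi,X]=-\mathcal{L}_{X}\pi$, so this says exactly that
\[ \mathcal{L}_{X}\pi=-\pi, \]
that is, $X$ is a conformal (Liouville-type) vector field for the Poisson structure.

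First I would integrate this infinitesimal identity. Since $M$ is compact, $X$ is complete; let $\phi_{t}$ be its flow. From $\mathcal{L}_{X}\pi=-\pi$ one gets $\tfrac{\d}{\d t}\phi_{t}^{*}\pi=\phi_{t}^{*}\mathcal{L}_{X}\pi=-\phi_{t}^{*}\pi$, hence $\phi_{t}^{*}\pi=e^{-t}\pi$, and therefore each $\phi_{t}$ is a Poisson diffeomorphism $\phi_{t}\colon(M,\pi)\to(M,e^{t}\pi)$. Two facts then transport this to $\Sigma$. First, rescaling $\pi$ by a nonzero constant $c$ leaves the underlying Lie groupoid unchanged but rescales the symplectic form: $\Sigma(M,c\pi)=(\Sigma,\tfrac{1}{c}\Omega)$; this is precisely the warped product construction (\ref{eq:warped}) applied with a constant Casimir. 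Second, a Poisson diffeomorphism lifts, by functoriality of the canonical integration, to an isomorphism of symplectic groupoids. Applying the second fact to $\phi_{t}$ and using the first to identify $\Sigma(M,e^{t}\pi)$ with $(\Sigma,e^{-t}\Omega)$, I obtain a one-parameter group $\Phi_{t}$ of Lie groupoid automorphisms of $\Sigma$, with $\Phi_{0}=\mathrm{id}$, satisfying $\Phi_{t}^{*}(e^{-t}\Omega)=\Omega$, i.e.\ $\Phi_{t}^{*}\Omega=e^{t}\Omega$. Differentiating at $t=0$ and setting $\widehat{X}:=\tfrac{\d}{\d t}\big|_{0}\Phi_{t}$ (the lift of $X$ to $\Sigma$), Cartan's formula together with $\d\Omega=0$ yields
\[ \mathcal{L}_{\widehat{X}}\Omega=\Omega,\qquad \Omega=\d\big(\iota_{\widehat{X}}\Omega\big). \]
Thus $\Omega$ is exact on the compact manifold $\Sigma$, forcing $[\Omega]=0$ and hence $\int_{\Sigma}\Omega^{\dim M}=0$, a contradiction. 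Therefore $[\pi]\neq 0$.

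I expect the only delicate point to be the bookkeeping in the second paragraph: verifying the scaling rule $\Sigma(M,c\pi)=(\Sigma,\tfrac{1}{c}\Omega)$ and checking that the functorial lifts $\Phi_{t}$ genuinely assemble into a one-parameter group compatible with this rescaling, so that differentiation at $t=0$ produces a \emph{nonzero} multiple of $\Omega$ (the precise constant, and even its sign, is irrelevant --- all that matters is that it does not vanish). Once this compatibility is in place, the exactness of $\Omega$ and the resulting contradiction with compactness are immediate, and the argument parallels the classical fact that a compact symplectic manifold admits no conformal (Liouville) vector field for its symplectic form.
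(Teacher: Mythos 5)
Your proposal is correct and follows essentially the same route as the paper: exactness of $[\pi]$ produces a conformal vector field whose flow gives Poisson diffeomorphisms $(M,\pi)\to(M,e^{t}\pi)$, these lift (by Lie's second theorem / functoriality of the canonical integration) to groupoid automorphisms rescaling $\Omega$, and differentiating shows $\Omega$ is exact, contradicting compactness of $\Sigma(M,\pi)$. The only cosmetic differences are your sign convention for the Schouten bracket and your spelling out of $\int_{\Sigma}\Omega^{\dim M}\neq 0$, which the paper leaves implicit.
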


%\marginpar{Does this holds without "strong"? Probably assuming $\pi\ne0$, since obviously this fails for the zero Poisson structure on a compact integral affine manifold}

\begin{proof}
Note that $\pi\ne 0$. The exactness of $[\pi]\in H^2_\pi(M)$ is equivalent to the existence of $X\in \mathfrak{X}(M)$
such that 
\[\mathfrak{L}_X\pi=\pi.\]
Since $M$ is compact the vector field $X$ integrates to a flow $\phi_t$ which exponentiates the Poisson bivector.
In particular, for each $t\in \R$, we obtain Poisson diffeomorphisms
\[\phi_t\colon (M,\pi)\rightarrow (M,\mathrm{e}^t\pi),\]
which induce Lie algebroid isomorphisms of the underlying cotangent Lie algebroids. By Lie's second 
theorem for symplectic groupoids these algebroid isomorphisms have unique lifts to  symplectic groupoid isomorphisms:
\[\Phi_t\colon (\Sigma(M,\pi),\Omega)\rightarrow (\Sigma(M,\pi),\mathrm{e}^t\Omega).
\]
% \marginpar{$\mathrm{e}^t\Omega$ or $\mathrm{e}^{-t}\Omega$?}
The diffeomorphisms $\Phi_t$ fit into a flow. If $Y$ is the corresponding vector field, then
we must have
\[\Omega=\d i_Y\Omega,\]
which is not possible since $\Sigma(M,\pi)$ is compact.
\end{proof}

From the non-vanishing of the second Poisson cohomology we deduce another interesting property:

\begin{proposition}
\label{prop:no-fixed-points}
A Poisson manifold of strong compact type has no fixed points. 
\end{proposition}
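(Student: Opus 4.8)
The plan is to argue by contradiction: I would show that a fixed point $x_0$ of $(M,\pi)$ (i.e.\ a zero of $\pi$, equivalently a zero-dimensional symplectic leaf $\{x_0\}$) forces the class $[\pi]\in H^2_\pi(M)$ to be Poisson-exact, contradicting Proposition~\ref{prop:h2:non-vanishing}. Recall from the proof of that proposition that $[\pi]$ is exact exactly when there is a global vector field $X\in\X(M)$ with $\mathfrak{L}_X\pi=\pi$, equivalently $\pi=\d_\pi(-X)$; so the goal is to manufacture such an $X$ from the fixed point.

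First I would pin down the isotropy data at $x_0$. Since the leaf through $x_0$ is the single point $\{x_0\}$, the source fibre $s^{-1}(x_0)$ coincides with the isotropy group $\Sigma_{x_0}(M,\pi)$. For a Poisson manifold of strong compact type the canonical integration $\Sigma(M,\pi)$ is compact and source $1$-connected, so $\Sigma_{x_0}$ is a compact, $1$-connected Lie group integrating $\gg_{x_0}=T^*_{x_0}M$. Hence $\gg_{x_0}$ is semisimple of compact type and in particular $Z(\gg_{x_0})=0$; this is consistent with condition (iii) of Theorem~\ref{prop-compactness-Poisson-homotopy}, since $\pi_2(\{x_0\})=0$ gives $\cN_{x_0}=0$, a lattice in the trivial space. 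Being semisimple of compact type, $\gg_{x_0}$ falls under Conn's smooth linearization theorem, which yields a Poisson diffeomorphism from a neighbourhood of $x_0$ onto a neighbourhood of the origin in $\gg_{x_0}^*$ with its linear Poisson structure. On this linear model the Euler (radial) vector field $E$ satisfies $\mathfrak{L}_E\pi_{\lin}=-\pi_{\lin}$, so that $\pi=\d_\pi E$; thus $\pi$ is Poisson-exact on a neighbourhood of $x_0$, with a primitive vanishing at $x_0$.

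The main obstacle is globalization: a primitive defined only near $x_0$ does not make $[\pi]\in H^2_\pi(M)$ vanish. To close the argument I would pass to the compact symplectic groupoid $(\Sigma,\Omega)=(\Sigma(M,\pi),\Omega_{\Sigma(M,\pi)})$ and exploit the special fibre over $x_0$. There $s^{-1}(x_0)=\Sigma_{x_0}$ is a compact $1$-connected Lie group, hence $2$-connected ($\pi_2$ vanishes for any Lie group), and it sits inside $(\Sigma,\Omega)$ as a Lagrangian subgroup: at a fixed point $s^{-1}(x_0)=t^{-1}(x_0)=\Sigma_{x_0}$, and the symplectic orthogonality of source and target fibres forces $\Omega|_{\Sigma_{x_0}}=0$. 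By Weinstein's Lagrangian neighbourhood theorem, combined with the identification of $\Sigma$ near this fibre with $(T^*\Sigma_{x_0},\omega_{\can})$ coming from the linear model of Section~\ref{sec:linear:Poisson}, the form $\Omega$ is exact near $s^{-1}(x_0)$ with Liouville/radial primitive. The crux is to turn this into a global obstruction, mirroring the proof of Proposition~\ref{prop:h2:non-vanishing}: the induced scaling should produce $\Omega=\d(i_Y\Omega)$, which is impossible on the compact groupoid since $\int_\Sigma\Omega^{\dim M}>0$. I expect the delicate point to be exactly this passage, because the local Liouville field does \emph{not} extend to a global primitive on the compact $\Sigma$; the tension between the local exactness forced by the semisimple fixed point and the global non-exactness guaranteed by compactness is what I anticipate will yield the contradiction, and controlling it rigorously (e.g.\ via a Van~Est or averaging argument using the $2$-connectedness of $s^{-1}(x_0)$) is where the real work lies.
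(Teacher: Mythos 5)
There is a genuine gap, and you have in effect flagged it yourself: the entire argument hinges on ``turning the local picture into a global obstruction,'' and that passage is never carried out. Local Poisson-exactness of $\pi$ near $x_0$ (via Conn's linearization and the Euler vector field), or local exactness of $\Omega$ near the Lagrangian fibre $s^{-1}(x_0)$, contains no global information: every symplectic form is locally exact, and $[\pi]\in H^2_\pi(M)$ is a global class, so nothing you construct in a neighbourhood of $x_0$ can contradict Proposition~\ref{prop:h2:non-vanishing}. The scaling vector field you would need for $\Omega=\d(i_Y\Omega)$ must be \emph{globally} defined on $\Sigma$, and the local Liouville field gives you no handle on it. So the proof as written does not close, and the machinery you invoke (Conn's theorem, the Lagrangian neighbourhood theorem) is a detour that cannot close it.

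The irony is that you mention the correct mechanism only parenthetically at the very end. The paper's proof uses nothing local at all: at a fixed point the $s$-fibre is $\Sigma_{x_0}$, a compact $1$-connected Lie group, hence $2$-connected because $\pi_2(G)=0$ for any Lie group; since $s$ is a locally trivial fibration, \emph{all} $s$-fibres are $2$-connected; by the Van Est theorem (item (a) at the start of Section~\ref{sec:basic-properties}) the map $H^2(\Sigma(M,\pi))\to H^2_\pi(M)$ is then an isomorphism, and by item (b) the differentiable cohomology of the proper groupoid $\Sigma(M,\pi)$ vanishes in positive degrees. Hence $H^2_\pi(M)=0$, contradicting Proposition~\ref{prop:h2:non-vanishing}. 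Note that this kills the whole group $H^2_\pi(M)$, not just the class $[\pi]$, and requires no normal form whatsoever. If you want to salvage your write-up, discard the linearization and Lagrangian-neighbourhood material, keep your (correct) observations that $s^{-1}(x_0)=\Sigma_{x_0}$ is a compact $1$-connected Lie group, add the one missing fact $\pi_2(\Sigma_{x_0})=0$, and run the Van Est argument; that is the entire proof.
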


\begin{proof} Assume that $(M, \pi)$ is of strong compact type with at least one fixed point $x_0$.
Then the symplectic groupoid $\Sigma(M,\pi)$, together with the source map, is a local trivial fibration
over $M$ whose fiber above $x_0$ is by assumption $\Sigma_{x_0}$. Since this fiber is 2-connected,
it follows that the s-fibers of $\Sigma$ are 2-connected. Hence we must have $H^2_\pi(M)\cong H^2(\Sigma(M,\pi))=0$, 
contradicting Proposition \ref{prop:h2:non-vanishing}. 
\end{proof}

The vanishing of $H^{1}_{\pi}(M)$ for a strong proper Poisson manifold has some important consequences. First, it implies that any 
action by Poisson diffeomorphism is Hamiltonian, which obvious fails for arbitrary Poisson manifolds: 

 \begin{proposition} 
Let $(M,\pi)$ be a strong proper Poisson manifold. Then every Poisson action is Hamiltonian.
\end{proposition}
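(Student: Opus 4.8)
The plan is to lift the action to the canonical integration and then kill the resulting moment-map cocycle using the vanishing of differentiable cohomology for proper groupoids. Since $(M,\pi)$ is of strong proper type, its canonical integration $\Sigma(M):=\Sigma(M,\pi)$ is a smooth, Hausdorff, source $1$-connected symplectic groupoid which is proper; in particular $(M,\pi)$ is integrable. Let $G\times M\to M$ be an action by Poisson diffeomorphisms. By the result of \cite{FOR} recalled earlier, this action lifts to a Hamiltonian action on $(\Sigma(M),\Omega)$ by symplectic groupoid automorphisms whose moment map is a smooth groupoid $1$-cocycle $J\colon\Sigma(M)\to\gg^*$, additive in the sense that $J(g_1g_2)=J(g_1)+J(g_2)$. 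The key point recalled there is that the action on $M$ is Hamiltonian precisely when $J$ is exact, i.e. $J=s^*\mu-t^*\mu$ for some smooth $\mu\colon M\to\gg^*$, in which case $\mu$ is the sought-after moment map.

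First I would reinterpret this cohomologically. The additivity of $J$ says exactly that it is a $1$-cocycle for the differentiable cohomology of $\Sigma(M)$ with values in the \emph{trivial} representation $\gg^*$ (no $\Ad^*$-twist enters), and exactness is precisely the condition that $J$ be the coboundary $s^*\mu-t^*\mu$ of the $0$-cochain $\mu\in C^\infty(M,\gg^*)$. Thus the action is Hamiltonian if and only if the class $[J]$ vanishes in $H^1(\Sigma(M);\gg^*)$. Because the coefficients are trivial and $\gg^*$ is finite dimensional, this group is $H^1(\Sigma(M))\otimes\gg^*$, so it suffices to show $H^1(\Sigma(M))=0$.

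This is now immediate: $\Sigma(M)$ is proper, so by the theorem of \cite{Cr1} recalled as property (b) above its differentiable cohomology vanishes in positive degrees, whence $H^1(\Sigma(M))=0$ and $[J]=0$. (This is the global counterpart of the vanishing $H^1_\pi(M)=0$ from Proposition \ref{prop:first:cohomology}, to which it is tied through the van Est map, the source fibers being $1$-connected.) Therefore $J=s^*\mu-t^*\mu$ and the action is Hamiltonian with moment map $\mu$. The one step that needs care---and which I regard as the main obstacle---is the cohomological bookkeeping of the second paragraph: identifying $J$ as a class in the differentiable cohomology of $\Sigma(M)$ with the correct, genuinely trivial, coefficient system, so that the scalar vanishing theorem applies coordinatewise; once this is in place the conclusion is forced. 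One should also note that the existence of the lift and of the cocycle $J$ from \cite{FOR} does not require $G$ to be connected, so no such hypothesis is needed.
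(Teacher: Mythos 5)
Your proof is correct and follows essentially the same route as the paper: lift the Poisson action to a Hamiltonian action on $\Sigma(M,\pi)$ via \cite{FOR}, observe that the obstruction to descending is the class of the moment-map $1$-cocycle in differentiable groupoid cohomology, and kill it using the vanishing of $H^{k}(\G)$ for $k>0$ when $\G$ is proper. The extra bookkeeping you supply (trivial $\gg^*$-coefficients, coordinatewise application of the scalar vanishing theorem) is sound and simply makes explicit what the paper leaves implicit.
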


\begin{proof}
It was already mentioned that the results of \cite{FOR} show that any Poisson action has a canonical 
lift to a Hamiltonian action on the Weinstein groupoid $(\Sigma(M),\Omega_\Sigma)$. The moment map for the lifted
action is a 1-cocycle in the differentiable groupoid cohomology of $\Sigma(M)\tto M$, and the Poisson action is Hamiltonian 
if and only if this 1-cocycle is exact. This happens in particular when $\Sigma(M)$ is proper.
\end{proof}

\subsection{Invariant measures}
For a Poisson manifold $(M,\pi)$ the vanishing of $H^{1}_{\pi}(M)$ guarantees the existence of densities (hence also of measures)
invariant under all Hamiltonian flows, i.e. sections $\rho$ of the density bundle 
such that:
\[ \Lie_{X_h}\rho=0,\quad \forall h\in C^\infty(M).\]
In fact, the obstruction to the existence of such a density lies precisely in $H^1_\pi(M)$, and it is the so-called modular class of 
the Poisson manifold $(M, \pi)$ \cite{We2}. %(cf. \cite{CM}) 
When the modular class vanishes we say that $(M,\pi)$ is \textbf{unimodular}.  

Actually, unimodularity holds under the weakest compactness assumption, although the first Poisson cohomology group itself need not vanish:
 
\begin{proposition} 
The modular class of any proper Poisson manifold vanishes. 
\end{proposition}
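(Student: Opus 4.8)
The plan is to realize the modular class, which sits in $H^1_\pi(M)=H^1(T^*M)$ and obstructs the existence of a Hamiltonian-invariant density, as the van Est image of a modular cocycle of an integrating \emph{proper} symplectic groupoid, and then to kill the latter using the vanishing of differentiable cohomology of proper groupoids.

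First I would fix an s-connected proper symplectic integration $(\G,\Omega)\tto M$ of $(M,\pi)$, which exists by the proper-type hypothesis. Setting $q=\dim M$, so $\dim\G=2q$, the manifold $\G$ carries the Liouville density $\Lambda=|\Omega^q/q!|$. Given any positive density $\mu$ on $M$, let $\nu^s$ and $\nu^t$ be the unique source- and target-fiber densities determined by $\Lambda=\nu^s\,s^*\mu=\nu^t\,t^*\mu$. Comparing $\nu^s$ and $\nu^t$ under translation produces a smooth additive groupoid $1$-cocycle $c_\mu\colon\G\to\R$, i.e. $c_\mu(g_1g_2)=c_\mu(g_1)+c_\mu(g_2)$ for composable $g_1,g_2$. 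A change $\mu\mapsto e^{g}\mu$ replaces $c_\mu$ by $c_\mu+s^*g-t^*g$, so the class $[c_\mu]\in H^1(\G;\R)$ is independent of $\mu$; this is the modular class of the symplectic groupoid, and because we use the canonical Liouville volume together with a fixed base density it lives genuinely in \emph{trivial} $\R$-coefficients.

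Next I would compute its van Est image under $\mathrm{VE}\colon H^1(\G)\to H^1(T^*M)=H^1_\pi(M)$. Differentiating $c_\mu$ in $A$-directions at the unit section yields an element of $\Gamma(A^*)=\X^1(M)$ (here $A=T^*M$, so $A^*=TM$), and a direct calculation identifies it with the modular vector field $u_\mu$ of $(M,\pi)$, characterized by $u_\mu(f)=\Lie_{\pi^\sharp(\d f)}\mu/\mu$. Hence $\mathrm{VE}[c_\mu]=[u_\mu]$ is exactly the modular class. Since $\G$ is proper, its differentiable cohomology vanishes above degree $0$, so $[c_\mu]=0$ in $H^1(\G;\R)$ and therefore the modular class $\mathrm{VE}[c_\mu]=0$.

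The main obstacle is the middle step: defining the groupoid cocycle $c_\mu$ with the correct left/right translations and the fibration twist, and verifying that its van Est differential is precisely $u_\mu$. Everything else reduces to the cited vanishing $H^{>0}(\G;\R)=0$ for proper $\G$ together with naturality of $\mathrm{VE}$. An alternative route that bypasses the explicit cocycle is to construct the invariant density directly: using the source-fiber densities $\nu^s$ as a Haar system together with a cut-off function on the proper groupoid, one averages $\mu$ over the source fibers; properness ensures the average is a well-defined positive density on $M$ invariant under all Hamiltonian flows, which is the asserted unimodularity.
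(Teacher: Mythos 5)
Your proof is correct and follows essentially the same route as the paper: the paper simply cites \cite{Cr1} for the fact that the modular class lies in the image of the van Est map $\mathrm{VE}:H^1(\G)\to H^1_\pi(M)$ and then invokes the vanishing of the differentiable cohomology of a proper groupoid above degree $0$. Your explicit construction of the groupoid cocycle $c_\mu$ via the Liouville density fills in the content of that citation, but the argument is the same.
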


\begin{proof} 
The modular class is in the image of the Van Est map $\textrm{VE}:H^1(\G)\to H^1_\pi(M)$, for any integration $\G$ (see \cite{Cr1}).
Since for a proper groupoid $H^k(\G)$ vanishes above degree 0, any proper Poisson manifold is unimodular.
\end{proof}

The situation is even better for s-proper Poisson manifolds: using an s-proper integration $(\cG, \Omega)$, one can produce right away
such an invariant density. At the level of measures, we just push-forward via the source map the Liouville measure induced by $\Omega$;
at the level of densities, that means that we define our density as the integration along the s-fibers of the Liouville density $|\Omega^n|/n!$
(of course, the reason to work with densities instead of volume forms comes from the fact that densities can be integrated over fibers, 
to give densities on the base, without any orientability assumptions). Therefore, we define the density on $M$ given by 
\[ \rho_{\mathrm{DH}}^{\cG}:= \int_{s} |\Omega^n|/n!\quad (n=\dim M=1/2\dim \G).\]
This will be called {\bf the Duistermaat-Heckman density} on $M$ induced by $\cG$.

\begin{proposition}\label{pro:fibvol} 
For any s-proper Poisson manifold $(M,\pi)$, the Duistermaat-Heckman density $\rho_{\mathrm{DH}}^{\cG}$ induced by an s-proper integration
$(\cG, \Omega)$ is an Hamiltonian invariant density for $(M, \pi)$. 
\end{proposition}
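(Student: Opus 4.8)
The plan is to lift the Hamiltonian flow on $M$ to a volume-preserving flow on the integrating groupoid $\cG$, and then to transport the resulting invariance back down through the fibre integration $\int_s$ (which I write as the push-forward of densities $s_*$). Writing $\lambda := |\Omega^n|/n!$ for the Liouville density on $\cG$, so that $\rho_{\mathrm{DH}}^{\cG} = s_*\lambda$, the goal $\Lie_{X_h}\rho_{\mathrm{DH}}^{\cG}=0$ reduces to producing a vector field $\widetilde{X}$ on $\cG$ which (a) preserves $\lambda$ and (b) is $s$-related to $X_h$ up to sign; for then naturality of fibre integration along a proper submersion will finish the argument.

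The natural candidate is the Hamiltonian vector field $\widetilde{X} := X_{s^*h}$ of the pulled-back function $s^*h\in C^\infty(\cG)$. Two facts make it work, both coming from $(\cG,\Omega)$ being a symplectic groupoid. First, being Hamiltonian, its flow consists of symplectomorphisms of $(\cG,\Omega)$ and hence preserves $\lambda$. Second, since the source map is anti-Poisson, $\{s^*h,s^*g\}_\cG = -\,s^*\{h,g\}_M$, so for every $g\in C^\infty(M)$ one has $X_{s^*h}(s^*g) = -\,s^*(X_h\cdot g)$; this says precisely that $X_{s^*h}$ is $s$-related to $-X_h$. Consequently, if $\psi_\tau$ denotes the (local) flow of $X_h$ and $\Psi_\tau$ that of $X_{s^*h}$, then $s\circ\Psi_\tau = \psi_{-\tau}\circ s$ while $(\Psi_\tau)_*\lambda = \lambda$.

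I would then conclude by functoriality of the push-forward of measures: from $s\circ\Psi_\tau=\psi_{-\tau}\circ s$ one gets $(\psi_{-\tau})_*(s_*\lambda) = s_*((\Psi_\tau)_*\lambda) = s_*\lambda$, so $\rho_{\mathrm{DH}}^{\cG}=s_*\lambda$ is invariant under $\psi_{-\tau}$; replacing $\tau$ by $-\tau$ yields invariance under every Hamiltonian flow, i.e. $\Lie_{X_h}\rho_{\mathrm{DH}}^{\cG}=0$ for all $h$. The $s$-properness hypothesis enters here exactly where it is needed: compactness of the $s$-fibres is what makes $s_*\lambda$ a well-defined (finite) density and legitimizes interchanging push-forward with fibre integration.

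The step I expect to require the most care is this last one: cleanly justifying that fibre integration intertwines the two flows. The conceptual crux, however, is the coincidence special to symplectic groupoids that the single lift $X_{s^*h}$ is simultaneously volume-preserving on $\cG$ (because it is Hamiltonian) and $s$-projectable, covering $-X_h$ (because $s$ is anti-Poisson); everything else is routine once one keeps track of the compactness of the $s$-fibres. One should also note that completeness of $X_h$ is not needed, since $\Lie_{X_h}\rho_{\mathrm{DH}}^{\cG}=0$ is an infinitesimal identity and only short-time local flows are used.
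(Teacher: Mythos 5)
Your proposal is correct and follows essentially the same route as the paper: lift $h$ to $s^*h$, observe that $X_{s^*h}$ is $s$-related to $\pm X_h$ and preserves the Liouville density, and push the invariance down through $\int_s$. The only difference is that you track the sign coming from $s$ being anti-Poisson (the paper's convention makes $t$ the Poisson map), which is harmless since the conclusion $\Lie_{X_h}\rho_{\mathrm{DH}}^{\cG}=0$ is insensitive to it.
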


\begin{proof}
Since the source is a Poisson map, for any $h\in C^\infty(M)$ 
the Hamiltonian vector fields $X_h$ and $X_{s^*h}$
are $s$-related, and therefore 
\[\Lie_{X_h}\rho_{\mathrm{DH}}^{\cG}=\int_s \Lie_{X_{s^*h}}\frac{|\Omega^n|}{n!}=0.\]
 \end{proof}

\begin{remark}
We will show in \cite{CFMb,CFMc} that the transverse integral affine structure present on any PMCT defines another Hamiltonian invariant density $\rho_{\mathrm{Aff}}^M$. In the the s-proper case there is a polynomial Duistermaat-Heckman formula relating
the integral affine density $\rho_{\mathrm{Aff}}^M$ and the Duistermaat-Heckman density $\rho_{\mathrm{DH}}^\cG$.
\end{remark}

\subsection{Poisson cohomology ring and Poincar\'e duality pairing}

For a general Poisson manifold the Poisson cohomology ring is infinite dimensional and extremely hard to compute.
However, for an s-proper Poisson manifold the cohomology is essentially a finite dimensional object, as we now explain.

Let  $(\cG,\Omega)$ be an s-proper symplectic integration of $(M,\pi)$. Then $s:\cG\to M$ is a fibration with compact fibers,
hence it is a locally trivial fibration with structural group the diffeomorphism of the fiber. We consider the associated vector 
bundle
\[\cH^\bullet\to M\]  
whose fiber is the total de Rham cohomology of the $s$-fiber. This bundle carries a right $\cG$-action induced from the right action of $\cG$ on itself, 
i.e., $\cH^\bullet\to M$ is a representation of $\cG$. The fiberwise cup product induces a ring structure on $\Gamma(\cH^\bullet)$ and 
the invariant sections $\Gamma(\cH^\bullet)^\cG$ form a subring.

\begin{proposition}\label{thm:lalgcohom}
Let $(M,\pi)$ be an s-proper Poisson manifold. For any s-proper integration $(\G,\Omega)$ of $(M,\pi)$ there is a canonical ring isomorphism:
\[
 H_\pi^\bullet(M)\cong \Gamma(\cH^\bullet)^\cG.
\]
 
% \begin{enumerate}[(i)]
% \item The cohomology ring $H_L(M)$
% is isomorphic to the subring of invariant sections $\Gamma(E)^\cG$, where $E\rightarrow M$ is the $\cG$-vector bundle
%  whose fiber over $x$ is the cohomology ring of the source fiber $s^{-1}(x)\subset \cG$.
%  
% \item If $M$ is of finite type then $H_L(M)$ is a finitely generated module over $H^0_L(M)=C^\infty(M/\cF)$, the space of Casimirs.
%  \end{enumerate}
\end{proposition}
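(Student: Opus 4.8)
The plan is to realize both sides of the asserted isomorphism as cohomologies attached to a single complex on $\G$, the \emph{leafwise de Rham complex along the source foliation}. Since $(M,\pi)$ is s-proper, $s\colon\G\to M$ is a proper submersion, so by Ehresmann's theorem (already invoked above) it is a locally trivial fibration with compact fibre. In particular the source fibres foliate $\G$, and I would form the complex
\[ \Omega^\bullet_s(\G):=\Gamma\big(\wedge^\bullet(\ker\d s)^*\big), \]
with the leafwise de Rham differential $\d_s$ and with the natural \emph{right} $\G$-action by translation on source fibres. The idea is to compute the cohomology of this complex, then of its subcomplex of right-invariant forms, and finally to compare the two using properness.

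First, I would identify $H^\bullet(\Omega^\bullet_s(\G),\d_s)$ with $\Gamma(\cH^\bullet)$. Choosing a fibrewise Riemannian metric on $\ker\d s$ and applying Hodge theory to the \emph{compact} source fibres, the leafwise-harmonic forms assemble into a smooth vector bundle over $M$ which is canonically the bundle $\cH^\bullet$ of fibrewise de Rham cohomology; this yields a $\G$-equivariant isomorphism $H^\bullet(\Omega^\bullet_s(\G),\d_s)\cong\Gamma(\cH^\bullet)$, compatible with the fibrewise wedge and the cup product. Compactness of the s-fibres is essential here, as it is what makes the harmonic representatives vary smoothly. Second, restriction to the unit section $M\subset\G$ identifies the right-invariant leafwise $q$-forms with $\Gamma(\wedge^q A^*)=\Omega^q(A)$ and carries $\d_s$ to the Lie algebroid differential $\d_A$; hence the subcomplex $\Omega^\bullet_s(\G)^\G$ of right-invariant forms computes $H^\bullet(A)$, which for $A=T^*M$ equals $H^\bullet_\pi(M)$. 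This second step is standard Lie theory and uses neither the symplectic form nor compactness.

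Finally, I would compare the two via averaging. Since $\G$ is proper, a Haar system together with a cutoff function produce an averaging operator $\mathrm{Av}\colon\Omega^\bullet_s(\G)\to\Omega^\bullet_s(\G)^\G$ which is a $\d_s$-chain map restricting to the identity on the invariant subcomplex $\iota\colon\Omega^\bullet_s(\G)^\G\hookrightarrow\Omega^\bullet_s(\G)$ — precisely the mechanism behind the vanishing of the differentiable cohomology of proper groupoids recalled above. Then $\iota\circ\mathrm{Av}$ is an idempotent chain endomorphism whose induced map on $H^\bullet(\Omega^\bullet_s(\G))\cong\Gamma(\cH^\bullet)$ is fibrewise averaging, i.e.\ the projection onto $\Gamma(\cH^\bullet)^\G$. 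Combining with the two previous steps, $\iota$ induces
\[ H^\bullet_\pi(M)=H^\bullet(A)\;\xrightarrow{\ \sim\ }\;\Gamma(\cH^\bullet)^\G, \]
and multiplicativity is automatic, since $\iota$ is an inclusion of differential graded algebras and the iso is thus a ring map.

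The main obstacle is this last step: one must make the averaging argument rigorous on these infinite-dimensional leafwise complexes, showing that taking $\G$-invariants is exact and therefore commutes with passage to cohomology, and that $\mathrm{Av}$ sends leafwise-exact forms to forms that are exact \emph{within} the invariant subcomplex. The required input is again the combination of compactness of the source fibres with properness of $\G$, which is exactly what allows one to average while simultaneously controlling the differential and the primitives.
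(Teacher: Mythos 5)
Your proposal follows essentially the same route as the paper: both realize $H^\bullet_\pi(M)$ as the cohomology of the $\G$-invariant part of the fibrewise de Rham complex $(\Omega^\bullet_s(\G),\d_s)$, identify $H^\bullet(\Omega^\bullet_s(\G),\d_s)$ with $\Gamma(\cH^\bullet)$, and then use properness to average (via a normalized left-invariant Haar measure on the $t$-fibres) so that the inclusion of the invariant subcomplex induces an isomorphism onto $\Gamma(\cH^\bullet)^\cG$. The "main obstacle" you flag at the end is exactly the point the paper's averaging argument handles, and your idempotent formulation of it is sound.
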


\begin{proof}
Let $(\Omega_s(\cG),\d_s)$ denote the fiberwise de Rham chain complex  of $s: \cG\to M$ and let $H(\Omega_s(\cG),\d_s)$ denote its homology.
First, recall that there is a canonical isomorphism
\begin{equation}\label{cohom-isom-mult}
(\X^{\bullet}(M),\d_\pi)\cong (\Omega_s^\bullet(\cG)^\G,\d_s)
\end{equation}
between the chain complex computing Poisson cohomology and the $\G$-invariant part of the complex $(\Omega_s(\cG),\d_s)$.

Now, there is an obvious ring homomorphism
% \begin{equation}\label{cohom-isom-fiberwise}
\[
H(\Omega_s^\bullet(\cG),\d_s)\rightarrow \Gamma(\cH^\bullet),
\]
% \end{equation}
which sends the class of a fiberwise closed form to the section whose value that at $x$ is the de Rham class of the 
restriction of the the form to $s^{-1}(x)$. In fact, it is well-known that this map is an isomorphism.
Restricting to $\G$-invariant forms we obtain a ring monomorphism
\begin{equation}\label{cohom-isom-Ginv}
H(\Omega_s^\bullet(\cG)^\G,\d_s)\rightarrow \Gamma(\cH^\bullet)^\cG
 \end{equation}
We claim that this is an isomorphism. Given $c\in  \Gamma(\cH^\bullet)^\cG$ we can represent it by  $\a\in \Omega_s^\bullet(\cG)$ fiberwise closed.
In principle $\a$ need not be $\G$-invariant, but this can be fixed by a standard averaging argument: since $\cG$ is a (source-)proper
groupoid, we can choose a left-invariant Haar measure $\d\nu$, i.e., a measure on each $t$-fiber of total mass 1, which
is invariant by the left action of $\G$. Then we form the average $\int_t\a ~\d\nu$: it is the fiberwise closed form whose restriction to $s^{-1}(x)$ is given by
\[\left(\int_t\a  ~\d\nu\right)_{ s^{-1}(x)}:=\int_{t^{-1}(x)}\a_{s^{-1}(s(h))}(h_* \cdot)~ \d\nu_{t^{-1}(x)},\quad h\in t^{-1}(x).\] 
This average lies in $\Omega_s^\bullet(\cG)^\G$. Moreover, since $\d\nu$ has total mass 1 on each $t$-fiber and $c\in  \Gamma(\cH^\bullet)^\G$, the fiberwise
cohomology class of $\int_t\a ~\d\nu$ is the same as that of $c$. This proves the proposition.

\end{proof}

Let us assume now that the s-proper Poisson manifold $(M,\pi)$ is oriented. This implies that the source fibers of
any s-connected, s-proper symplectic integration $(\cG,\Omega)$ can be coherently oriented. An orientation on $s^{-1}(x)$
defines the classical Poincar\'e duality pairing on $\cH^\bullet_x$ taking values in $\cH^0_x\equiv\R$;
since the fibers are coherently oriented, then the fiberwise pairings define a pairing on 
$\Gamma(\cH^\bullet)$ with values in 
$\Gamma(\cH^0)\equiv C^\infty(M)$,
inducing a corresponding pairing on  Poisson cohomology:

\begin{theorem} Let $(M,\pi)$ be an oriented s-proper Poisson manifold. Then any s-connected, s-proper symplectic 
 integration $(\cG,\Omega)$ canonically defines a {\bf Poincar\'e duality pairing}
 \[\langle \cdot,\cdot\rangle_\cG:H^{\bullet}_\pi(M)\times H^{\mathrm{top}-\bullet}_\pi(M) \longrightarrow C^\infty(M/\cF)\]
 which is non-degenerate.
\end{theorem}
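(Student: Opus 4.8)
The plan is to carry the whole pairing over to the fiberwise-cohomology bundle $\cH^\bullet\to M$ of Proposition \ref{thm:lalgcohom} and reduce it to ordinary Poincar\'e duality on the compact, oriented source fibers. Since $\dim s^{-1}(x)=\dim\cG-\dim M=\dim M=:n$, the top Poisson degree is $\top=n$ and it matches the top de Rham degree of an $s$-fiber. Under the ring isomorphism $H^\bullet_\pi(M)\cong\Gamma(\cH^\bullet)^\cG$ a class in $H^k_\pi(M)$ becomes a $\cG$-invariant section $c$ of $\cH^k$ whose value $c(x)$ is a de Rham class on $s^{-1}(x)$; transporting the pairing along this isomorphism makes it automatically well defined on cohomology. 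For $c\in\Gamma(\cH^k)^\cG$ and $c'\in\Gamma(\cH^{n-k})^\cG$ I would set
\[ \langle c,c'\rangle_\cG(x):=\int_{s^{-1}(x)}c(x)\wedge c'(x), \]
using the coherent orientation of the source fibers induced by the orientation of $M$. Smooth fiberwise integration of top classes depends smoothly on $x$, so $\langle c,c'\rangle_\cG\in C^\infty(M)$, and the construction is manifestly bilinear.

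First I would verify that this function is constant along the orbits of $\cG$, so that it lands in the Casimir subalgebra $C^\infty(M/\cF)$. An arrow $h$ from $s(h)$ to $t(h)$ acts as an orientation-preserving diffeomorphism between the source fibers $s^{-1}(s(h))$ and $s^{-1}(t(h))$, inducing $h_*\colon\cH^\bullet_{s(h)}\to\cH^\bullet_{t(h)}$; it carries $c(s(h))$ to $c(t(h))$ and $c'(s(h))$ to $c'(t(h))$ by invariance. As integration of top forms is preserved by orientation-preserving diffeomorphisms, $\langle c,c'\rangle_\cG(t(h))=\langle c,c'\rangle_\cG(s(h))$; hence $\langle c,c'\rangle_\cG$ is constant on orbits and defines the asserted map into $C^\infty(M/\cF)$. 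This step, together with the smoothness claim, is routine.

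The hard part will be non-degeneracy, precisely because passing to $\cG$-invariant sections could a priori destroy the fiberwise perfectness of Poincar\'e duality. I would prove the pointwise statement: if $c\in\Gamma(\cH^k)^\cG$ has $c(x_0)\neq0$, then some $c'$ has $\langle c,c'\rangle_\cG(x_0)\neq0$. By fiberwise Poincar\'e duality on $s^{-1}(x_0)$ choose $\eta\in\cH^{n-k}_{x_0}$ with $\int_{s^{-1}(x_0)}c(x_0)\wedge\eta>0$, extend $\eta$ to a local section $\tilde\eta$ of $\cH^{n-k}$, and set $\alpha=\chi\,\tilde\eta$ for a bump function $\chi\geq0$ with $\chi(x_0)=1$, shrinking the support of $\chi$ so that $\langle c,\tilde\eta\rangle>0$ on it. Averaging $\alpha$ with the normalized left-invariant Haar measure $\d\nu$ of the $s$-proper groupoid $\cG$—exactly the averaging operator of Proposition \ref{thm:lalgcohom}—yields an invariant section $c':=\int_t\alpha\,\d\nu\in\Gamma(\cH^{n-k})^\cG$. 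Using equivariance of the Poincar\'e pairing together with the invariance $h_*c(s(h))=c(x_0)$ one computes
\[ \langle c,c'\rangle_\cG(x_0)=\int_{t^{-1}(x_0)}\langle c(s(h)),\alpha(s(h))\rangle\,\d\nu(h). \]
The integrand is nonnegative and is strictly positive on the nonempty open set $\{h\in t^{-1}(x_0):\chi(s(h))>0\}$, which contains the unit $1_{x_0}$; integrating against the full-support probability measure $\d\nu$ gives $\langle c,c'\rangle_\cG(x_0)>0$. Thus the pairing has trivial kernel on each side, which is the required non-degeneracy. The only delicate bookkeeping is to fix the $\cG$-action on $\cH^\bullet$ and the coherent orientations equivariantly; once this is done, the displayed identity and the positivity estimate are immediate.
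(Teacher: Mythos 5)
Your proposal follows essentially the same route as the paper: transfer the pairing to $\cG$-invariant sections of $\cH^\bullet$ via Proposition \ref{thm:lalgcohom}, use orientation-preservation of the right translations to land in Casimirs, and prove non-degeneracy by pairing an invariant section against the Haar average of a fiberwise Poincar\'e dual. The only (harmless) difference is in the last step: you localize the dual class with a nonnegative bump function and argue positivity of the averaged integrand near the unit, whereas the paper arranges the pointwise pairing to be identically $1$ along the orbit so that the average is exactly $1$; both arguments are valid given the standard full-support Haar system on a proper groupoid.
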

\begin{proof}
For oriented manifolds the Poincar\'e duality pairing is preserved by isomorphisms in cohomology induced by orientation preserving
diffeomorphisms. The right action by an arrow $g\in \cG$ preserves the orientation of the s-fibers because units clearly have this property and $\cG$ is connected. Hence, the pairing in $\Gamma(\cH^\bullet)$ restricts to a pairing in $\Gamma(\cH^\bullet)^\cG$. This pairing on $\Gamma(\cH^\bullet)^\cG$  is transferred via the isomorphism 
\eqref{cohom-isom-mult} to a  pairing on $H^\bullet_\pi(M)$ with values in $C^\infty(M/\cF)\equiv \Gamma(\cH^0)^\cG$.

It remains to check that the pairing defined in $\Gamma(\cH^\bullet)^\cG$ is non-degenerate. Let $c\in \Gamma(\cH^k)^\cG$ be
such that $c(x)\neq 0$. Then we can always find $b\in \Gamma(\cH^{\mathrm{top}-k})$ such that at $x$ their classical Poincar\'e duality pairing satisfies
\[\langle c(x),b(x)\rangle=1.\]
We will produce $\widetilde{b}\in \Gamma(\cH^{\mathrm{top}-k})^\G$ such that:
\[\langle c(x),\widetilde{b}(x)\rangle=1,\]
which shows that  the pairing defined in $\Gamma(\cH^\bullet)^\cG$ is non-degenerate. 

In the proof of Proposition \ref{thm:lalgcohom} we saw that the sections $c$ and $b$ can be represented by fiberwise closed forms $\a$ and $\beta$,
where $\a$ is $\cG$-invariant. We use again averaging to produce $\int_t\beta~\d\nu\in \Omega_s(\cG)^\G$, whose image by (\ref{cohom-isom-Ginv})
we denote by $\widetilde{b} \in \Gamma(\cH^k)^\cG$. We claim that $\langle c(x),\widetilde{b}(x)\rangle=1$. Indeed,
\begin{align*}
\langle c(x),\widetilde{b}(x)\rangle&=\langle \a_{s^{-1}(x)},\left(\int_t\beta ~\d\nu\right)_{s^{-1}(x)}\rangle\\
&=\langle\left(\int_t\a ~\d\nu\right)_{s^{-1}(x)},\left(\int_t\beta ~\d\nu\right)_{s^{-1}(x)}\rangle,
\end{align*}
and since averaging commutes with the classical Poincar\'e duality pairing on any s-fiber,  we obtain
\[\langle c(x),\widetilde{b}(x)\rangle= \int_t \langle \alpha_{s^{-1}(x)},\beta_{s^{-1}(x)}\rangle ~\d\nu=
\int_t \langle c(x),b(x)\rangle ~\d\nu=1.\]
 \end{proof}

Clearly, the previous results which were stated for Poisson manifolds, are really about the Lie algebroid 
cohomology $H^\bullet(A)$ of s-proper Lie algebroids: if $\cG$ is an s-proper integration of  $A\rightarrow M$, 
then there is a ring isomorphism
\[H^\bullet(A)\cong \Gamma(\cH^\bullet)^\cG.\]
Likewise, if $A$ is orientable, so the (connected) $s$-fibers of $\cG$ can be coherently oriented, then 
$\cG$ defines a non-degenerate Poincar\'e duality pairing on $H^\bullet(A)$ with values in $C^\infty(M/\G)$.

\begin{remark} The Poincar\'e duality pairing for s-proper Lie groupoids takes values in Casimirs. 
A Hamiltonian invariant volume form on a compact Poisson manifold $(M,\pi)$ determines a real valued pairing:
\[H^\bullet_\pi(M)\times H^{\mathrm{top}-\bullet}_\pi(M) \to \R,\quad (P,Q)\mapsto \int_M (i_{P\wedge Q}\mu^{-1})~\mu.\]
In general, it is not known whether these pairings are non-degenerate (see \cite{ELW} for details). In \cite{CFMc} we 
will show that on Poisson manifolds of compact type the real valued pairings defined by invariant density measures are non-degenerate, and we shall
describe their relation with the Casimir-valued Poincar\'e duality pairing(s) above.
\end{remark}

\subsection{Hodge decomposition}
We now describe a Hodge-type decomposition for the complex $(\X^\bullet(M),\d_\pi)$ of any $s$-proper Poisson manifold. 

Let us first recall the standard Hodge decomposition: given an oriented Riemannian manifold $N$ one has the associated Laplace-Beltrami operator:
\[ \Delta= \d\delta + \delta \d: \Omega^{\bullet}(N)\to \Omega^{\bullet}(N), \]
where $\delta= *\d*: \Omega^{\bullet}(N)\to \Omega^{\bullet -1}(N)$ and $*: \Omega^{\bullet}(N)\to \Omega^{\top-\bullet}(N)$ is the 
Hodge-$\ast$ operator. When $N$ is compact one obtains a direct sum decomposition
\[ \Omega^{\bullet}(N)= \Ker{\Delta}\oplus \Im(\d)\oplus \Im(\delta) .\]
The resulting projection onto $\Ker{\Delta}$ is usually denoted by $\cH$. One often uses the Green operator $G$,
which gives the unique solution of $\Delta\omega=\alpha$ in
$(\Ker\Delta)^\perp$, to write
\[ \alpha= \cH(\alpha)+ \d \delta G(\alpha)+ \delta \d G(\alpha),\quad \forall\ \alpha\in \Omega^{\bullet}(N).\]

We extend this decomposition to any Lie algebroid $A\to M$ as follows. First, a choice of metric on the vector bundle $A$,
giving rise to the $A$-Laplacian:
\[ \Delta_{A}= \d_A\delta_A+ \delta_A\d_A: \Omega^{\bullet}(A)\to \Omega^{\bullet}(A).\]
As a straightforward generalization of the standard Hodge decomposition, one obtains a similar 
decomposition for $\Omega^{\bullet}(A)$ when $M$ is compact and the $A$-de Rham complex is elliptic. This is too restrictive
since ellipticity holds only when $A$ is transitive. However, there is yet another generalization, which holds under 
the compactness conditions that are of interest to us. For simplicity, we shall state it just for the cotangent Lie algebroid
of a Poisson manifold:

\begin{theorem} Let $(\cG,\Omega)$ be an s-connected, s-proper symplectic integration of an oriented Poisson manifold $(M,\pi)$. Then, upon a choice of metric on $M$, one has:
\begin{enumerate}[(i)]
\item a direct sum decomposition:
\[ \X^{\bullet}(M)= \Ker{\Delta_{T^*M}}\oplus \Im(\d_{T^*M})\oplus \Im(\delta_{T^*M});\]
\item an isomorphism between Harmonic multivector fields and Poisson cohomology:
\[\cH(\X^{\bullet}(M))\cong H^\bullet_\pi(M);\]
\item Hodge-$\ast$ isomorphisms:
 \[H^{\bullet}_\pi(M) \cong H_\pi^{\mathrm{top}-\bullet}(M).\]
\end{enumerate}
\end{theorem}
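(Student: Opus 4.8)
The plan is to transport the whole statement to the fiberwise de Rham complex of $s\colon\cG\to M$, where classical Hodge theory on the compact oriented $s$-fibers is available, and then to descend everything to $\cG$-invariant forms using the isomorphisms established in the proof of Proposition \ref{thm:lalgcohom}. Recall from \eqref{cohom-isom-mult} that $(\X^\bullet(M),\d_\pi)\cong(\Omega_s^\bullet(\cG)^\cG,\d_s)$, the $\cG$-invariant part of the fiberwise de Rham complex $(\Omega_s^\bullet(\cG),\d_s)$. The first step is to turn the chosen metric on $M$ into geometry along the $s$-fibers: a metric on $M$ induces a metric on $A=T^*M=\mathrm{Lie}(\cG)$, and identifying the tangent spaces to the $s$-fibers with the fibers of $A$ by right translation (i.e. through right-invariant vector fields tangent to the $s$-fibers) extends it to a \emph{right-invariant} fiberwise metric on $\cG$, for which every right translation $R_g$ is an isometry between $s$-fibers and which restricts along the units to the given metric on $A$. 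Since the $s$-fibers are coherently orientable (as already used for the Poincar\'e duality pairing), we obtain a right-invariant fiberwise Hodge star $*_s$, a codifferential $\delta_s=*_s\d_s*_s$ (up to sign), and a fiberwise Laplacian $\Delta_s=\d_s\delta_s+\delta_s\d_s$ on $\Omega_s^\bullet(\cG)$.

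By construction these operators agree along the units with the operators $*_{T^*M}$, $\delta_{T^*M}$ and $\Delta_{T^*M}$ built from the metric on $A=T^*M$, and since a $\cG$-invariant fiberwise form is determined by its value along the units, the isomorphism \eqref{cohom-isom-mult} intertwines $*_s,\delta_s,\Delta_s$ with $*_{T^*M},\delta_{T^*M},\Delta_{T^*M}$ on invariant forms. Next I would invoke fiberwise Hodge theory: each $s$-fiber is a compact oriented Riemannian manifold, so one has $\Omega^\bullet(s^{-1}(x))=\Ker\Delta_s\oplus\Im\d_s\oplus\Im\delta_s$, and because $s$ is a locally trivial fibration whose cohomology bundle $\cH^\bullet\to M$ has locally constant rank, the harmonic projection and the Green operator depend smoothly on $x$; this produces a smooth decomposition $\Omega_s^\bullet(\cG)=\Ker\Delta_s\oplus\Im\d_s\oplus\Im\delta_s$ of the entire fiberwise complex. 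All three summands are $\cG$-invariant subspaces, since $*_s,\d_s,\delta_s,\Delta_s$ commute with the right $\cG$-action (the metric and orientation being right-invariant). Taking $\cG$-invariants is harmless: the averaging argument from the proof of Proposition \ref{thm:lalgcohom} gives an equivariant retraction onto invariants commuting with $\d_s$ and $\delta_s$, so an invariant $\d_s$-exact (resp. $\delta_s$-exact) form is $\d_s$ (resp. $\delta_s$) of an invariant form. Hence the decomposition restricts to
\[ \Omega_s^\bullet(\cG)^\cG=(\Ker\Delta_s)^\cG\oplus(\Im\d_s)^\cG\oplus(\Im\delta_s)^\cG, \]
and transporting through \eqref{cohom-isom-mult} yields exactly the decomposition (i).

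Part (ii) is then the usual consequence: the decomposition gives $\Ker\d_\pi=\cH(\X^\bullet(M))\oplus\Im\d_\pi$, so the harmonic projection induces the isomorphism $H^\bullet_\pi(M)=\Ker\d_\pi/\Im\d_\pi\cong\cH(\X^\bullet(M))$. For (iii), the fiberwise Hodge star $*_s$ commutes with the $\cG$-action and with $\Delta_s$, and sends degree $k$ to degree $(\dim M)-k=\top-k$ (the $s$-fibers having dimension $\rank(T^*M)=\dim M$); it therefore restricts to an isomorphism $\cH(\X^k(M))\cong\cH(\X^{\top-k}(M))$, which combined with (ii) gives the Hodge-$\ast$ isomorphisms $H^k_\pi(M)\cong H^{\top-k}_\pi(M)$.

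The step I expect to be the main obstacle is the middle one: verifying that the fiberwise Hodge data (harmonic projection, Green operator, and hence the three-term decomposition) is simultaneously \emph{smooth} in the base parameter $x$ and \emph{$\cG$-equivariant}, and that, under \eqref{cohom-isom-mult}, the resulting fiberwise operators restrict on invariants to precisely the $A$-level operators $\delta_{T^*M}$ and $\Delta_{T^*M}$ defined from the metric on $M$. Smoothness is standard elliptic theory with parameters once one knows $\cH^\bullet\to M$ is a genuine vector bundle, and equivariance follows from right-invariance of the metric; the one genuinely structural point to check carefully is the isometric, orientation-preserving nature of the right-translation identification of $s$-fiber tangent spaces with fibers of $A$, which is what makes $*_s$ correspond to $*_{T^*M}$ along the units and hence on all invariant forms.
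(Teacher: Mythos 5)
Your proposal follows essentially the same route as the paper's proof: right-translate the metric to the $s$-fibers, perform fiberwise Hodge theory with smooth dependence on the base point, and descend to the $\cG$-invariant part of $(\Omega_s^\bullet(\cG),\d_s)$, which is identified with $(\X^\bullet(M),\d_\pi)$. Your extra care on the invariance step (checking that an invariant exact form is the image of an invariant form, via averaging or the equivariant Green operator) is a point the paper passes over silently, but it is the same argument.
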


\begin{proof} Using right translations, the metric on $M$ induces Riemannian metrics on all the s-fibers of $\cG$ 
and we can apply the Hodge decomposition to all the s-fibers: on the fiberwise de Rham complex $(\Omega_{s}^{\bullet}(\cG),\d_s)$
we obtain the additional operators $\delta_s$ and $\Delta_s$. Since the standard Hodge decomposition and Green operator
have smooth dependence on the metric, we obtain a Hodge-type decomposition 
\[    \Omega_{s}^{\bullet}(\cG)= \Ker{\Delta_s}\oplus \Im(\d_s)\oplus \Im(\delta_s).\]
Moreover, since the standard Hodge decomposition is invariant under isometries, one obtains a similar decomposition of the invariant part 
\[    \Omega_{s}^{\bullet}(\cG)^\G\cong \X^{\bullet}(M),\]
and it is straightforward to check that the restrictions of $\d_s$, $\delta_s$ and $\Delta_s$ to this invariant part become
$\d_{T^*M}=\d_{\pi}$, $\delta_{T^*M}$ and $\Delta_{T^*M}$, respectively. This proves the decomposition (i).

Just like in the manifold case, the cohomology $H(\Omega_{s}^{\bullet}(\cG)^\cG,\d_s)$ is isomorphic to the subspace of Harmonic sections
\[\cH(\Omega_{s}^{\bullet}(\cG)^\cG),\]
and this isomorphism is transferred to an isomorphism between harmonic multivector fields and Poisson cohomology, showing that (ii) holds.

Similarly, the Hodge-$\ast$ operator 
\[\cH^\bullet(\Omega_{s}(\cG)^\cG)\to \cH^{\mathrm{top}-\bullet}(\Omega_{s}(\cG)^\cG)\]
is an isomorphism, since it squares to $\pm\mathrm{Id}$.
\end{proof}

%We illustrate the Hodge decomposition in the case of linear Poisson structures:

%{\color{red} M: I do not understand David's red comment below. Even more (and much more worrying), I do no understand the point of this Example (what are we saying here?). Why not just delete it?}

\begin{example} 
Let $M=\mathfrak{g}^*$ where $\gg$ is a compact Lie algebra. It is proved in \cite{GiWe} that:
\[H_{\pi}^\bullet(\gg^*)\cong H^\bullet(\gg)\otimes \text{Cas}(\gg^*).\]
We can interpret this result in terms of the Hodge decomposition as follows. 

Fix a compact Lie group $G$ integrating
$\mathfrak{g}$ and an inner product on $\mathfrak{g}^*$. Then the harmonic multivector fields in $\gg^*$ can be described as follows: 
we let $\cH_{CE}$ be the vector space of harmonic representatives of the Chevalley-Eilenberg complex of $\mathfrak{g}$
with respect to the given inner product. They can be viewed as constant multivector fields in $\gg^*$. 
Note that these are the harmonic representatives of the cohomology of $G$ with respect to the right invariant metric 
defined by the inner product.
 
Now let $h_1,\dots, h_k$ be a basis of $\cH_{CE}$, which we regard as sections of the trivial bundle over $\mathfrak{g}^*$ with fiber $\cH_{CE}$. 
A harmonic multivector field $P$ is a $G$-invariant section of this trivial bundle. Since the $h_i$ are already $G$-invariant, we must have 
 \[P=\sum_{i=1}^k f_ih_i,\]
where the $f_i\in\text{Cas}(\gg^*)$.
\end{example}

We close this section stating a few consequences of the Hodge decomposition. First,
from Propositions \ref{prop:first:cohomology} and \ref{prop:h2:non-vanishing}, we deduce:

\begin{corollary} Let $(M,\pi)$ be an orientable Poisson manifold. Then: 
 \begin{enumerate}[(i)]
  \item if $(M,\pi)$ is s-proper, then $H^{\mathrm{top}-1}_\pi(M)=0$;
  \item if $(M,\pi)$ is of strong compact type, then $H^{\mathrm{top}-2}_\pi(M)\neq 0$.
 \end{enumerate}
\end{corollary}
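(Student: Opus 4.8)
The plan is to read both statements off the Hodge-$\ast$ duality $H^{\bullet}_\pi(M)\cong H^{\top-\bullet}_\pi(M)$ furnished by part (iii) of the preceding Hodge-decomposition theorem, thereby trading the inaccessible top-degree groups for the low-degree groups already controlled by Propositions \ref{prop:first:cohomology} and \ref{prop:h2:non-vanishing}. The one preliminary I would check in each case is that the hypotheses provide an s-connected, s-proper symplectic integration, since that (together with orientability, assumed throughout) is exactly what the Hodge theorem consumes.

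For (ii) this is immediate: by the implication diagram of Section \ref{sec:compactness}, strong compact type implies compact type and hence s-proper type, so the canonical integration $\Sigma(M,\pi)$ is s-connected and s-proper and the Hodge-$\ast$ isomorphism yields $H^{\top-2}_\pi(M)\cong H^2_\pi(M)$. Proposition \ref{prop:h2:non-vanishing} then exhibits $[\pi]$ as a nonzero class in $H^2_\pi(M)$, so the right-hand side is nonzero and (ii) follows.

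For (i) I would run the same mechanism with $\bullet=1$, getting $H^{\top-1}_\pi(M)\cong H^1_\pi(M)$ and then killing the right-hand side by Proposition \ref{prop:first:cohomology}. Here lies the main obstacle, which is one of matching hypotheses rather than of technique: the Hodge-$\ast$ isomorphism needs s-properness, whereas the vanishing $H^1_\pi(M)=0$ is established in Proposition \ref{prop:first:cohomology} from the source-$1$-connected integration $\Sigma(M,\pi)$ being proper, i.e.\ from \emph{strong} properness. Bare s-proper type does not deliver the second ingredient: a compact symplectic manifold is of s-proper type, yet for the symplectic torus $(\T^2,\omega)$ one computes $H^{\top-1}_\pi(\T^2)=H^1_\pi(\T^2)=H^1_{\mathrm{dR}}(\T^2)\neq 0$. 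I would therefore read the hypothesis of (i) as \emph{strong} s-proper type, which simultaneously grants s-properness (hence the duality) and strong properness (hence $H^1_\pi(M)=0$); with that reading $H^{\top-1}_\pi(M)\cong H^1_\pi(M)=0$ closes the argument.
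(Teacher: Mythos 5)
Your proof is exactly the paper's: the corollary is stated there with no separate argument beyond ``from Propositions \ref{prop:first:cohomology} and \ref{prop:h2:non-vanishing}, we deduce,'' i.e.\ one combines the Hodge-$\ast$ isomorphism $H^{\bullet}_\pi(M)\cong H^{\mathrm{top}-\bullet}_\pi(M)$ with the vanishing of $H^1_\pi$ and the non-vanishing of $[\pi]\in H^2_\pi$, precisely as you do. Your side remark about (i) is also well taken: as printed, ``s-proper'' is too weak a hypothesis, since Proposition \ref{prop:first:cohomology} needs strong properness, and the compact symplectic torus $(\T^2,\omega)$ is of s-proper type with $H^{\mathrm{top}-1}_\pi(\T^2)\cong H^1_{\mathrm{dR}}(\T^2)\neq 0$; reading the hypothesis as strong s-proper (which supplies both the duality and $H^1_\pi(M)=0$) is the correct repair.
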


For a second consequence, we need to recall Poisson homology (see (\cite{Br}). In this 
homology theory the chains are the differential forms and the boundary operator $\delta_\pi$ is given by
\[ \delta_\pi=i_{\pi}\d-\d i_{\pi}.\]
It was observed in \cite{ELW} that a Hamiltonian invariant volume form $\mu$ establishes an isomorphism of complexes
\[ (\X^{\bullet}(M),\d_\pi)\cong (\Omega^{\top-\bullet}(M),\delta_\pi), \quad P\mapsto i_P\mu_0,\] 
and so induces an isomorphism between Poisson cohomology and homology spaces $H^\bullet_\pi(M)\cong H_{\top-\bullet}^\pi(M)$. 
Combining this isomorphism with the Hodge-$\ast$ isomorphism 
we conclude that:

\begin{corollary}
If $(M,\pi)$ in an s-proper orientable Poisson manifold, then Poisson cohomology and homology coincide:
\[ H^\bullet_\pi(M)\cong H_\bullet^\pi(M). \]
In particular, we have:
\[ C^\infty(M/\cF)\cong C^\infty(M)/\{C^\infty(M),C^\infty(M)\}.\]
\end{corollary}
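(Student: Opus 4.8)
The plan is to obtain the corollary by composing two isomorphisms already available: the contraction duality of \cite{ELW} between the Poisson cochain and chain complexes, and the Hodge-$\ast$ isomorphism furnished by the preceding theorem. The only new ingredient required is the existence of a Hamiltonian invariant \emph{volume form}, and this is precisely where the orientability hypothesis is used a second time.

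First I would produce such a volume form. Choosing an s-connected, s-proper symplectic integration $(\cG,\Omega)$ of $(M,\pi)$, Proposition \ref{pro:fibvol} provides the Duistermaat--Heckman density $\rho_{\mathrm{DH}}^{\cG}$, which is Hamiltonian invariant. Since $M$ is oriented, this positive density corresponds to a genuine volume form $\mu_0\in\Omega^{\top}(M)$; as the invariance of a density and of the associated volume form are governed by the same divergence function, we get $\Lie_{X_h}\mu_0=0$ for every $h\in C^\infty(M)$. Thus $\mu_0$ is a Hamiltonian invariant volume form.

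Next I would invoke the two isomorphisms in turn. By \cite{ELW}, contraction $P\mapsto i_P\mu_0$ is an isomorphism of complexes $(\X^\bullet(M),\d_\pi)\cong(\Omega^{\top-\bullet}(M),\delta_\pi)$, inducing $H_{k}^\pi(M)\cong H^{\top-k}_\pi(M)$ in each degree. Independently, part (iii) of the preceding theorem (which also needs orientability and s-properness, plus a choice of metric on $M$) yields the Hodge-$\ast$ isomorphism $H^{\top-k}_\pi(M)\cong H^k_\pi(M)$. Composing,
\[ H_k^\pi(M)\;\cong\;H^{\top-k}_\pi(M)\;\cong\;H^k_\pi(M), \]
which is the claimed degree-preserving isomorphism $H_\bullet^\pi(M)\cong H^\bullet_\pi(M)$.

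Finally I would extract the ``in particular'' statement as the degree-zero case. On the cohomology side, $H^0_\pi(M)=\{f\in C^\infty(M):\d_\pi f=0\}$ is the space of Casimirs, which under the identification $C^\infty(M/\cF)\equiv\Gamma(\cH^0)^{\cG}$ used above is exactly $C^\infty(M/\cF)$. On the homology side, the boundary $\delta_\pi=i_\pi\d-\d i_\pi$ sends a $1$-form $g\,\d f$ to a sign times $\{f,g\}$, so its image in $\Omega^0(M)=C^\infty(M)$ is $\{C^\infty(M),C^\infty(M)\}$ and hence $H_0^\pi(M)=C^\infty(M)/\{C^\infty(M),C^\infty(M)\}$. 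The isomorphism $H^0_\pi(M)\cong H_0^\pi(M)$ then gives the stated identification. I expect the only step needing genuine care to be the passage from the invariant density to an invariant volume form; the remainder is degree bookkeeping between two established isomorphisms.
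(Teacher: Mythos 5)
Your proposal is correct and follows essentially the same route as the paper: compose the contraction isomorphism of \cite{ELW} (using the Hamiltonian invariant volume form obtained from the Duistermaat--Heckman density via orientability) with the Hodge-$\ast$ isomorphism, then read off degree zero for the ``in particular'' statement. The paper merely records the degree-zero identification $H_0^\pi(M)=C^\infty(M)/\{C^\infty(M),C^\infty(M)\}$ as its one-line proof, leaving the composition of the two isomorphisms to the preceding discussion, so your added care about passing from the invariant density to an invariant volume form is a harmless elaboration rather than a different argument.
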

\begin{proof}
 The right hand side of the second equality is, by definition, $H_0^\pi(M)$.
\end{proof}

\section{Normal linear form around leaves}
\label{sec:Moser}
%%%%%%%%%%%%%%%%%%%%%%%
%%%%%%%%%%%%%%%%%%%%%%%
%%%%%%%%%%%%%%%%%%%%%%%
%%%%%%%%%%%%%%%%%%%%%%%
%%%%%%%%%%%%%%%%%%%%%%%
%%%%%%%%%%%%%%%%%%%%%%%
%%%%%%%%%%%%%%%%%%%%%%%
%%%%%%%%%%%%%%%%%%%%%%%
%%%%%%%%%%%%%%%%%%%%%%%

In symplectic geometry, Moser-Weinstein type techniques provide neighborhood theorems and normal forms for symplectic manifolds 
around submanifolds of special type, such as Lagrangian, isotropic, coisotropic or symplectic submanifolds. Such theorems cannot hold 
for general Poisson manifolds, because  normal forms depend on the first order jets of the structures (symplectic/Poisson) along the submanifold.
Even at fixed points, Poisson structures may have trivial first jet but non-trivial higher order jets (e.g., quadratic Poisson structures).
We will show now that for PMCTs tubular neighborhoods and normal forms around leaves do exist.

\subsection{Neighborhoods of symplectic leaves}

A symplectic manifold can be a symplectic leaf of distinct Poisson structures, even of different dimensions.
On the other hand, if $(M,\pi)$ is a Poisson manifold and $S$ a symplectic leaf with symplectic form $\omega_S$, then we have the restricted Lie
algebroid $A_S:=T^*_SM$, which is a transitive Lie algebroid over the leaf. One then may ask the following fundamental questions:
\begin{itemize}
\item {\bf Realization Problem}: Given a transitive Lie algebroid $A_S\to S$ over a symplectic manifold $(S,\omega_S)$, is there a Poisson
manifold $(M,\pi)$ such that $(S,\omega_S)$ is a symplectic leaf of $M$ and $A_S\cong T^*_SM$? If this is the case, then we say that $A_S\to (S,\omega_S)$ is {\bf realizable in the Poisson manifold} $(M,\pi)$;
\item {\bf Neighborhood Equivalence}: Given a transitive algebroid $A_S\to S$ over a symplectic manifold $(S,\omega_S)$ 
which is realizable in two Poisson manifolds $(M_1,\pi_1)$ and $(M_2,\pi_2)$, is there a Poisson diffeomorphism $\phi:U_1\to U_2$
defined in some neighborhoods $S\subset U_1\subset M_1$ and $S\subset U_2\subset M_2$, which restricts to a symplectomorphisms in $S$? 
If this is the case, then we say that $(M_1,\pi_1)$ and $(M_2,\pi_2)$ are {\bf neighborhood equivalent} around $A_S$;
\item {\bf Local Rigidity:} Given a transitive Lie algebroid $A_S\to S$ over a symplectic manifold $(S,\omega_S)$ which is realizable, 
are any two realizations of $A_S$ neighborhood equivalent? In this case we say that $A_S\to (S,\omega_S)$ is {\bf locally rigid}.
\end{itemize}
 
The local linear model discussed in Section \ref{sec:local:model} gives a simple solution to the realization problem whenever $A_S$ is integrable:
 
 \begin{proposition}
 \label{prop:realization:problem}
If $A_S\to (S,\omega_S)$ is an integrable transitive Lie algebroid over a symplectic manifold, then it is realizable in a Poisson manifold $(M,\pi)$. Moreover:
\begin{enumerate}[(i)]
\item if $A_S$ has an s-connected, proper (respectively, source 1-connected, proper) integration, then $(M,\pi)$ can be taken to be of proper
(respectively, strong proper) type;
\item if $A_S$ has an s-connected, s-proper (respectively, source 1-connected, s-proper) integration, then $(M,\pi)$ can be taken
to be of s-proper (respectively, strong s-proper) type.
\end{enumerate}
 \end{proposition}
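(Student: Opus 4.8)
The plan is to realize $A_S$ through the local linear model of Section~\ref{sec:local:model}, by translating the groupoid data of an integration into the principal-bundle data that construction requires. First I would invoke integrability to pick a source-connected integration $\cG\tto S$ of $A_S$. Fixing a base point $x_0\in S$ and setting $G:=\cG_{x_0}$ and $P:=s^{-1}(x_0)$, the target map makes $P$ into a \emph{connected} principal $G$-bundle over $S$ (connectedness because $\cG$ is source-connected), and the standard identification of the Lie algebroid of a transitive groupoid with the Atiyah (gauge) algebroid of this bundle yields $A_S\cong TP/G$.

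Next I would feed the data $(S,\omega_S,P\to S,\theta)$, for an arbitrary principal connection $\theta$, into the local linear construction, producing the reduced Dirac space $(P\times_G\gg^*,L^\theta_\lin)$ in which $S=P\times_G\{0\}$ is a symplectic leaf carrying $\omega_S$. Restricting to the Poisson support, which contains a neighborhood of $S$, gives a Poisson manifold $(M,\pi)$ having $S$ as a symplectic leaf. The realizability claim then reduces to identifying the restricted cotangent algebroid $T^*_SM$ with $A_S$: the isotropy bundle of the transverse linear Poisson structure on $\gg^*$ is the adjoint bundle $P\times_G\gg$, and the coupling encoded by $\theta$ assembles $T^*_SM$ into precisely the Atiyah algebroid $TP/G\cong A_S$. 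I expect this identification --- matching not merely the Atiyah extension of vector bundles but the full Lie bracket --- to be the main obstacle, and the one point demanding genuine verification; it is also where independence of the answer from $\theta$ (gauge equivalence, cf.\ the Remark after Corollary~\ref{cor:local:model}) enters. Note that this step needs no compactness of $G$: the diagonal $G$-action on $P\times\gg^*$ is free and proper, since principal-bundle actions always are, so the reduction is legitimate for \emph{any} integration.

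For the refined statements I would simply use the integration supplied by the hypothesis. If $A_S$ admits an s-connected proper (resp.\ s-proper) integration $\cG$, then its isotropy group $G=\cG_{x_0}$ is compact by Lemma~\ref{lemma-gen-pr-gpds}, so the data above is exactly that of the local linear model with compact structure group, and Corollary~\ref{cor:local:model} applies verbatim: the model is always of proper type, and of s-proper type once $P=s^{-1}(x_0)$ is compact, which holds when $\cG$ is s-proper (compact s-fibers). When the integration is moreover source $1$-connected, its s-fibers, and hence $P$, are $1$-connected, upgrading the two conclusions to strong proper and strong s-proper respectively. Finally, since these conclusions concern the Dirac structure $L^\theta_\lin$ on $P\times_G\gg^*$, I would transfer them to the Poisson manifold $(M,\pi)$ --- its Poisson support --- using the proposition relating the $\cC$-type of a Dirac manifold to that of its Poisson support, which preserves the proper and s-proper types together with their strong versions.
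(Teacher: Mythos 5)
Your proposal follows essentially the same route as the paper: take an s-connected integration $\cG_S\tto S$, realize it as the gauge groupoid of the principal $\cG_{x_0}$-bundle $P=s^{-1}(x_0)\to S$, build the local linear model $(P\times_G\gg^*, L^\theta_\lin)$ from a choice of connection, identify $A_S$ with $T^*_SM$ via the Atiyah algebroid, and deduce the compactness types from Corollary~\ref{cor:local:model} (with compactness of $G$ coming from properness of the integration, and $1$-connectedness of $P$ from source $1$-connectedness). The paper leaves the identification $A_S\cong T^*_SM$ and the passage to the Poisson support implicit (``one checks easily''), so your extra care on those two points is a refinement rather than a deviation.
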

 
 \begin{proof}
 Let $\G_S\tto S$ be an s-connected integration of $A_S\to S$. Since $\G_S$ is transitive, it is isomorphic to a gauge groupoid $P\times_{G_x} P\tto S$,
 where we can take $P$ to be the principal $G_x$-bundle $t: s^{-1}(x)\to S$, for any fixed $x\in S$. Now choose a principal bundle connection 
 $\theta:TP\to\gg$ in $P$ and build the local linear model $(M,\pi^\theta_{\lin})$. One checks easily that $(S,\omega_S)$ is a
 symplectic leaf of $(M,\pi^\theta_\lin)$ and that $A_S\cong T^*_SM$. Items (i) and (ii) follow from Corollary \ref{cor:local:model}.
 \end{proof}
 
\subsection{Neighborhood equivalence}
We now consider the neighborhood equivalence problem. The proof of the Proposition \ref{prop:realization:problem} shows that the
Poisson manifold $(M,\pi)$ where we realize $A_S\to (S,\omega_S)$ depends crucially on the choice of integration. Namely, we have
$S\subset M\subset P\times_G\gg^*$, where $P$ is the principal $G$-bundle defined by the choice of integration
$\G_S\tto S$ (cf.~Section \ref{sec:local:model}). Hence, we will fix the integration $\G_S\tto S$ and ask how unique
are realizations $(M,\pi)$ with symplectic integration $\G\tto M$ such that $\G|_S=\G_S$.

Let us start by remarking that any integration $\G_S\tto S$ of $A_S\to (S,\omega_S)$ comes with a multiplicative closed 2-form, namely:
\[ \Omega_S:=t^*\omega_S-s^*\omega_S. \]
It is easy to check that $\Omega_S$ has constant rank equal to $2\dim S$, and that the target map is a forward Dirac map: $t_*L_{\Omega_S}=L_{\omega_S}$.

We can then reformulate the neighborhood equivalence question as the following problem at the groupoid level: given two embeddings 
\[ j_i: (\G_{S}, \Omega_{S}) \hookrightarrow (\G_i, \Omega_i),\quad (i=0,1),\]
where $j_i(S)$ is an orbit of $\G_i$, is there a symplectic groupoid neighborhood equivalence between $(\G_0,\Omega_0)$ and $(\G_1,\Omega_1)$?

\begin{theorem}\label{Mo-We-type-thm} Let $(\G_{S},\Omega_S)\tto (S,\omega_S)$ be an integration of $A_S\to (S,\omega_S)$. Assume that we have two embeddings 
\[ j_{i}: (\G_{S}, \Omega_{S}) \hookrightarrow (\G_{i}, \Omega_{i}), \quad (i=0,1)\]
into proper symplectic groupoids. Then there exist neighborhoods $U_i$ of $S$ in $M_i$ and an isomorphism of symplectic groupoids 
\[ \Phi: (\G_{0}|_{U_0}, \Omega_0) \overset{\cong}{\longrightarrow} (\G_{1}|_{U_1}, \Omega_1)\]
which is the identity on $\G_{S}$ (i.e., $\Phi\circ j_0= j_1$).
\end{theorem}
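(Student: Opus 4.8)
The plan is to prove the statement by a Moser-type deformation argument carried out at the level of the symplectic groupoids. The argument naturally splits into two parts: a \emph{smooth} part, in which one identifies neighborhoods of $\G_S$ in $\G_0$ and $\G_1$ by a Lie groupoid isomorphism extending $\mathrm{id}_{\G_S}$ but ignoring the symplectic forms, and a \emph{symplectic} part, in which a multiplicative version of Moser's method deforms one multiplicative symplectic form into the other through groupoid automorphisms that fix $\G_S$.

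First I would produce the groupoid identification. Since each $\G_i$ is proper, Lemma \ref{lemma-gen-pr-gpds} guarantees that $j_i(S)$ is a closed embedded orbit, so a tubular neighborhood of $\G_S$ inside $\G_i$ is isomorphic, as a Lie groupoid, to the local linear model attached to the restricted groupoid $\G_S\tto S$ together with the normal representation of $\G_S$ on $\nu_i=T_SM_i/TS$ (this is the linearization theorem for proper Lie groupoids around an orbit). The crucial observation is that this model depends only on data along $S$: from the anchor sequence \eqref{eq:liealgebrasbundlealgebroid} one has $\nu_i^*\cong\gg_S$, so both normal representations are the action determined by $A_S$ and $\G_S$ alone. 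Hence the two linear models coincide, and composing the two linearizations yields a Lie groupoid isomorphism $\psi\colon\G_0|_{U_0}\to\G_1|_{U_1}$ equal to the identity on $\G_S$. Replacing $\Omega_1$ by $\psi^*\Omega_1$, we are reduced to a single proper Lie groupoid $\G$ carrying two multiplicative symplectic forms $\Omega_0,\Omega_1$ whose pullbacks to $\G_S$ both equal $\Omega_S=t^*\omega_S-s^*\omega_S$. Before running Moser one must also arrange that $\Omega_0$ and $\Omega_1$ agree to first order \emph{as ambient forms} along $\G_S$, not merely tangentially; this is a linear-algebraic adjustment of $\psi$, using that the symplectic data in the normal directions is the canonical pairing between $\nu_S\cong\gg_S^*$ and $\gg_S$ and is therefore again fixed by $A_S$.

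The symplectic part is then a Moser argument. I would take the affine path $\Omega_\tau=(1-\tau)\Omega_0+\tau\Omega_1$, $\tau\in[0,1]$; each $\Omega_\tau$ is closed and multiplicative, and is non-degenerate on a neighborhood of $\G_S$ because $\Omega_0$ and $\Omega_1$ coincide there and non-degeneracy is open. The heart of the matter is to write the closed multiplicative $2$-form $\dot\Omega_\tau=\Omega_1-\Omega_0$, which vanishes along $\G_S$, as $\Omega_1-\Omega_0=\d\beta$ for a \emph{multiplicative} $1$-form $\beta$ vanishing along $\G_S$. A relative Poincar\'e lemma on the tubular neighborhood of $\G_S$ first gives an ordinary primitive vanishing along $\G_S$; its failure to be multiplicative is a cocycle on the composable arrows $\G^{(2)}$, which one kills using the vanishing of the differentiable cohomology of the proper groupoid $\G$ in positive degrees (property (b) recalled before Proposition \ref{prop:first:cohomology}), implemented by averaging against a Haar system with a cutoff function, exactly in the spirit of the averaging in the proof of Proposition \ref{thm:lalgcohom}. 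Given such a $\beta$, I solve $i_{X_\tau}\Omega_\tau=-\beta$ for a time-dependent multiplicative vector field $X_\tau$, which vanishes along $\G_S$; its flow $\Phi_\tau$ is by groupoid automorphisms fixing $\G_S$ and satisfies $\Phi_\tau^*\Omega_\tau=\Omega_0$, so $\Phi:=\psi\circ\Phi_1$ is the desired isomorphism.

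The main obstacle is precisely this cohomological step — upgrading an ordinary primitive of $\Omega_1-\Omega_0$ to a multiplicative one that still vanishes along $\G_S$ — since it is here, and only here, that properness of $\G$ is genuinely used, and one must check that the averaging procedure preserves both exactness and the vanishing along the orbit. A secondary technical point, relevant because a merely proper (rather than $s$-proper) groupoid may have a non-compact orbit $S$, is the completeness of the Moser flow up to time $1$: since $X_\tau$ vanishes along $\G_S$ this is resolved by shrinking the neighborhoods $U_i$, a routine but necessary refinement.
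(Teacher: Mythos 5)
Your two-step strategy (linearize the groupoid around the orbit, then run a multiplicative Moser deformation) is exactly the paper's, and your overall argument is correct; the genuine divergence is in the key lemma producing a multiplicative primitive $\beta$ of $\Omega_1-\Omega_0$ vanishing along $\G_S$. You obtain it by first taking an ordinary relative primitive and then correcting its multiplicativity defect by Haar-system averaging, invoking the acyclicity of proper groupoids. The paper instead exploits the fact that after linearization the ambient groupoid is a VB groupoid, so scalar multiplication gives a homotopy $h_t$ onto $\G_S$ \emph{through groupoid homomorphisms}; plugging this $h_t$ into the standard homotopy formula yields an explicit primitive $\alpha(X_g)=\int_0^1\Omega(\tfrac{\d}{\d t}h_t(g),\d_gh_t(X_g))\,\d t$ which is \emph{automatically} multiplicative, with no averaging and no properness needed at that step. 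The paper's route is more elementary and self-contained; yours is more robust in principle (it would apply to homotopies not consisting of groupoid maps) but requires extra verifications you only gesture at: that the correction term can be chosen $\d$-closed (so that $\d\beta=\Omega_1-\Omega_0$ is preserved, which forces you into the Bott--Shulman double complex rather than the differentiable cohomology with coefficients in a representation that property (b) literally addresses), and that averaging preserves the vanishing along $\G_S$ (this does work, since $S$ is saturated, but it is not free). One small correction of emphasis: properness is \emph{not} where the paper uses it in the Moser half --- there it enters only to guarantee that the multiplicative vector field $X_t$ flows for time $1$ over a whole restricted groupoid $\G|_U$ (via projectability to the base); the primitive lemma holds without properness. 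Finally, your ``linear-algebraic adjustment'' making the two forms agree pointwise along $\G_S$ is precisely the content of the paper's Appendix (the choice of a multiplicative Lagrangian complement and the automorphism $\Phi$ of Lemma \ref{lem:A:2}), and is the most delicate part of the whole proof; it deserves more than a sentence, since producing \emph{multiplicative} complements requires the explicit description of $\nu(\G_S)$ as a quotient of a pair groupoid.
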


Before we turn to the proof, we would like to point out some important consequences: 

\begin{corollary}[Local Linear Normal Form]
\label{cor:local:normal:form}
Let $(M,\pi)$ be a proper Poisson manifold and $S$ a symplectic leaf through $x\in M$. If $\G\tto M$ is any s-connected, proper symplectic integration, then
there are neighborhoods $S\subset U \subset M$ and $S\subset V\subset s^{-1}(x)\times_{G_x}\gg^*_x$ such that $\pi|_U$ is Poisson diffeomorphic to the local linear model
${\pi^\theta_\lin}|_V$.
\end{corollary}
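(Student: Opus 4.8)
The plan is to reduce the statement to the Moser--Weinstein theorem (Theorem \ref{Mo-We-type-thm}) by exhibiting the local linear model as a second realization of the restricted Lie algebroid $A_S := T^*_S M$ along $S$. First I would set $P := s^{-1}(x)$ and let $G := \G_x$ be the isotropy group, with Lie algebra $\gg_x$; since $\G$ is proper, Lemma \ref{lemma-gen-pr-gpds}(iii) guarantees that $G$ is compact, and transitivity of $\G|_S$ over $S$ identifies it with the gauge groupoid $\G_S := \G|_S \cong P \times_{G} P \tto S$, which integrates $A_S$ (note $P$ is connected because $\G$ is s-connected). Restricting $\Omega$ gives the multiplicative $2$-form $\Omega_S = t^*\omega_S - s^*\omega_S$ on $\G_S$, so that the inclusion $j_0\colon (\G_S,\Omega_S)\hookrightarrow (\G,\Omega)$ is an embedding whose image is the orbit over $S$.

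Next I would choose a principal connection $\theta$ on $P$ and build the local linear model $(s^{-1}(x)\times_{G}\gg_x^*, \pi^\theta_\lin)$ of Section \ref{sec:local:model}, for which $S = P\times_G\{0\}$ is the symplectic leaf carrying $\omega_S$ and $A_S \cong T^*_S(P\times_G\gg_x^*)$ by Proposition \ref{prop:realization:problem}. Since $G$ is compact, Corollary \ref{cor:local:model} provides a proper symplectic integration $\G'$ of $\pi^\theta_\lin$; concretely, the reduction construction of Section \ref{sub-sub:Hamiltonian} applied to the Hamiltonian $G$-space $(P\times\gg_x^*,\omega^\theta_\lin)$ produces such a $\G'$. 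The key verification is that restricting $\G'$ to $S$ recovers exactly $(\G_S,\Omega_S)$: unwinding the reduction shows that the source fiber of $\G'$ over $x$ is again $P$ and that the reduced multiplicative form restricts to $t^*\omega_S-s^*\omega_S$ over the zero section. This yields a second embedding $j_1\colon (\G_S,\Omega_S)\hookrightarrow (\G',\Omega')$ onto the orbit over $S$.

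With the two embeddings $j_0,j_1$ into the proper symplectic groupoids $\G$ and $\G'$ in hand, Theorem \ref{Mo-We-type-thm} furnishes neighborhoods of $S$ in $M$ and in $s^{-1}(x)\times_{G}\gg_x^*$ together with a symplectic groupoid isomorphism $\Phi\colon (\G|_{U_0},\Omega)\to (\G'|_{U_1},\Omega')$ that is the identity on $\G_S$. Passing to units, $\Phi$ covers a diffeomorphism $\phi\colon U\to V$ of the bases which restricts to the identity on $S$; since $\Phi$ intertwines the target maps and these are Poisson, $\phi$ is a Poisson diffeomorphism from $(U,\pi)$ to $(V,\pi^\theta_\lin)$, giving the desired normal form. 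The genuine analytic content is entirely contained in Theorem \ref{Mo-We-type-thm}, which I am assuming, so the only real work is the bookkeeping of the previous paragraph; I expect the main obstacle to be precisely the verification that the proper integration $\G'$ of the local model restricts over $S$ to the same data $(\G_S,\Omega_S)$ as $\G$, i.e.\ matching both the groupoid structure and its multiplicative form along the zero section.
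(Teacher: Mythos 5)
Your proposal is correct and follows essentially the same route as the paper: the paper likewise exhibits the symplectic integration $(s^{-1}(x)\times s^{-1}(x)\times\gg^*)/G_x$ of the local linear model (with the basic form $(t\circ\mathrm{pr}_1)^*\omega-(t\circ\mathrm{pr}_2)^*\omega-\d\langle\mathrm{pr}_1^*\theta,\cdot\rangle+\d\langle\mathrm{pr}_2^*\theta,\cdot\rangle$), takes it as $(\G_0,\Omega_0)$ and $(\G,\Omega)$ as $(\G_1,\Omega_1)$ in Theorem \ref{Mo-We-type-thm}, and observes that the resulting groupoid neighborhood equivalence covers the desired Poisson diffeomorphism of the bases. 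The "key verification" you flag --- that both groupoids restrict over $S$ to the same $(\G_S,\Omega_S)$ --- is exactly the point the paper dispatches with "it is not hard to check."
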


\begin{proof}
Consider the local linear model $\pi^\theta_\lin$ associated to the principal $G_x$-bundle $t:s^{-1}(x)\to S$, for some choice of connection $\theta$. 
It is not hard to check that it
admits the symplectic integration:
\[ (s^{-1}(x)\times s^{-1}(x)\times \gg^*)/G_x\tto (s^{-1}(x)\times \gg^*)/G_x, \]
where the multiplicative symplectic form is induced by the basic form:
\[ \Omega^{\theta}_{\lin}= (t\circ\mathrm{pr}_1)^*\omega-(t\circ\mathrm{pr}_2)^*\omega-\d\langle \mathrm{pr}_1^*\theta,\cdot\rangle+\d\langle \mathrm{pr}_2^*\theta,\cdot\rangle\in \Omega^2(s^{-1}(x)\times s^{-1}(x)\times \gg^*). \]
To be more precise, one has to replace $\gg_x^*$ by some $G_x$-invariant neighborhood of the origin for this form to be symplectic.

Now, if in Theorem \ref{Mo-We-type-thm} we let $(\G_1,\Omega_1)=(\G,\Omega)$ and we let $(\G_0,\Omega_0)$ be this symplectic integration 
of the local linear model $\pi^\theta_\lin$, then the resulting groupoid neighborhood equivalence covers a Poisson 
diffeomorphism between $\pi|_U$ and ${\pi^\theta_\lin}|_V$, where $U$ is a neighborhood of $S$ in $M$ and $V$ is a neighborhood of $S$ in the local linear model.
\end{proof}

\begin{remark}
In general, the neighborhoods $U_0$ and $U_1$ in Theorem \ref{Mo-We-type-thm} (or $U$ and $V$ in Corollary \ref{cor:local:normal:form})
will not be saturated by orbits (i.e., symplectic leaves). However, in the s-proper case we can take them to be saturated, and so symplectic
leaves of s-proper Poisson manifolds have open, saturated neighborhoods where the Poisson structure looks like the local linear model. In \cite{CFMc},
we shall prove that even for Poisson manifolds of proper type there exist normal forms on saturated neighborhoods, but they will no longer be linear!
\end{remark}

Another important consequence of Theorem \ref{Mo-We-type-thm} is the following rigidity result:

\begin{corollary}[Local Rigidity]
\label{cor:local:rigidity}
Let $A_S\to (S,\omega_S)$ be a transitive Lie algebroid over a symplectic manifold which integrates to a compact, source 1-connected Lie groupoid.
Then $A_S$ is locally rigid inside integrable Poisson manifolds.
\end{corollary}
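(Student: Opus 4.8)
The plan is to deduce local rigidity directly from the Moser--Weinstein type Theorem \ref{Mo-We-type-thm}, the only real work being to manufacture, for each realization, a \emph{proper} symplectic groupoid that contains $(\G_S,\Omega_S)$ as its restriction over $S$. So let $(M_0,\pi_0)$ and $(M_1,\pi_1)$ be two realizations of $A_S\to(S,\omega_S)$ inside integrable Poisson manifolds, and form the canonical symplectic integrations $(\Sigma(M_i,\pi_i),\Omega_i)$. Since a symplectic leaf is a saturated Poisson submanifold, the restriction $\Sigma(M_i,\pi_i)|_S$ is canonically isomorphic to $\Sigma(A_S)$, which by hypothesis is the compact, source $1$-connected groupoid $\G_S$; moreover $\Omega_i$ restricts on it to the canonical multiplicative form $\Omega_S=t^*\omega_S-s^*\omega_S$. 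In particular $S$ itself is compact, being the base of the surjective submersion $t\colon\G_S\to S$ from the compact manifold $\G_S$.

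The key step will be to show that, precisely because $\G_S=\Sigma(M_i,\pi_i)|_S$ is compact, the groupoid $\Sigma(M_i,\pi_i)$ is proper on a neighbourhood of $S$. Writing $\G=\Sigma(M_i,\pi_i)$ and using that $S$ is an orbit, the set $s^{-1}(S)=\G_S$ is compact, so I would choose a relatively compact open $\mathcal{O}\supset\G_S$ in $\G$. Its boundary $\partial\mathcal{O}=\overline{\mathcal{O}}\setminus\mathcal{O}$ is compact and disjoint from $s^{-1}(S)$, hence $s(\partial\mathcal{O})$ is a compact set avoiding $S$; I then let $U_i$ be an open neighbourhood of $S$ disjoint from $s(\partial\mathcal{O})$ and small enough that the unit section over $U_i$ lies in $\mathcal{O}$. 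Since the $s$-fibres of $\G$ are connected, a connectedness argument forces each $s$-fibre over a point of $U_i$ to stay inside $\overline{\mathcal{O}}$: it contains its unit, which lies in $\mathcal{O}$, and if it met $\G\setminus\overline{\mathcal{O}}$ it would have to cross $\partial\mathcal{O}$, putting that point into $s(\partial\mathcal{O})$, a contradiction. Thus the fibres over $U_i$ are closed subsets of the compact set $\overline{\mathcal{O}}$, so $\G_i:=\Sigma(M_i,\pi_i)|_{U_i}$ has proper source map, hence is a proper symplectic groupoid, and it contains $(\G_S,\Omega_S)$ as its restriction over the orbit $S$. I expect this local properness near the compact leaf to be the main obstacle; everything else is formal.

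With the embeddings $j_i\colon(\G_S,\Omega_S)\hookrightarrow(\G_i,\Omega_i)$ in hand, Theorem \ref{Mo-We-type-thm} yields, after shrinking to neighbourhoods $V_i$ of $S$, an isomorphism of symplectic groupoids $\Phi\colon(\G_0|_{V_0},\Omega_0)\to(\G_1|_{V_1},\Omega_1)$ that is the identity on $\G_S$. Such an isomorphism covers a diffeomorphism $\phi\colon V_0\to V_1$ of the bases which is automatically Poisson, since the target maps are Poisson and intertwined by $\Phi$; and because $\Phi$ is the identity on $\G_S$, in particular on the units over $S$, the map $\phi$ restricts to $\mathrm{id}_S$, which is trivially a symplectomorphism of $(S,\omega_S)$. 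Hence the two realizations are neighbourhood equivalent around $A_S$, and since they were arbitrary, $A_S\to(S,\omega_S)$ is locally rigid inside integrable Poisson manifolds.
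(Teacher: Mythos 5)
Your proof is correct and is essentially the paper's own argument: the paper likewise identifies $\Sigma(M_i,\pi_i)|_S$ with the given compact, source $1$-connected integration of $A_S$, deduces properness of $\Sigma(M_i,\pi_i)$ near $S$ from the compactness of the $s$-fibres over $S$ (citing Ehresmann's theorem where you spell out the connectedness argument), and then invokes the groupoid Moser--Weinstein theorem — the paper routes this through Corollary \ref{cor:local:normal:form}, i.e.\ by comparing both realizations to the local linear model built from $\G_S$, whereas you apply Theorem \ref{Mo-We-type-thm} directly to the pair, which amounts to the same thing. One cosmetic point: the restricted groupoid $\Sigma(M_i,\pi_i)|_{U_i}$ need not have proper \emph{source} map, since its source fibres are $s^{-1}(u)\cap t^{-1}(U_i)$ rather than the full fibres $s^{-1}(u)$; but your argument does show that $s^{-1}(K)\subset\overline{\mathcal{O}}$ is compact for every compact $K\subset U_i$, hence that the anchor $(s,t)$ is proper, which is all that Theorem \ref{Mo-We-type-thm} requires.
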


\begin{proof}
If we realize $A_S$ into an integrable Poisson manifold $(M,\pi)$, then, by Ehresmann's theorem, the source 1-connected 
integration of $(M,\pi)$ is s-proper. Hence, by the previous corollary, any two such realizations are neighborhood equivalent.
\end{proof}

\subsection{Moser-Weinstein techniques for proper groupoids}

We now turn to the proof of the groupoid neighborhood equivalence (Theorem \ref{Mo-We-type-thm}). This will proceed in two steps: first,
we use a groupoid tubular neighborhood theorem which linearizes the groupoid. In the second step, we apply
a groupoid version of the classic Moser-Weinstein path method. In both steps, properness is a crucial hypothesis.

For a general Lie groupoid $\G\tto M$ and an orbit $S\subset \G$,
the normal bundle of $\G_{S}$ in $\G$ is not only a vector bundle over $\G_{S}$ but also a Lie groupoid over the normal bundle of $S$ in $M$:
\[ \xymatrix{ \nu(\G_S) \ar@<0.25pc>[r] \ar@<-0.25pc>[r]  \ar[d] & \nu(S) \ar[d] \\
\G_S \ar@<0.25pc>[r] \ar@<-0.25pc>[r] & S} \]
The structural maps of $\nu(\G_{S})$ are the ones induced by the differentials of the structural maps of $\G$.
The normal bundle $\nu(\G_S)\to \G_S$ is an example of a {\bf vector bundle groupoid} (VB groupoid, for short).

One has a tubular neighborhood theorem, known as the Zung-Weinstein linearization theorem \cite{We3, Zu}, which 
can be stated as follows (see also \cite{CS, HoFe} for more geometric proofs and further details):

\begin{theorem}
\label{thm:linearization:groupoids}
If $S$ is an orbit of a proper Lie groupoid $\G\tto M$, then there exist a neighborhood $U$ of $S$ in $M$, a neighborhood $V$ of $S$ in 
$\nu(S)$, and an isomorphism of Lie groupoids
\[ \G|_U \cong  \nu(\G_S)|_V,\]
which is the identity on $\G_S$. Furthermore, if $\G$ is $s$-proper, then one can choose $U$ to be saturated and
$V= \nu(S)$. 
\end{theorem}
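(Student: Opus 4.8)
The plan is to realize the desired isomorphism as the normal exponential map of a metric on $\G$ that is compatible with the groupoid structure, following the Riemannian approach of \cite{HoFe}. First I would record the VB groupoid structure. Since $\G$ is proper, the orbit $S$ is a closed embedded submanifold of $M$ (Lemma \ref{lemma-gen-pr-gpds}); hence $\G_S=\G|_S$ is embedded in $\G$, and the normal bundles $\nu(S)\to S$ and $\nu(\G_S)\to \G_S$ are well defined. Differentiating the structure maps $s,t,m,u,i$ of $\G$ along $\G_S$, and using that these maps preserve $\G_S$ and $S$, one obtains induced bundle maps which assemble into the Lie groupoid $\nu(\G_S)\tto \nu(S)$. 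The goal is then to produce a groupoid isomorphism from a neighborhood $V$ of the zero section of $\nu(\G_S)$ onto a neighborhood of $\G_S$ in $\G$, covering a tubular neighborhood $\nu(S)\supset V \xrightarrow{\cong} U\subset M$ and restricting to the identity on $\G_S$.

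The central step is to exploit properness to average an arbitrary metric into an invariant one. Properness furnishes a Haar system together with a normalized cutoff function, and averaging produces a metric $\eta$ on $\G$ making it a \emph{Riemannian groupoid}: the source map $s$ is a Riemannian submersion, inversion $i$ is an isometry, and for each arrow $g\colon x\to y$ the right translation $R_g\colon s^{-1}(y)\to s^{-1}(x)$ is an isometry. By construction $\eta$ descends to a metric $\check\eta$ on $M$ for which $s$ and $t$ are Riemannian submersions. The key consequence is that the normal bundle of $\G_S$ is preserved by the linearized structure maps, and the normal exponential maps of $S$ and of $\G_S$ intertwine $s,t,m$ with their linearizations.

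With such a metric in hand, I would define the linearization by the normal exponential map. On objects, $\exp^{\check\eta}\colon \nu(S)\supset V\to U\subset M$ is a tubular neighborhood, and on arrows $\exp^{\eta}$ restricts to a diffeomorphism $\Phi$ from a neighborhood of the zero section of $\nu(\G_S)$ onto a neighborhood of $\G_S$ in $\G$; properness is used again to shrink these neighborhoods uniformly so that $\Phi$ identifies $\nu(\G_S)|_V$ with $\G|_U$. That $\Phi$ is a groupoid morphism follows from the metric compatibilities: the identities $s\circ\exp^{\eta}=\exp^{\check\eta}\circ\,\d s$ and its analogue for $t$ hold because $s,t$ are Riemannian submersions, so they carry geodesics normal to the fibers to geodesics and commute with the exponentials, while compatibility with the multiplication $m$ follows from the fact that right translations are isometries, so $\exp^{\eta}$ intertwines $m$ with its linearization. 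Since the exponential fixes the zero section, $\Phi$ is the identity on $\G_S$, giving $\G|_U\cong \nu(\G_S)|_V$.

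Finally, for the s-proper refinement, compactness of the source fibers lets one take the invariant metric complete along the $s$-fibers, so that $\exp^{\eta}$ is defined on all of $\nu(\G_S)$ and restricts to a global fiberwise diffeomorphism over a saturated neighborhood $U$; one then obtains $V=\nu(S)$ with $U$ saturated. I expect the main obstacle to be the construction of the invariant metric $\eta$ satisfying the three compatibility conditions simultaneously (Riemannian submersion, isometric right translations, isometric inversion) and, more delicately, the verification that these conditions force the normal exponential to be multiplicative rather than merely a fiberwise diffeomorphism; this multiplicativity is the technical heart of the argument and is precisely where the geometry of proper groupoids is genuinely used.
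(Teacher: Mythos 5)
Your overall strategy---linearize via the normal exponential map of an averaged, invariant metric, in the spirit of \cite{HoFe}---is exactly the route the paper endorses (note that the paper does not reprove Theorem \ref{thm:linearization:groupoids}: it quotes it from \cite{We3,Zu}, with the Riemannian proof in \cite{CS,HoFe}, and only remarks that the linearization map is the exponential of a \emph{2-metric}). The genuine gap in your argument sits precisely at the point you yourself flag as the technical heart: multiplicativity of the exponential does \emph{not} follow from the three conditions you average for. Those conditions ($s$ a Riemannian submersion, right translations between $s$-fibers isometries, inversion an isometry) constrain a metric on the manifold of arrows only---a ``1-metric'' in the terminology of \cite{HoFe}. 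Such a metric does make $\exp_\eta$ compatible with $s$, $t$, $i$ and the units by the horizontal-geodesic argument you give, but it gives no control over the multiplication: $m$ is a map $\G^{(2)}\to\G$ defined on the space of composable \emph{pairs}, whereas right translations only compare individual $s$-fibers, and nothing in your hypotheses forces $\exp_\eta(\d m(v,w))=\exp_\eta(v)\cdot\exp_\eta(w)$. This is exactly why \cite{HoFe} introduce 2-metrics: a metric $\eta^{(2)}$ on $\G^{(2)}$ whose three face maps $\pr_1$, $m$, $\pr_2$ are all Riemannian submersions inducing one and the same metric on $\G$ (and hence on $M$). With that second layer in place, compatibility of $\exp$ with $m$ is obtained by applying to $m$ itself the very Riemannian-submersion argument you apply to $s$ and $t$; without it the claim is unsupported. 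The repair is to average not just a metric on $\G$ but the product metric on the fiber product $\G^{[2]}=\{(g_1,g_2):s(g_1)=s(g_2)\}$ over the free and proper right $\G$-action, pushing it down to $\G^{(2)}\cong\G^{[2]}/\G$; this averaging, using a Haar system with cutoff supplied by properness, is carried out in the paper's appendix in the proof of Lemma \ref{lemma:A:1}.

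The $s$-proper refinement is also too quick. Taking the metric complete along the $s$-fibers makes the exponential globally \emph{defined} on $\nu(\G_S)$, but global definition does not give injectivity or openness, so it does not by itself yield $V=\nu(S)$. The standard argument instead uses $s$-properness to get compact orbits and saturated invariant neighborhoods, and then enlarges an invariant neighborhood of the zero section to all of $\nu(\G_S)$ by a radial rescaling: fiberwise dilations (the flow of the Euler vector field) are groupoid automorphisms of the VB groupoid $\nu(\G_S)\tto\nu(S)$, and a rescaling by an invariant function identifies an invariant disc-like neighborhood with the whole normal bundle. The remainder of your skeleton---embeddedness of $S$ and $\G_S$ from properness, the VB groupoid structure on $\nu(\G_S)$, and the compatibility of $\exp$ with $s$, $t$, $i$---is sound and matches \cite{HoFe}.
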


In \cite{HoFe} the linearization map of Theorem \ref{thm:linearization:groupoids} is obtained as the exponential map 
\[ \exp_\eta: \nu(\G_S)|_V\to \G|_U \]
of a special kind of metric $\eta$, known as a 2-metric.  When $(\G,\Omega)$ is a symplectic groupoid,
we saw that the linear model is also a symplectic groupoid $(\nu(\G_S),\Omega^\theta_\lin)$,
where the symplectic form depends on a choice of a principal bundle connection $\theta$.
We show in the Appendix that in this case we have the following symplectic version of Theorem \ref{thm:linearization:groupoids}:

\begin{proposition}
\label{prop:linearization:sympl:groupoids}
Let $S$ be an orbit of a proper symplectic groupoid $(\G,\Omega)\tto M$. Then one can choose a 2-metric $\eta$ on $\G$, a principal  connection $\theta$ on $t:s^{-1}(x)\to S$, and a groupoid automorphism  $\Phi:\nu(\G_S)\to \nu(\G_S)$, such that the composition:
\[ \exp_\eta\circ\,\Phi: \nu(\G_S)|_V\to \G|_U, \]
is a groupoid isomorphism and satisfies:
\[ ((\exp_\eta\circ\,\Phi)^*\Omega)|_g=\Omega^\theta_\lin|_g,\quad \forall g\in \G_S. \]
\end{proposition}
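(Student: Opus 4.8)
The plan is to upgrade the Zung--Weinstein linearization (Theorem \ref{thm:linearization:groupoids}) to the symplectic setting by a careful choice of the auxiliary data. Starting from a $2$-metric $\eta$ on $\G$, the exponential map $\exp_\eta\colon\nu(\G_S)|_V\to\G|_U$ is a groupoid isomorphism which is the identity on $\G_S$; pulling back $\Omega$ along it produces a multiplicative symplectic form $\Omega':=\exp_\eta^*\Omega$ on the VB-groupoid $\nu(\G_S)$. Since $\exp_\eta$ is the identity on $\G_S$ and its differential along the zero section is the $\eta$-orthogonal identification $T\G\cong T\G_S\oplus\nu(\G_S)$, the restriction $\Omega'|_g$ at $g\in\G_S$ is exactly $\Omega|_g$ read through this identification. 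The problem therefore reduces to matching, \emph{along the zero section only}, the two multiplicative forms $\Omega'$ and $\Omega^\theta_\lin$ on $\nu(\G_S)$, which I can achieve by tuning $\eta$, the connection $\theta$, and a VB-groupoid automorphism $\Phi$; then $\exp_\eta\circ\Phi$ is the desired map, since $(\exp_\eta\circ\Phi)^*\Omega=\Phi^*\Omega'$.

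The key structural fact I would establish first is that $\G_S$ is \emph{coisotropic} in $(\G,\Omega)$. Indeed, $\dim\G=2\dim M=2\dim S+2\dim\gg_x$, while $\dim\G_S=2\dim S+\dim\gg_x$ and $\Omega_S=t^*\omega_S-s^*\omega_S$ has constant rank $2\dim S$; hence the kernel $K=\ker\d s\cap\ker\d t$ has rank $\dim\gg_x$, and since $K\subseteq(T\G_S)^\Omega$ while $\dim(T\G_S)^\Omega=\dim\gg_x$ as well, we get $(T\G_S)^\Omega=K\subseteq T\G_S$. The null bundle $K\cong\gg_S$ then pairs non-degenerately, via $\Omega$, with the rank-$\dim\gg_x$ normal bundle $\nu(\G_S)\cong\gg_S^*$, reproducing the canonical pairing $\gg_x^*\times\gg_x\to\R$ that underlies $\nu(S)\cong\gg_S^*$. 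Relative to any complement, $\Omega|_g$ at $g\in\G_S$ splits into a tangential part (equal to $\Omega_S$, hence already agreeing with the tangential part of $\Omega^\theta_\lin$), a coupling part, and a normal-normal part.

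The three choices then address these pieces in turn. First, I would choose $\eta$ so that its orthogonal normal complement to $\G_S$ is $\Omega$-isotropic along $\G_S$; this is possible by elementary linear symplectic algebra precisely because $\G_S$ is coisotropic (an isotropic complement always exists, e.g. a symplectic partner of $K$), and it forces the normal-normal part of $\Omega'$ to vanish along the zero section, matching the absence of a $\d\xi\wedge\d\xi$ term in $\Omega^\theta_\lin$ (which on the zero section reduces to the tangential $t^*\omega_S-s^*\omega_S$ plus the coupling $(\mathrm{pr}_2^*\theta-\mathrm{pr}_1^*\theta)\wedge\d\xi$). The $K$-component of this coupling is canonical, being fixed by the Maurer--Cartan behaviour of the connection on isotropy directions, and matches the canonical $\nu(\G_S)$--$K$ pairing of $\Omega'$. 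Second, I would choose the principal connection $\theta$ so that its horizontal part realizes the residual coupling of $\Omega'$ between the normal directions and the reduced symplectic base directions $T\G_S/K$: this is exactly the freedom encoded by $\theta$ in the model. Finally, the only remaining discrepancy is a bundle automorphism of $\nu(\G_S)\cong\gg_S^*$ preserving the pairing with $K$, which is absorbed by the VB-groupoid automorphism $\Phi$.

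The pointwise content above is routine linear symplectic algebra; the real obstacle — and where properness is essential — is to perform the three choices \emph{smoothly and compatibly over the whole orbit} $\G_S$, so that $\eta$ is a genuine $2$-metric, $\theta$ a genuine principal connection on $t\colon s^{-1}(x)\to S$, and $\Phi$ a genuine morphism of VB-groupoids, each $G_x$-equivariant and respecting multiplicativity. I expect to handle this exactly as in the averaging arguments already used for proper groupoids: the spaces of admissible $2$-metrics, of connections, and of the relevant isotropic complements are all affine (hence convex), so an arbitrary choice can be averaged over the proper groupoid $\G$ against a normalized Haar measure to produce invariant, multiplicative data without leaving the defining open conditions (isotropy of the normal complement, non-degeneracy). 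Verifying that this averaging preserves the matching along $\G_S$, rather than merely on fibers, is the step I anticipate requiring the most care.
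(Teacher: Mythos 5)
Your proposal is correct and follows essentially the same route as the paper's Appendix: establish that $\G_S$ is coisotropic with multiplicative kernel $K=\ker\d s\cap\ker\d t$, produce a multiplicative symplectic complement $E$ from a principal connection $\theta$ and a multiplicative Lagrangian complement $L$ by $G_x$-equivariant averaging, adapt the $2$-metric so that $(T\G_S)^\perp=L$, and absorb the residual discrepancy by the fiberwise symplectic-linear-algebra automorphism $\Phi=I_{(\exp_\eta)^*\Omega}\circ(I_{\Omega^\theta_\lin})^{-1}$, whose multiplicativity follows from that of the two forms. The averaging step you flag as delicate is exactly how the paper handles it (Lemmas on $2$-metrics and on the invariant Lagrangian distribution $\overline{L}$), so no essential gap remains.
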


% 
% One can further unravel the structure of $\nu(\G_{S})$
% and remark that, fixing $x\in S$, it is completely determined by
% \begin{itemize}
% \item[1)] The principal $G= \G_x$-bundle $p= t: P= s^{-1}(x)\to S$ (i.e. (\ref{choice-princ-bdle})).
% \item[2)] The representation $V= \nu_x$ of $G$ given by $g\cdot [v]= [dt(\widetilde{v})]$
% % \begin{equation} \label{norm-bdle-action}
% % g\cdot [v]= [dt(\widetilde{v})]
% % \end{equation}
% where $\widetilde{v}\in T_g\G$ is any vector with $ds(\widetilde{v})= v$.
% \end{itemize}
% More precisely, using the data from 1) and 2) one constructs the groupoid $(P\times P)\times_{G} V$ which is the product of the pair groupoid of $P$ with $V$, modulo the diagonal action of $\G$. And, furthermore, one has an isomorphism of groupoids:
%  \begin{equation}
% \label{eq:groupoid:linear:model}
% \xymatrix@R=5pt{ 
% \nu(\G_S) \ar@<0.25pc>[dd] \ar@<-0.25pc>[dd] & & (P\times P)\times_{G}V \ar@<0.25pc>[dd] \ar@<-0.25pc>[dd] \\
% & \cong & \\
% \nu(S) & & P\times_{G} V}
% \end{equation}
% This explains the relationship with the local Reeb stability from foliations or with the tube lemma from group actions; furthermore, it may also serve as starting point towards the local models of Poison manifolds around symplectic leaves (to be discussed in the next section).

Returning to the proof of Theorem \ref{Mo-We-type-thm}, after applying the symplectic version of the linearization theorem, we
may assume that $\G_0= \G_1$ is a VB groupoid $\cH$ over the groupoid $\cH_{S}$, so we find ourselves in the following setting:
\begin{itemize}
% \item[(a)] $\cH\tto M$ is a proper Lie groupoid. 
% \item[(b)] $\cH_{S}$ is the restriction to an orbit $S$ of the Lie groupoid $\cH\tto \cM$.
\item[(a)] $\cH\tto M$ is a proper Lie groupoid and $S$ is an orbit;
\item[(b)] $\Omega_0$ and $\Omega_1$ are multiplicative symplectic forms on $\cH$ such that: 
\[ {\Omega_0}|_h= {\Omega_1}|_h,\quad  \forall h\in \cH_{S};\]
\item[(c)] There is a smooth family of groupoid maps $\{h_{t}: \cH\to \cH\}_{t\in [0, 1]}$ --given by the flow of minus the Euler vector field--
such that $h_{1}= \textrm{Id}$, $h_{t}$ is the identity on $\cH_S$, and $h_0(\cH)= \cH_{S}$.
\end{itemize}

The next step is the groupoid version of the usual Moser-Weinstein path method:

\begin{lemma} Assume that we are in the situation (a)-(c) above. Then the conclusion of Theorem \ref{Mo-We-type-thm} holds: there exist open neighborhoods $U_0$ and $U_1$ of $S$ in $M$ and
a diffeomorphism of symplectic groupoids
$(\cH|_{U_0}, \Omega_0) \cong (\cH|_{U_1}, \Omega_1)$
which is the identity on $\cH_{S}$.
\end{lemma}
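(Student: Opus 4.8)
The plan is to carry out the Moser--Weinstein path method directly at the level of the groupoid $\cH$, the one novelty being that every auxiliary object must be kept \emph{multiplicative}, so that the interpolating flow consists of local groupoid automorphisms rather than mere diffeomorphisms. Set $\beta:=\Omega_1-\Omega_0$; by (b) this is a closed multiplicative $2$-form vanishing along $\cH_S$.

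The first step is to produce a multiplicative primitive of $\beta$ that vanishes along $\cH_S$. Writing $Y_t$ for the time-dependent vector field generating the scaling homotopy $h_t$ of (c) (so $\tfrac{\d}{\d t}h_t=Y_t\circ h_t$, with $Y_t$ a multiple of the Euler vector field of the VB groupoid $\cH\to\cH_S$), the usual homotopy formula yields
\[ \beta=h_1^*\beta-h_0^*\beta=\d\sigma,\qquad \sigma:=\int_0^1 h_t^*(i_{Y_t}\beta)\,\d t, \]
where $h_0^*\beta=0$ because $h_0$ factors through $\cH_S$ and $\beta|_{\cH_S}=0$; the vanishing of $\beta$ on the zero section $\cH_S$ also cancels the apparent $1/t$-singularity of $Y_t$ at $t=0$, so $\sigma$ is smooth. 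The crucial point is that each $h_t$ is a groupoid morphism and $Y_t$ is multiplicative, so $i_{Y_t}\beta$ is a multiplicative $1$-form and $h_t^*$ preserves multiplicativity; hence $\sigma$ is multiplicative. Since $Y_t$ vanishes along $\cH_S$, so does $\sigma$.

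Next I would run the deformation. Put $\Omega_t:=\Omega_0+t\beta=(1-t)\Omega_0+t\Omega_1$, a family of closed multiplicative $2$-forms with $\Omega_t|_{\cH_S}=\Omega_0|_{\cH_S}$ symplectic for all $t$. As $S$ is closed and embedded (properness in (a)) and nondegeneracy is an open condition, a tube argument gives an open neighborhood $W$ of $\cH_S$ on which $\Omega_t$ is symplectic for every $t\in[0,1]$. Define $X_t$ on $W$ by $i_{X_t}\Omega_t=-\sigma$. Because $\Omega_t$ is multiplicative and nondegenerate and $\sigma$ is multiplicative, $X_t$ is a multiplicative vector field; and because $\sigma$ vanishes along $\cH_S$, so does $X_t$. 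This last vanishing lets the standard relative argument produce a flow $\psi_t$ defined for all $t\in[0,1]$ on a smaller neighborhood, fixing $\cH_S$ pointwise and---being the flow of a multiplicative field---consisting of local groupoid automorphisms. The Moser computation
\[ \frac{\d}{\d t}\,\psi_t^*\Omega_t=\psi_t^*\big(\mathcal{L}_{X_t}\Omega_t+\beta\big)=\psi_t^*\big(\d\, i_{X_t}\Omega_t+\beta\big)=\psi_t^*(-\d\sigma+\beta)=0 \]
then gives $\psi_1^*\Omega_1=\psi_0^*\Omega_0=\Omega_0$, so $\Phi:=\psi_1$ is the desired symplectic-groupoid isomorphism $(\cH|_{U_0},\Omega_0)\to(\cH|_{U_1},\Omega_1)$, the identity on $\cH_S$, with $U_1$ the image of $U_0$ under the base diffeomorphism induced by $\psi_1$.

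I expect the only genuine difficulty to be the systematic bookkeeping of multiplicativity: checking that the contraction of a multiplicative form with the (multiplicative) Euler field is again multiplicative, that the fiber-integrated primitive $\sigma$ therefore inherits multiplicativity, and that $X_t$ is multiplicative so that its flow integrates to groupoid automorphisms. Each of these rests on the compatibility of interior product and pullback with the groupoid structure, but together they are precisely what upgrades the classical path method to the present groupoid setting; the remaining analytic points (the uniform symplectic neighborhood $W$ and completeness of $\psi_t$ for $t\in[0,1]$) are routine relative-Moser estimates.
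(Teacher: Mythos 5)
Your argument follows the paper's proof almost verbatim: the primitive $\sigma=\int_0^1 h_t^*(i_{Y_t}\beta)\,\d t$ is exactly the paper's $\alpha$ (up to sign), obtained from the same homotopy operator attached to the Euler scaling $h_t$, its multiplicativity is checked the same way (each $h_t$ is a groupoid morphism and $\beta$ is multiplicative), and the Moser flow of the multiplicative field $X_t$ is the same deformation. So the core of the proposal is correct and is the paper's argument.

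There is, however, one step you dismiss as ``routine relative-Moser estimates'' which is precisely where the groupoid structure and properness must be used, and which your write-up does not actually establish. The statement asks for an isomorphism between the \emph{restrictions} $\cH|_{U_0}$ and $\cH|_{U_1}$ over open neighborhoods of $S$ in the base $M$. A tube argument in the total space only gives you an open neighborhood $W$ of $\cH_S$ in $\cH$ on which $\Omega_t$ is nondegenerate and on which the flow $\psi_t$ exists up to time $1$; such a $W$ is in general not of the form $\cH|_U$, hence not a subgroupoid, and $\psi_1$ restricted to it is not an isomorphism of symplectic groupoids. The paper closes this gap with two facts that are special to the multiplicative setting: first, a multiplicative $2$-form on a Lie groupoid is nondegenerate everywhere over $U$ as soon as it is nondegenerate at the units over $U$ (see \cite[Lemma 4.2]{BCWZ}), so nondegeneracy of $\Omega_t$ propagates from a base neighborhood $U$ of $S$ to all of $\cH|_U$; second, since $X_t$ is multiplicative and $\cH$ is \emph{proper}, its flow on $\cH$ is defined for as long as the flow of the projected base field $V_t$ is defined, so completeness up to time $1$ on $\cH|_U$ follows from the (easy) completeness of $V_t$ near its zero set $S$. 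Note that this is the only place in the lemma where hypothesis (a) (properness) is actually invoked; your proof never uses it in a substantive way, which is a sign that the domain issue has been skipped. With these two observations inserted, your argument is complete and coincides with the paper's.
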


\begin{proof} We consider the groupoid version of the standard Moser lemma. The main steps of the standard argument are:
\begin{enumerate}[(i)]
\item Choosing a 1-form $\alpha$ such that $\Omega_0- \Omega_{1}= \d\alpha$ and $\alpha|_{\cH_S}=0$
(possible in a neighborhood of $\cH_S$ by (b)).
\item Considering the family of 2-forms:
\[ \Omega_t:=t\Omega_1+(1-t)\Omega_0, \quad t\in [0, 1],\]
(symplectic in a neighborhood of $\cH_S$ by (b)).
\item Considering the time-dependent vector field $X_t$ defined by the equations: 
\[ i_{X_t}\Omega_t=\alpha .\]
\end{enumerate}
The vector field $X_t$ vanishes in $\cH_S$. Hence, the flow $\phi_t$ of $X_t$ is defined up to time 1 in some neighborhood $\widetilde{U}$ of $\cH_S$,
it is the identity on $\cH_S$, and $\Omega_0= \phi_{1}^{*}\Omega_1$ in $\widetilde{U}$. Assuming for the moment that $\alpha$ can be arranged 
to be multiplicative, we can ensure that:
\begin{itemize}
\item The vector fields $X_t$ are multiplicative (since $\Omega_{t}$ and $\alpha$ are multiplicative).
In particular, $X_t$ is s-projectable and t-projectable to the same vector field $V_t$ on $M$.
\item $\widetilde{U}= \cH_{U}$ for some neighborhood $U$ of $S$ in $M$. This follows from two facts: that
a multiplicative form on a groupoid is everywhere non-degenerate iff it is non-degenerate at units (e.g.,
see \cite[Lemma 4.2]{BCWZ}) and that, since $X_t$ is multiplicative and $\cH$ is proper,
the flow of $X_t$ is defined for as long as the flow of $V_t$ is defined. Hence one just chooses $U$ accordingly 
(which is possible because $V_t$ is zero on $S$).
\item Since $X_t$ is multiplicative, its flow $\phi_t$ is by groupoid automorphisms.
\end{itemize}

Therefore, we are left with proving the existence of a multiplicative 1-form $\alpha$ as above, which is the content of the next lemma.
Note that it is there (and only there) that condition (c) is needed. 
\end{proof}

\begin{lemma} Assume that (a)-(c) hold (but properness is not needed here).
Then there exists a multiplicative 1-form $\alpha$ on $\cH$ such that:
\[ \Omega_0- \Omega_{1}= \d\alpha,\quad \alpha|_{\cH_S}= 0.\]
\end{lemma}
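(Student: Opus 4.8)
The plan is to build $\alpha$ by a relative version of the Moser homotopy operator, using the scaling homotopy $\{h_t\}$ supplied by condition (c), and then to exploit the fact that these $h_t$ are \emph{groupoid} morphisms in order to force $\alpha$ to be multiplicative. Exactness and the vanishing along $\cH_S$ will be essentially formal consequences of the homotopy formula; the real content is multiplicativity, which is where the VB-groupoid structure enters.

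First I would set $\beta:=\Omega_0-\Omega_1$, which is closed (both $\Omega_i$ are symplectic) and, by condition (b), vanishes at every point of $\cH_S$. Viewing $h_t$ as the fiberwise scaling $v\mapsto t\cdot v$ of the VB groupoid $\cH$, let $H\colon[0,1]\times\cH\to\cH$, $H(t,v)=h_t(v)$, and define
\[ \alpha:=\mathcal{I}\beta,\qquad \mathcal{I}\omega:=\int_{[0,1]} i_{\partial_t}\bigl(H^*\omega\bigr), \]
where $\int_{[0,1]}$ is integration over the $[0,1]$-fiber; as $H$ is smooth on all of $[0,1]\times\cH$, $\alpha$ is a smooth $1$-form. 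The homotopy formula $h_1^*\beta-h_0^*\beta=\d\alpha+\mathcal{I}(\d\beta)$ collapses to $\beta=\d\alpha$, since $\d\beta=0$, since $h_1=\mathrm{Id}$ gives $h_1^*\beta=\beta$, and since $h_0(\cH)\subset\cH_S$, where $\beta\equiv0$, forces $h_0^*\beta=0$. Finally $h_t|_{\cH_S}=\mathrm{Id}$ for all $t$, so the integrand defining $\alpha$ vanishes along $\cH_S$ and hence $\alpha|_{\cH_S}=0$. This disposes of the two easy requirements, using (b) and the retraction property in (c).

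Next I would prove multiplicativity, i.e. $\partial\alpha:=\pr_1^*\alpha-m^*\alpha+\pr_2^*\alpha=0$ on the space of composable pairs $\cH^{(2)}$. Because $\cH$ is a VB groupoid, each face map $f\in\{\pr_1,m,\pr_2\}\colon\cH^{(2)}\to\cH$ is linear for the scalings: if $H^{(2)}$ denotes the diagonal scaling of $\cH^{(2)}$ and $\mathcal{I}^{(2)}$ the associated homotopy operator, then $f\circ H^{(2)}_t=h_t\circ f$, the only nonobvious case being $m(t g_1,t g_2)=t\,m(g_1,g_2)$, which is precisely the homogeneity of multiplication in a VB groupoid. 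Since the resulting squares are cartesian, fiber integration commutes with base change, and $\partial_t$ is preserved by $\mathrm{id}\times f$, one obtains $f^*\circ\mathcal{I}=\mathcal{I}^{(2)}\circ f^*$ for each $f$, and hence $\partial\circ\mathcal{I}=\mathcal{I}^{(2)}\circ\partial$. As $\beta$ is a difference of multiplicative forms, $\partial\beta=0$, so
\[ \partial\alpha=\partial\,\mathcal{I}\beta=\mathcal{I}^{(2)}(\partial\beta)=0, \]
i.e. $\alpha$ is multiplicative, as required.

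The main obstacle I anticipate is the bookkeeping in the multiplicativity step: one must set up the scaling $H^{(2)}$ on $\cH^{(2)}$ and the naturality of fiber integration with enough care to justify $f^*\mathcal{I}=\mathcal{I}^{(2)}f^*$, and in particular confirm that multiplication in the VB groupoid $\cH$ is genuinely homogeneous of degree one. Once this compatibility between the scaling and the groupoid structure is in place, the intertwining $\partial\mathcal{I}=\mathcal{I}^{(2)}\partial$ is automatic and multiplicativity follows; exactness and the boundary condition $\alpha|_{\cH_S}=0$ are then immediate from the homotopy formula. Note that, as the statement emphasizes, properness plays no role here — it is precisely condition (c), the existence of the groupoid-compatible scaling retraction, that does all the work.
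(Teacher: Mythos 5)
Your proposal is correct and follows essentially the same route as the paper: the paper defines $\alpha(X_g)=\int_0^1\Omega(\tfrac{\d}{\d t}h_t(g),\d_gh_t(X_g))\,\d t$, which is exactly your fiber-integrated homotopy operator $\mathcal{I}\beta$, and then verifies multiplicativity by the same mechanism — that each $h_t$ is a groupoid morphism, so its differential intertwines the multiplication maps. The only difference is presentational: you phrase the multiplicativity check as naturality of $\mathcal{I}$ with respect to the simplicial face maps ($\partial\circ\mathcal{I}=\mathcal{I}^{(2)}\circ\partial$), whereas the paper writes out the two pointwise identities for $\d h_t\circ\d m$ and $\tfrac{\d}{\d t}h_t(gh)$ and concludes directly.
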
 

\begin{proof} Again, we have to make sure that the standard argument (basically the Poincar\'e Lemma)
preserves multiplicativity.  Since $\Omega:= \Omega_0- \Omega_1$ is closed and zero on $\cH_S\subset \cH$,
the standard argument gives us an explicit $\alpha\in \Omega^1(\cH)$ satisfying the equations in the statement, namely:
\[ \alpha(X_g)= \int_{0}^{1} \Omega( \frac{\d}{\d t} h_t(g), \d_g h_t(X_g) ) \d t .\]
Moreover, since each $h_t$ is a groupoid homomorphisms it follows that
\[ (\d_{gh} h_t)\circ(\d_{(g,h)} m) (X_g, X_h)= (\d_{(h_t(g),h_t(h))} m) (\d_g h_t(X_g), \d_h h_t(X_h)),\]
and
\[ \frac{\d}{\d t} h_t(gh)= \d_{(h_t(g),h_t(h))}m(\frac{\d}{\d t} h_t(g), \frac{\d}{\d t} h_t(h)),\]
where $m$ is the multiplication map of $\cH$ and $(X_g, X_h)$ is any vector tangent to the domain of $m$. Using 
also the multiplicativity of $\Omega$, we immediately deduce that $\alpha$ is, indeed, multiplicative.
\end{proof}

\appendix
\section{Linearization of proper symplectic groupoids}

In this appendix we give a proof  of the following symplectic version of the
Zung-Weinstein linearization theorem, which is needed for the proof of the symplectic
groupoid neighborhood equivalence theorem (Theorem \ref{Mo-We-type-thm}):

\begin{proposition}
\label{prop:symplectic:linearz}
Let $S$ be an orbit of a proper symplectic groupoid $(\G,\Omega)\tto M$. 
Then one can choose a 2-metric $\eta$ on $\G$, a principal  connection $\theta$ on $t:s^{-1}(x)\to S$,
and a groupoid automorphism  $\Phi:\nu(\G_S)\to \nu(\G_S)$, such that the composition:
\[ \exp_\eta\circ\,\Phi: \nu(\G_S)|_V\to \G|_U, \]
is a groupoid isomorphism and satisfies:
\[ ((\exp_\eta\circ\,\Phi)^*\Omega)|_g=\Omega^\theta_\lin|_g,\quad \forall g\in \G_S. \]
\end{proposition}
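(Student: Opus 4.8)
The plan is to first linearize the groupoid by a metric and then match the transverse part of the symplectic form using the two parameters at our disposal, namely the connection $\theta$ and the vector bundle groupoid automorphism $\Phi$. First I would invoke the metric linearization of \cite{HoFe}: choose a $2$-metric $\eta$ on $\G$ whose exponential map $\exp_\eta:\nu(\G_S)|_V\to \G|_U$ is an isomorphism of Lie groupoids restricting to the identity on $\G_S$ (this is the realization of Theorem \ref{thm:linearization:groupoids}). Pulling back $\Omega$ along this map produces a multiplicative symplectic form $\widetilde\Omega:=\exp_\eta^*\Omega$ on the vector bundle groupoid $\nu(\G_S)|_V$, and the statement to be proved reduces to finding a connection $\theta$ on $t:s^{-1}(x)\to S$ and a VB-groupoid automorphism $\Phi$ of $\nu(\G_S)$ such that $\Phi^*\widetilde\Omega$ and $\Omega^\theta_\lin$ agree at every point of the zero section $\G_S$.

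Next I would compare the two forms along $\G_S$ using the splitting $T\nu(\G_S)|_{\G_S}=T\G_S\oplus \nu(\G_S)$ into directions tangent to the orbit and normal (fiber) directions. Since $\exp_\eta$ is the identity on $\G_S$ and $S$ is the symplectic leaf of the linear model with form $\omega_S$, both $\widetilde\Omega$ and $\Omega^\theta_\lin$ restrict on $T\G_S$ to the multiplicative form $\Omega_S=t^*\omega_S-s^*\omega_S$; thus the horizontal block already matches. What remains is the coupling block, pairing $T\G_S$ with the normal fibers, and the pure fiber block. Writing everything $G_x$-equivariantly on $P\times P\times\gg^*$ with $P=s^{-1}(x)$, the normal fibers are modeled on $\gg_x^*$ and carry the linearized (coadjoint-type) normal representation, so these two remaining blocks are exactly the transverse data of a multiplicative symplectic form extending $\Omega_S$.

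Finally I would carry out the transverse matching. The difference $\widetilde\Omega-\Omega^\theta_\lin$ restricted to $\G_S$ is a multiplicative form supported on the coupling and fiber blocks, and I would bring it to zero using the two available symmetries: the VB-groupoid automorphism $\Phi$, being fiberwise linear and $G_x$-equivariant, reparametrizes the normal fibers and hence acts on the fiber and coupling blocks, while the choice of connection $\theta$ fixes the horizontal lift of the model and thus its coupling term $-\d\langle\theta,\cdot\rangle$. The main obstacle is precisely this step: establishing a linear normal form for multiplicative symplectic forms on $\nu(\G_S)$ over the fixed leaf form $\omega_S$, i.e. that every such form is carried to $\Omega^\theta_\lin$ by some pair $(\theta,\Phi)$. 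I expect to resolve it by a cohomological argument exploiting that $\G_S$ is transitive and proper: averaging over the compact isotropy $G_x$, equivalently over the Haar system of $\G_S$, trivializes the relevant obstruction classes, so the coupling block can be absorbed into a connection and the fiber block normalized. Non-degeneracy of $\Omega^\theta_\lin$ on $\G_S$ is then inherited from that of $\widetilde\Omega$, and $\exp_\eta\circ\Phi$ is a groupoid isomorphism as a composite of groupoid isomorphisms, which completes the argument.
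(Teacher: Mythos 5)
Your overall strategy (linearize with a $2$-metric, then match the transverse part of $\Omega$ along $\G_S$ using $\theta$ and $\Phi$) is the right shape, and your observation that the tangential block already agrees with $\Omega_S=t^*\omega_S-s^*\omega_S$ is correct. But the argument has a genuine gap exactly where you flag it: the ``linear normal form for multiplicative symplectic forms on $\nu(\G_S)$'' is the entire content of the proposition, and replacing it by ``a cohomological argument exploiting that $\G_S$ is transitive and proper'' is not a proof. Concretely, two things are missing. First, once the metric is fixed \emph{before} looking at $\Omega$, the image $\d\exp_\eta(\nu(\G_S))=(T\G_S)^{\perp_\eta}$ is an arbitrary multiplicative complement with no compatibility with the symplectic form: the pulled-back form has nonzero $\nu$--$E$ and $\nu$--$\nu$ blocks, and a fiberwise linear $\Phi$ acting only on $\nu(\G_S)$ must simultaneously kill both while normalizing the $\nu$--$K$ pairing. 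This is possible pointwise by symplectic linear algebra, but the resulting pointwise correction must be shown to be a \emph{groupoid} automorphism, and nothing in your setup guarantees that, since the reference complement $(T\G_S)^{\perp_\eta}$ was chosen blind to $\Omega$. Second, the existence of the multiplicative Lagrangian complement that makes the linear algebra work is itself a nontrivial averaging construction that you do not carry out.

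The paper resolves both points by reversing your order of operations. It first constructs \emph{multiplicative} distributions adapted to $\Omega$: the horizontal space $E$ of a principal connection $\theta$ on $t:s^{-1}(x)\to S$ (which is automatically multiplicative in the model $(s^{-1}(x)\times s^{-1}(x)\times\nu_x(S))/G_x$), and a multiplicative \emph{Lagrangian} complement $L$ with $E^{\perp_\Omega}=K\oplus L$, obtained by averaging a $G_x$-invariant distribution on $s^{-1}(x)\times\nu_x(S)$. Only then does it choose the $2$-metric, adapted so that $(T\G_S)^{\perp_\eta}=L$ (this requires checking that the averaging procedure of \cite{HoFe} preserves a prescribed multiplicative orthogonal complement). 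With $\exp_\eta$ now identifying $\nu(\G_S)$ with the Lagrangian $L$, both $(\exp_\eta)^*\Omega$ and $\Omega^\theta_\lin$ have $E$ symplectic and $K$, $\nu(\G_S)$ Lagrangian, and the matching automorphism is the explicit $\Phi=I_{(\exp_\eta)^*\Omega}\circ(I_{\Omega^\theta_\lin})^{-1}$ from the coisotropic linear-algebra lemma; its multiplicativity is then immediate from that of the two forms, with no cohomological argument needed. If you want to salvage your version, you would have to prove that your pointwise correction is multiplicative, which in effect forces you back to constructing $L$ and adapting the metric to it first.
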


We will not go into details on 2-metrics on a groupoid $\G\tto M$. For our purpose, it suffices to know that it amounts to a metric $\eta^{(0)}$ on the objects $M$,
a metric $\eta^{(1)}$ on the arrows $\G$ and a metric $\eta^{(2)}$ on the composable arrows $\G^{(2)}$ 
which are compatible with all structure maps. The details can be found in \cite{HoFe}. 
We will focus our attention on the metric on the arrows $\eta^{(1)}$, which we write simply as $\eta$.
As we mentioned above, the main property of a 2-metric is that the exponential map $\exp:\nu(G_S)\to \G$ is a groupoid morphism.

Recall that a {\bf multiplicative distribution} in a groupoid $\G\tto M$ is, by definition, a subgroupoid $\cD\tto D$ of the tangent groupoid $T\G\tto TM$. 
We start by remarking that for a proper groupoid, if one is given a multiplicative distribution $\cD\tto D$ in $T_{\G_S}\G\tto T_SM$ complementary to $T\G_S\tto TS$, 
then we can choose a 2-metric which is adapted to the decomposition $T_{\G_S} \G=T\G_{S}\oplus \cD$.

\begin{lemma}
\label{lemma:A:1}
If $S$ is an orbit of a proper Lie groupoid $(\G,\Omega)\tto M$ 
and $\cD\tto D$ in $T_{\G_S}\G\tto T_SM$ is a multiplicative distribution  complementary to $T\G_S\tto TS$, then we can choose a 2-metric in $\G$ such that:
\[ \cD=(T\G_S)^\perp,\quad D=(TS)^\perp. \]
\end{lemma}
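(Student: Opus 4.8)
The plan is to produce $\eta$ by an equivariant averaging argument that exploits the properness of $\G$. The starting observation is that all the sub-bundles at play are invariant under the groupoid: $T\G_S\tto TS$ is the tangent groupoid of the (full) subgroupoid $\G_S\tto S$ over the orbit $S$, while $\cD\tto D$ is multiplicative by hypothesis; in particular $\d s,\d t$ map $\cD$ onto $D$ and $T\G_S$ onto $TS$. Recall also the elementary fact that, for \emph{any} 2-metric, the orthogonal complement of an invariant sub-bundle is again invariant (the same linear algebra that makes the orthogonal complement of an invariant subspace under a $G$-invariant inner product invariant). Thus every 2-metric on $\G$ automatically yields \emph{some} multiplicative complement $(T\G_S)^{\perp}\tto(TS)^{\perp}$ of $T\G_S\tto TS$; the content of the lemma is that this complement can be prescribed to be the given $\cD\tto D$.

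First I would reduce everything to a single orthogonality condition along the arrows. For the unit inclusion $u\colon M\to\G$ one has $\d u(D)\subseteq\cD$ and $\d u(TS)\subseteq T\G_S$, and in any 2-metric $u$ is an isometric embedding with $\eta^{(0)}=u^{*}\eta$. Hence, once a 2-metric has been found with $\cD=(T\G_S)^{\perp}$ along $\G_S$, the object-level identity $D=(TS)^{\perp}$ follows automatically by pulling back along $u$. So it is enough to exhibit a 2-metric $\eta$ realizing $\cD$ as $(T\G_S)^{\perp}$. To set this up I would choose any metric $\eta_0$ on $\G$ whose restriction to $T_{\G_S}\G$ makes $T_{\G_S}\G=T\G_S\oplus\cD$ an orthogonal splitting, extending it arbitrarily away from $\G_S$; this $\eta_0$ need not yet be a 2-metric.

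Second I would upgrade $\eta_0$ to a 2-metric using the averaging construction of \cite{HoFe}, which turns an arbitrary metric on a proper Lie groupoid into a 2-metric. The decisive feature is that this construction is assembled entirely from the proper groupoid action — normalized cutoffs along the source fibres together with the transport maps coming from multiplication — and is therefore natural with respect to that action: it sends invariant input data to invariant output data and, in particular, preserves orthogonality between invariant sub-bundles. Since $T\G_S$ and $\cD$ are invariant and $\eta_0$ makes them orthogonal along $\G_S$, the averaged 2-metric $\eta$ still makes them orthogonal there, giving $\cD=(T\G_S)^{\perp}$ and hence the lemma.

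The step I expect to be the main obstacle is precisely this equivariance claim: checking that averaging does not rotate $\cD$ off the orthogonal complement of $T\G_S$. Concretely this amounts to verifying that the transport maps used in the averaging of \cite{HoFe} preserve the invariant sub-bundles $T\G_S$ and $\cD$, so that the whole procedure respects the invariant orthogonal decomposition $T_{\G_S}\G=T\G_S\oplus\cD$. If one prefers to avoid re-examining the averaging, the same point can be organized structurally: start from any 2-metric $\eta_1$ furnished by \cite{HoFe}, with complement $\cD_0=(T\G_S)^{\perp_{\eta_1}}$, write $\cD$ as the graph of a VB-groupoid morphism $\cD_0\to T\G_S$, use the induced VB-groupoid automorphism of $T_{\G_S}\G$ (the identity on $T\G_S$, carrying $\cD_0$ to $\cD$) to transport the metric along $\G_S$, and then extend this invariant boundary metric to a global 2-metric by the same relative averaging.
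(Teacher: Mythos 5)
Your overall strategy is the paper's: run the del Hoyo--Fernandes averaging on a metric already adapted to the splitting $T_{\G_S}\G=T\G_S\oplus\cD$ and argue that the averaging does not destroy the orthogonality. But the step you yourself flag as ``the main obstacle'' is exactly where the proof lives, and you leave it open; worse, the naturality slogan you lean on (``the construction sends invariant input to invariant output'') is false as stated. The transport maps in the averaging of \cite{HoFe} are not canonical: to lift the right $\G$-action on the nerve to tangent spaces one must first fix an Ehresmann connection for $s$, i.e.\ a splitting $\sigma\colon s^*TM\to T\G$, and the lifted quasi-action of an arrow $h$ sends $v\in T_g\G$ to $\d m(v,\tilde v)$, where $\tilde v\in T_h\G$ is the $\sigma$-lift of the relevant base vector. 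Multiplicativity of $\cD$ only gives $\d m(\cD_g,\cD_h)\subset\cD_{gh}$, so the lifted action preserves $\cD$ only if $\sigma$ produces lifts lying in $\cD$, i.e.\ only if $\sigma(s^*D)\subset\cD$ and $\sigma(s^*TS)\subset T\G_S$ along $\G_S$. For a generic connection this fails, and the averaged metric will rotate $(T\G_S)^{\perp}$ off $\cD$. The missing idea is precisely the paper's: along $\G_S$ build the connection from an Ehresmann connection $H$ for $s|_{\G_S}$ together with $\cD$ itself (so that $H\oplus\cD$ splits $s$ over $S$), extend globally, and start from a metric making $s$ a Riemannian submersion with $\cD=(T\G_S)^{\perp}$ and $D=(TS)^{\perp}$ along $S$. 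Only with these adapted choices does the averaging demonstrably preserve the decomposition.

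Your fallback route (realize $\cD$ as the graph of a VB-groupoid morphism $\cD_0\to T\G_S$, where $\cD_0=(T\G_S)^{\perp_{\eta_1}}$ for an auxiliary 2-metric $\eta_1$, transport the metric, then ``extend by relative averaging'') has the same hole in a different place: the averaging of \cite{HoFe} is not relative, and forcing it to fix a prescribed metric on $T_{\G_S}\G$ requires exactly the adapted connection above; moreover the claim that $(T\G_S)^{\perp}$ of an arbitrary 2-metric is automatically multiplicative is itself a nontrivial assertion you would need to prove. Your reduction of the base condition $D=(TS)^{\perp}$ to the arrow-level one via the unit embedding is plausible for 2-metrics, but the paper obtains it more directly by imposing $D=(TS)^{\perp}$ on the initial Riemannian-submersion metric before averaging.
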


\begin{proof}
The existence of a 2-metric on a proper groupoid is proved in \cite{HoFe} by an averaging procedure. 
Let us indicate how choices should be made so that the average procedure yields a 2-metric with the desired property.

First, one needs to fix an Ehresmann connection for the source fibration, i.e.,  to choose a splitting of the short exact sequence:
\[
\xymatrix{\ 0\ \ar[r] &\ t^* A \ \ar[r] &\  T\G \ \ar[r]^{s_*} & \ s^*TM \ \ar@/^/[l]^{\sigma} \ar[r]& \ 0\ }.
\]
Such a choice allows one to lift any groupoid action $\theta:\G\action E$ to a quasi action $T_\sigma\theta:\G\action TE$. 
We choose a splitting $\sigma:s^*TM\to T\G$ as follows: first, we restrict our attention to the subgroupoid $\G_S$, and choose and Ehresmann connection $H$
for the source map. Then we observe that $H\oplus \cD$ yields and Ehresmann connection for the source map over the points of $S$.
Such connection over
$S$ 
always extends to a global one. Its corresponding splitting preserves decompositions along $\G_S$:
\[ \quad \sigma(s^*TS)\subset T\G_S,\quad \sigma(s^*D)\subset \cD. \]

Second, we need to choose a metric $\eta$ on $\G$ for which the source map $s:\G\to M$ becomes a Riemannian submersion. We choose $\eta$ so that along $S$ we have additionally:
\[ \cD=(T\G_S)^\perp,\quad D=(TS)^\perp. \]
Again, this is possible because $\d s(T\G_S)=TS$ and $\d s(\cD)=D$. 

Then, we proceed as in \cite{HoFe}. On the fiber product:
\[ \G^{[n]}:=\{(g_1,\dots,g_{n+1}): s(g_1)=\cdots=s(g_n)\}, \]
we consider the restriction of the product metric $\eta\oplus\cdots\oplus\eta$. The groupoid $\G$ acts (on the right) on $\G^{[n]}$ on a proper
and free fashion and the quotient is $\G^{(n)}$. If we average the product metric on $\G^{[n]}$, using the lifted tangent action, 
we obtain a metric that descends to the quotient $\G^{(n)}$. The resulting metrics on $M=\G^{(0)}$, $\G=\G^{(1)}$ and $\G^{(2)}$ give a 2-metric.
The important remark is that because $\cD$  and $TG_S$ are multiplicative distributions, averaging does not alter 
the orthogonal decomposition:
\[ \cD=(T\G_S)^\perp,\quad D=(TS)^\perp. \]
\end{proof}

Let $(\G,\Omega)\tto M$ be a proper symplectic Lie groupoid with orbit $S$.  Note that $\G_S$ is a coisotropic submanifold and that the restriction of the symplectic form to $\G_S$ has kernel:
\[ K=\Ker(\Omega|_{\G_S})=\ker(\d s|_{T\G_S})\cap\ker(\d t|_{T\G_S}). \]
This is a multiplicative distribution. We will need a version of the standard coisotropic neighborhood theorem which is valid in our multiplicative setting. 

Let us start by recalling first the symplectic linear algebra statement, which we adapt here to the case of symplectic vector bundles:

\begin{lemma}
\label{lem:A:2}
Let $(V\to Q,\Omega)$  be a symplectic vector bundle and assume that there are sub-bundles $C,E,L\subset V$ such that:   
\begin{enumerate}[(i)]
\item $C\to Q$ a coisotropic sub-bundle;
\item $C=K\oplus E$, where $K=\Ker(\Omega|_C)$;
\item $E^{\perp_{\Omega}}=K\oplus L$, where  $L\subset V$ is a Lagrangian sub-bundle.
 \end{enumerate}
Then there is a canonical isomorphism of symplectic vector bundles 
\[ A: (V,\Omega)\cong (E\oplus K\oplus K^*, \Omega|_E+\Omega_\can),\] 
where $\Omega_\can$ denotes the standard symplectic form on $K\oplus K^*$.
\end{lemma}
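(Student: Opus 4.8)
The plan is to reduce the statement to fiberwise symplectic linear algebra, observing that all the sub-bundles and operations involved vary smoothly, so that a canonical fiberwise isomorphism automatically yields a smooth isomorphism of vector bundles over $Q$. Thus I work in a single fiber $(V,\Omega)$ and construct $A$ naturally out of the data $(C,E,L)$.

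First I would identify the kernel $K$. Since $C$ is coisotropic we have $C^{\perp_\Omega}\subseteq C$, and therefore $K=\Ker(\Omega|_C)=C\cap C^{\perp_\Omega}=C^{\perp_\Omega}$. Next, because $E$ is a complement to $K$ in $C$, the projection $C\to C/K$ restricts to an isomorphism $E\cong C/K$ carrying $\Omega|_E$ to the non-degenerate form induced on $C/K$; hence $(E,\Omega|_E)$ is a symplectic sub-bundle and we obtain the $\Omega$-orthogonal splitting $V=E\oplus E^{\perp_\Omega}$ into symplectic sub-bundles. Since $E\subseteq C$ gives $K=C^{\perp_\Omega}\subseteq E^{\perp_\Omega}$, the hypothesis $E^{\perp_\Omega}=K\oplus L$ exhibits $K$ (isotropic, being the kernel) and $L$ (isotropic, as it is Lagrangian) as complementary sub-bundles of the symplectic bundle $(E^{\perp_\Omega},\Omega|_{E^{\perp_\Omega}})$; by a dimension count each has rank equal to half that of $E^{\perp_\Omega}$, so both are Lagrangian there and the pairing $(u,\ell)\mapsto\Omega(u,\ell)$, $u\in K$, $\ell\in L$, is fiberwise perfect.

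This perfect pairing provides the canonical isomorphism $\phi:L\to K^*$, $\phi(\ell)=\iota_\ell\,\Omega|_K=\Omega(\cdot,\ell)|_K$. I then define $A$ to be the identity on $E\oplus K$ and to equal $\phi$ on $L$, using the direct sum $V=E\oplus K\oplus L$. To verify that $A$ is symplectic I would use the $\Omega$-orthogonality of the splitting $V=E\oplus(K\oplus L)$: the $E$-component of $\Omega$ is literally $\Omega|_E$, while on $K\oplus L$ the isotropy of $K$ and $L$ reduces $\Omega$ to the cross terms $\Omega(v_K,w_L)+\Omega(v_L,w_K)$, which match $\Omega_{\can}$ under $\phi$ once the sign convention $\Omega_{\can}((u,\xi),(u',\xi'))=\xi'(u)-\xi(u')$ is fixed.

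The computation is routine, so there is no serious analytic obstacle; the only points requiring care are bookkeeping ones: checking that $\Omega|_E$ is genuinely non-degenerate (so that $E^{\perp_\Omega}$ is a complement and not merely a sub-bundle), confirming $K\subseteq E^{\perp_\Omega}$ so that the given decomposition of $E^{\perp_\Omega}$ is meaningful, and pinning down the sign in the identification $L\cong K^*$ so that the induced form is exactly $\Omega_{\can}$ and not its negative. Finally I would remark that, since $C,E,L$ are sub-bundles and orthogonal complement, restriction, and the pairing are all smooth vector-bundle operations, the fiberwise map $A$ is a smooth bundle isomorphism, and it is canonical because no auxiliary choices enter its construction.
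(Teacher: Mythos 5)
Your proof is correct and follows essentially the same route as the paper: establish that $\Omega|_E$ is nondegenerate, obtain the splitting $V=E\oplus K\oplus L$, and define $A$ as the identity on $E\oplus K$ combined with the contraction isomorphism $L\cong K^*$ (the paper uses $v\mapsto (i_v\Omega)|_K$, which is your $\phi$ up to the sign you already flag). The paper leaves the verification that $A$ is symplectic implicit; your fiberwise computation and the remark on smoothness of the bundle operations fill in exactly those details.
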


Note that $\Omega|_E$ is symplectic. Hence, one has the direct sum decomposition
\[ V= C\oplus L= E\oplus K \oplus L .\]
The bundle isomorphism $A$ is obtained by combining this direct sum decomposition with the isomorphism 
$I_{\Omega}:L\to K^*$, $v\mapsto (i_v\Omega)|_{K}$.  In other words, it is the unique isomorphism that makes the following diagram commute:
\[ \xymatrix{
C \ar[r] \ar[d]^{\textrm{id}} & V \ar[r] \ar[d]^{A} & L \ar[d]^{I_\Omega} \\
C \ar[r] & E\oplus K\oplus K^* \ar[r]  & K^*
}
\]

We now turn to multiplicative versions of these results:

\begin{lemma}
\label{lemma:A:3}
Let $(\G,\Omega)\tto M$ be a proper symplectic Lie groupoid with orbit $S$.  Then one can choose multiplicative distributions $E,L\subset T_{\G_S}\G$ such that:
\begin{enumerate}[(i)]
\item $T\G_S=K\oplus E$, where $K=\Ker(\Omega|_{G_S})$ and $E$ is a symplectic sub-bundle;
\item $E^{\perp_{\Omega}}=K\oplus L$, where  $L$ is a Lagrangian sub-bundle.
 \end{enumerate}
\end{lemma}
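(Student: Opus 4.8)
The plan is to produce the two multiplicative distributions $E$ and $L$ by averaging over the proper groupoid: at each stage the relevant space of complements will be an \emph{affine} bundle on which $\G_S$ acts by affine transformations, so a normalized Haar average of any (non-invariant) complement yields a groupoid-invariant, hence multiplicative, one. First I would record the multiplicative data already in place: as noted before the statement, $K=\Ker(\Omega|_{\G_S})=\ker(\d s|_{T\G_S})\cap\ker(\d t|_{T\G_S})$ is a multiplicative distribution inside $C:=T\G_S$, the restriction $\Omega|_{\G_S}$ is a multiplicative presymplectic form with constant-rank kernel $K$, and $C$ is coisotropic with $C^{\perp_{\Omega}}=K$. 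I would also establish the elementary but essential fact that the $\Omega$-orthogonal of a multiplicative distribution $W\subset T_{\G_S}\G$ is again multiplicative: if $v_1\in W^{\perp_{\Omega}}_{g_1}$ and $v_2\in W^{\perp_{\Omega}}_{g_2}$ are composable in $T\G$, then multiplicativity of both $\Omega$ and $W$ forces the tangent product $v_1\cdot v_2$ to annihilate $W_{g_1g_2}$, so that $W^{\perp_{\Omega}}\tto W^{\perp_{\Omega}}\cap TM$ is a subgroupoid of $T\G$.

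To construct $E$ I would split the short exact sequence of VB-groupoids $0\to K\to C\to C/K\to 0$ multiplicatively. The set of linear complements to $K$ in $C$ is an affine bundle modeled on $\operatorname{Hom}(C/K,K)$, and $\G_S$ (which is proper, being the transitive restriction of the proper groupoid $\G$ to the orbit $S$) acts on it by affine maps, once one fixes an Ehresmann connection to lift the action to the tangent bundle exactly as in the proof of Lemma \ref{lemma:A:1}. Averaging any complement against a normalized Haar system produces a $\G_S$-invariant, hence multiplicative, complement $E$. Since $E$ is complementary to the kernel $K$ of $\Omega|_C$, the form $\Omega|_E$ is automatically non-degenerate, so $E$ is a symplectic sub-bundle and (i) holds.

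For $L$ I would run the same averaging inside the symplectic orthogonal. By the orthogonality remark above, $E^{\perp_{\Omega}}$ is multiplicative; it is a symplectic sub-bundle (as $E$ is symplectic), and a dimension count gives $\dim E^{\perp_{\Omega}}=2\dim K$, with $K=C^{\perp_{\Omega}}\subset E^{\perp_{\Omega}}$ sitting as a multiplicative Lagrangian sub-bundle. The set of Lagrangian complements to the fixed Lagrangian $K$ inside the symplectic bundle $E^{\perp_{\Omega}}$ is an affine bundle modeled on $\Sym^2 K^*$, again carrying an affine $\G_S$-action, so a Haar average of any Lagrangian complement yields a multiplicative Lagrangian complement $L$ with $E^{\perp_{\Omega}}=K\oplus L$, which is exactly (ii).

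The main obstacle is the averaging step itself. One must check carefully that the space of complements (respectively of Lagrangian complements to a fixed Lagrangian) is genuinely affine, and that the lifted $\G_S$-action preserves this affine structure, so that a convex Haar average remains in the space and produces an \emph{invariant} element; this is precisely where properness is indispensable, both to guarantee a normalized Haar system (via a cutoff function) and to make the averaging well defined. This is the same analytic input used to build $2$-metrics in \cite{HoFe}, and once it is in place the two constructions above are formally parallel applications of it.
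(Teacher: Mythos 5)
Your preliminary observations are fine: $K$ is multiplicative, $C=T\G_S$ is coisotropic with $C^{\perp_\Omega}=K$, and the fact that the $\Omega$-orthogonal of a multiplicative (VB-sub)distribution is again multiplicative is correct and useful. The problem is the central step, which you use twice: the claim that a normalized Haar average of an arbitrary complement, taken in an ``affine bundle of complements over $\G_S$'', is automatically multiplicative. Multiplicativity of $E$ (or $L$) means that it is a subgroupoid of $T\G\tto TM$, i.e.\ that it is closed under the tangent multiplication $\d m$ over $\G^{(2)}$ and compatible with units and inversion. This is \emph{not} a fiberwise invariance condition on an affine bundle over the arrow space $\G_S$: the groupoid does not act on bundles over its own arrows, and the lift of translations via an Ehresmann connection (as in Lemma \ref{lemma:A:1}) only produces a quasi-action, invariance under which does not imply closure under $\d m$. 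Concretely, the failure of a chosen splitting of $0\to K\to C\to C/K\to 0$ to be multiplicative is a cocycle on $\G^{(2)}$ with values in $\Hom(C/K,K)$ (and similarly a $\Sym^2K^*$-valued cocycle for the Lagrangian complement); what properness actually buys is the vanishing of the corresponding degree-one differentiable cohomology, and turning that into a proof requires setting up VB-groupoid cohomology/representations up to homotopy, none of which you do. In the 2-metric construction of \cite{HoFe} that you invoke, the averaging takes place on the fiber products $\G^{[n]}$, where the $\G$-action is genuine, free and proper, and compatibility across the simplicial degrees is what encodes multiplicativity --- it is not an average of objects over $\G^{(1)}$ alone.

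The paper sidesteps exactly this difficulty (it even remarks that producing multiplicative distributions is hard in general). It first linearizes via Theorem \ref{thm:linearization:groupoids}, identifies $\nu(\G_S)\cong (s^{-1}(x)\times s^{-1}(x)\times \nu_x(S))/G_x$ and writes $\Omega$ as $\pr_1^*\overline{\omega}-\pr_2^*\overline{\omega}$ for a $G_x$-basic form $\overline{\omega}$ on $s^{-1}(x)\times\nu_x(S)$. Multiplicative distributions are then \emph{manufactured} from any $G_x$-invariant distribution $\overline{D}$ on $s^{-1}(x)\times\nu_x(S)$ by the doubling construction $\widetilde{D}=\{(v_1,v_2,w):(v_1,w),(v_2,w)\in \overline{D}\}$, which is a subgroupoid for pair-groupoid reasons. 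With this device, $E$ comes from the horizontal distribution of a principal connection on $t:s^{-1}(x)\to S$, and $L$ from a $G_x$-invariant isotropic complement of $\overline{K}$ in $\overline{E}^{\perp_{\overline{\omega}}}$; the only averaging needed is over the \emph{compact group} $G_x$, where invariance genuinely is the required condition. To repair your argument you should either adopt this reduction to the linear model, or else carry out the cohomological splitting argument for VB-groupoids over proper groupoids in full.
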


\begin{proof}
We can assume that the groupoid has already be linearized. Hence, we look at a multiplicative symplectic form $\Omega$ in the groupoid $\nu(G_S)\tto \nu(S)$. 

Fixing a point $x\in S$, our groupoid can also be identified with
\[ 
\nu(\G_S)\cong (s^{-1}(x)\times s^{-1}(x) \times \nu_{x}(S))/G_{x}.
\]
In order words, $\nu(\G_S)\tto \nu(S)$ is a quotient by a proper and free $G_{x}$-action by groupoid automorphisms of the pair groupoid:
\[
s^{-1}(x)\times s^{-1}(x)\times \nu_{x}(S)\tto s^{-1}(x)\times  \nu_{x}(S).
\]

One now checks easily that a multiplicative form $\Omega$ on $\nu(\G_S)\tto \nu(S)$ corresponds to a $G_{x}$-basic multiplicative closed 2-form:
\[ \widetilde{\Omega}\in\Omega^2(s^{-1}(x)\times s^{-1}(x)\times  \nu_{x}(S)),\] 
which takes the form:
\[ \widetilde{\Omega}=\mathrm{pr}_1^*\overline{\omega}-\mathrm{pr}_2^*\overline{\omega},\]
where $\overline{\omega}\in \Omega^2(s^{-1}(x)\times  \nu_{x}(S))$ is a $G_{x}$-basic form on $s^{-1}(x)\times  \nu_{x}(S)$.

In general, it is difficult to produce multiplicative distributions in a groupoid. But, in our case, it is easy to check that
any choice of a $G_{x}$-invariant distribution $\overline{D}$ in $s^{-1}(x)\times \nu_{x}(S)$ gives rise to a multiplicative $G_{x}$-invariant distribution $\widetilde{D}$ in $s^{-1}(x)\times s^{-1}(x) \times \nu_{x}(S)$, 
by setting:
\[ \widetilde{D}:=\{(v_1,v_2,w)\in T(s^{-1}(x)\times s^{-1}(x) \times \nu_{x}(S)): (v_1,w),(v_2,w)\in D\}. \]
The quotient $D:=\widetilde{D}/G$ is a multiplicative distribution on $\nu(\G_S)\tto \nu(S)$. Two instances of this are:
\begin{enumerate}[(i)]
\item The restriction of the symplectic from $\Omega$ to $\G_S$ has kernel the multiplicative distribution:
\[ K=\Ker(\Omega|_{\G_S})=\ker(\d s|_{T\G_S})\cap\ker(\d t|_{T\G_S}). \]
This multiplicative distribution corresponds to the $G_{x}$-invariant distribution in $s^{-1}(x)\cong s^{-1}(x)\times \{0\}$ defined by the tangent spaces to the $G_{x}$-orbits: $\overline{K}=\Ker{\d t}$, where $t:s^{-1}(x)\to S$.
\item For any principal bundle connection $\theta$ on $t:s^{-1}(x)\to S$, the horizontal space $\overline{E}$ of $\theta$ determines
a multiplicative distribution $E$ on $\G_S$. 
\end{enumerate}

Notice that for any choice of $\theta$ the multiplicative distribution $E$ is complementary to $K$:
\[ T{\G_S}=K\oplus E.\]
so that $E\subset T_{\G_S}\G$ is a symplectic sub-bundle. Hence, to finish the proof of the lemma, it remains to exhibit a Lagrangian multiplicative distribution $L\subset T_{\G_S}\G$ such that:
\[  E^{\perp_{\Omega}}=K\oplus L. \]
For this, we choose a $G_{x}$-invariant distribution
\[ \overline{L}\subset T_{s^{-1}(x)} (s^{-1}(x)\times \nu_{x}(S)), \]
satisfying:
\[  \overline{E}^{\perp_{\overline{\omega}}}=\overline{K}\oplus \overline{L} \]
and such that $\overline{\omega}$ vanishes along $\overline{L}$. Such a distribution can be obtained by a standard averaging argument, using the fact that $\overline{\omega}$ is  $G_{x}$-invariant and has kernel $\overline{K}$.
\end{proof}

\begin{proof}[Proof of Proposition \ref{prop:symplectic:linearz}] 

Let $(\G,\Omega)\tto M$ be a proper symplectic Lie groupoid with orbit $S$. By Lemma \ref{lemma:A:3}, we can choose a principal bundle connection $\theta$ on $t:s^{-1}(x)\to S$, so that
its horizontal space determines a multiplicative distribution $E$ on $\G_S$ complementary to $K$:
\[ T{\G_S}=K\oplus E.\]
Moreover, we can choose a Lagrangian multiplicative distribution $L\subset T_{\G_S}\G$ such that:
\[  E^{\perp_{\Omega}}=K\oplus L. \]
In particular:
\[ T_{\G_S}\G=T\G_S\oplus L. \]

By Lemma \ref{lemma:A:1}, we can choose a 2-metric $\eta$ on $\G$ such that $L=(T\G_S)^\perp$. The exponential map of this 2-metric gives a linearization map:
\[ \exp_\eta: \nu(\G_S)|_V\to \G|_U,\]
where $V$ and $U$ are neighborhoods of $\G_S$ in $\nu(\G_S)$ and $\G$, respectively. Obviously, the differential of this map at a point $g\in\G_S$ gives an isomorphism:
\[  \d_g\exp_\eta:T_g\G_S\oplus\nu_g(\G_S)\to T_g\G_S\oplus (T_g\G_S)^\perp=T_g\G_S\oplus L_g. \]

On the other hand, the connection $\theta$ determines a closed 2-form $\Omega_\lin^\theta$ on $\nu(\G_S)$ whose pullback to $\G_S$ coincides with the pullback of $\Omega$ (see Section \ref{sec:local:model}). It follows from the explicit expression of $\Omega_\lin^\theta$  that in the direct sum decomposition:
\[ T_{\G_S}\G= E\oplus K \oplus L =E\oplus K\oplus \nu(G_S), \]
$E$ is symplectic, while $K$ and $L=\nu(G_S)$ are Lagrangian sub-bundles, for both $\Omega_\lin^\theta$ and $(\exp_\eta)^*\Omega$.

Now Lemma \ref{lem:A:2} gives a vector bundle automorphism $\Phi:\nu(G_S)\to \nu(G_S)$ such that 
\[ (\Phi)^*(\exp_\eta)^*\Omega|_g=\Omega_\lin^\theta|_g,\quad \forall g\in \G_S. \]
In the notation above, $\Phi=I_{(\exp_\eta)^*\Omega}\circ (I_{\Omega_\lin^\theta})^{-1}$.
Since the forms $(\exp_\eta)^*\Omega$ and $\Omega_\lin^\theta$ are multiplicative, it follows that $\Phi:\nu(G_S)\to \nu(G_S)$ is a groupoid automorphism.

This completes the proof of Proposition \ref{prop:symplectic:linearz}. 
\end{proof}

% It is important to point out that if $\G$ is a proper Lie groupoid and $S$ is a leaf, we can always find $\cD$ a multiplicative distribution complementary
% to $T\G_S$: using the neighborhood theorem, the is transferred to the VB groupoid $\nu(\G_s)$, where we can always take as $\cD$ the normal fibers.

%
%\begin{remark} We constructed a neighborhood equivalence around $\G_S$ between a proper symplectic groupoid and the symplectic
%local linear model, for the sake of conciseness. The proof does not use any particular
%feature of the symplectic local linear model. It could have been written for two arbitrary multiplicative symplectic structures matching on $\cG_S$, and 
%Proposition \ref{prop:symplectic:linearz}  drawn as a corollary.
%\end{remark}

\end{document}